\documentclass[a4paper,leqno]{amsart}

\usepackage[english]{babel} 									
\usepackage[T1]{fontenc}							
\usepackage[utf8]{inputenx}
				
\usepackage{mathtools}								
\usepackage{empheq}									
\usepackage{enumitem}								

\usepackage{amssymb}								
\usepackage[sc]{mathpazo}							
\usepackage{mathrsfs}								

\usepackage{verbatim}                               
\usepackage{iflang}									
\usepackage{xifthen}								

\usepackage[final]{pdfpages}						
\usepackage{todonotes}								
\usepackage{aliascnt}								
\usepackage{needspace}								
\usepackage{refcount}

\usepackage{subcaption}			
\usepackage{placeins} 			
\usepackage{grffile}			
\usepackage[all]{xy}			

\usepackage{transparent}
\usepackage{color}				

\usepackage{graphicx}			

\usepackage[babel]{csquotes}							

\usepackage{nameref}									
\usepackage[colorlinks	=	true,						
          	 	linkcolor	=	red,
            	urlcolor	=	red,
            	citecolor	=	red]%
            	{hyperref}
\usepackage{bookmark}									
\usepackage{cleveref}									
\numberwithin{equation}{section}

\newtheorem{theorem}{Theorem}[section]

\newtheorem{corollary}[theorem]{Corollary}
\newtheorem{lemma}[theorem]{Lemma}
\newtheorem{proposition}[theorem]{Proposition}

\newtheorem{definition}[theorem]{Definition}

\theoremstyle{remark}
\newtheorem{remark}[theorem]{Remark}

\theoremstyle{remark}

\newcommand{\N}{\mathbb{N}}
\renewcommand{\S}{\mathbb{S}}

\newcommand{\R}{\mathbb{R}}

\newcommand{\lm}{\mathscr{L}}

\DeclareMathOperator{\diam}{diam}

\DeclareMathOperator{\LIP}{LIP}

\DeclareMathOperator{\trace}{tr}

\allowdisplaybreaks

\newcommand{\apmd}[2][]{							
	\ifthenelse{\equal{#1}{}}%
					{ \operatorname{N}_{#2}	}%
					{ \operatorname{N}_{#1,#2} 	}}

\newcommand{\aint}[2][]{
	\ifthenelse{\equal{#1}{}}%
					{%
\mathchoice%
      {\mathop{\kern 0.2em\vrule width 0.6em height 0.69678ex depth -0.58065ex
              \kern -0.8em \intop}\nolimits_{\kern -0.45em#2}^{#1}}%
      {\mathop{\kern 0.1em\vrule width 0.5em height 0.69678ex depth -0.60387ex
              \kern -0.6em \intop}\nolimits_{#2}^{#1}}%
      {\mathop{\kern 0.1em\vrule width 0.5em height 0.69678ex depth -0.60387ex
              \kern -0.6em \intop}\nolimits_{#2}^{#1}}%
      {\mathop{\kern 0.1em\vrule width 0.5em height 0.69678ex depth -0.60387ex
              \kern -0.6em \intop}\nolimits_{#2}^{#1}}}%
					{%
\mathchoice%
      {\mathop{\kern 0.2em\vrule width 0.6em height 0.69678ex depth -0.58065ex
              \kern -0.8em \intop}\nolimits_{\kern -0.45em#1}^{#2}}%
      {\mathop{\kern 0.1em\vrule width 0.5em height 0.69678ex depth -0.60387ex
              \kern -0.6em \intop}\nolimits_{#1}^{#2}}%
      {\mathop{\kern 0.1em\vrule width 0.5em height 0.69678ex depth -0.60387ex
              \kern -0.6em \intop}\nolimits_{#1}^{#2}}%
      {\mathop{\kern 0.1em\vrule width 0.5em height 0.69678ex depth -0.60387ex
              \kern -0.6em \intop}\nolimits_{#1}^{#2}}}}

\usepackage{thmtools}
\usepackage{thm-restate}

\title{Ultralimits of Sobolev maps and stability of Dehn functions}
\author{Toni Ikonen}
\address{Department of Mathematics, University of Fribourg, Chemin du Musée 23, 1700 Fribourg, Switzerland.}
\email{toni.ikonen@unifr.ch}

\author{Stefan Wenger}
\address{Department of Mathematics, University of Fribourg, Chemin du Musée 23, 1700 Fribourg, Switzerland.}
\email{stefan.wenger@unifr.ch}

\subjclass[2020]{Primary 53C23; Secondary 46E35,46E36,49Q05}
\thanks{Both authors were supported by Swiss National Science Foundation grant 212867. T.I. was also supported by the Research Council of Finland, project number 332671.}
\keywords{Sobolev maps, ultralimits, Dehn functions, stability, upper curvature bounds, Plateau problem, harmonic maps, Gromov compactness theorem}

\begin{document}

\begin{abstract}
    We show that the ultralimit of a bounded sequence of Lipschitz maps into pointed metric spaces extends naturally to $p$-bounded sequences of Sobolev maps and that this ultralimit for Sobolev maps enjoys desirable properties. We use this to prove the stability of Dehn functions under ultraconvergence of pointed length spaces, thus resolving a problem posed by several researchers in the field. As an application, we obtain a simpler proof of a recent result of Stadler--Wenger, previously proved in the locally compact case by Lytchak--Wenger, characterizing spaces of curvature bounded above by $\kappa$ via an isoperimetric inequality for curves.
\end{abstract}
\maketitle

\section{Introduction and main results}

\subsection{Background}\label{section-background}
Limits of metric spaces and mappings are important in many areas of geometry and analysis. In the case of spaces, the (pointed) Gromov--Hausdorff convergence, introduced by Gromov in \cite{gromov-1981-groups-of-polynomial-growth-and-expanding-maps}, has wide applicability for example in geometric group theory, in the theory of Ricci limit spaces, in the theory of synthetic curvature lower bounds, in Riemannian/subRiemannian and metric geometry and in geometric analysis \cite{pansu-1983-croissance-des-boules-et-geodesiques-fermees-dans-les-nilvarietes,petersen-1990-a-finiteness-theorem-for-metric-spaces,grove-petersen-wu-1990-geometric-finiteness-theorems-via-controlled-topology,cheeger-colding-1997-on-the-structure-of-spaces-with-Ricci-curvature-bounded-below-I,cheeger-colding-1997-on-the-structure-of-spaces-with-Ricci-curvature-bounded-below-II,cheeger-colding-1997-on-the-structure-of-spaces-with-Ricci-curvature-bounded-below-III,sturm-2006-on-the-geometry-of-metric-measure-spaces-I,sturm-2006-on-the-geometry-of-metric-measure-spaces-II,Gro:07,wenger-2008-gromov-hyperbolic-spaces-and-the-sharp-isoperimetric-constant,lott-villani-2009-ricci-curvature-for-metric-measure-spaces-via-optimal-transport,brue-naber-semola-2022-boundary-regularity-and-stability-for-spaces-with-ricci-bounded-below,bate-2022-characterising-rectifiable-metric-spaces-using-tangent-spaces,brue-pigati-semola-2025-topological-regularity-and-stability-of-noncollapsed-spaces-with-ricci-curvature-bounded-below}. However, the required assumptions often do not hold in the generality necessary for applications in geometric group theory, for spaces with curvature bounded above in the sense of Alexandrov, and for diameter-volume bounded sequences of closed Riemannian manifolds. A more flexible notion of convergence is given by the ultralimit $X_\omega$ of a sequence of pointed metric spaces $X_m= (X_m, d_m, p_m)$ with respect to some non-principal ultrafilter $\omega$ on $\N$. This notion, first introduced by van den Dries and Wilkie \cite{van-der-dries-wilkie-1984-gromovs-theorem-on-groups-of-polynomial-growth-and-elementary-logic},  has played a key role in geometric group theory and the theory of curvature upper bounds, in particular, in the study of asymptotic cones of groups and spaces \cite{gromov-1993-asymptotic-invariants-of-infinite-groups,kapovich-leeb-1995-on-asymptotic-cones-and-quasi-isometry-classes-of-fundamental-groups-of-3-manifolds,kleiner-leeb-1997-rigidity-of-quasi-isometries-for-symmetric-spaces-and-euclidean-buildings,kapovich-kleiner-1998-quasi-isometries-and-the-de-rham-decomposition,drutu-2002-quasi-isometry-invariants-and-asymptotic-cones,kramer-shelah-tent-thomas-2005-asymptotic-cones-of-finitely-presented-groups,drutu-sapir-2005-tree-graded-spaces-and-asymptotic-cones-of-groups}. We refer to \Cref{section-ultralimits-of-spaces} for the definitions of ultralimits.

Similar flexibility is desirable in the context of mappings $\varphi_m \colon ( X_m, d_m, p_m ) \to ( Y_m, d_m, q_m )$. This is achieved as follows in the Lipschitz setting. We say that $( \varphi_m )$ is \emph{Lipschitz bounded} if
\begin{equation}\label{equation-Lipschitz-boundedness}
    \sup_{ m \in \mathbb{N} } \left( d_m( q_m, \varphi_m(p_m) ) + \LIP( \varphi_m ) \right) < \infty,
\end{equation}
where $\LIP( \varphi )$ is the Lipschitz constant. Any such sequence has a \emph{pointwise ultralimit}
\begin{align*}
    \varphi_\omega \colon X_\omega \to Y_\omega,
    \quad\text{where $\varphi_\omega( \lim\nolimits_\omega x_m ) = \lim\nolimits_\omega \varphi_m(x_m)$,}
\end{align*}
which is again Lipschitz; see \Cref{section-ultralimits-of-mappings}. This flexibility is already useful in the case of a constant sequence $X_m = X$ in which case we obtain a pointwise ultralimit $X \to Y_\omega$ by precomposing $\varphi_\omega$ with the canonical isometric embedding of $X$ into the \emph{ultracompletion} $X_\omega$.

In many problems in calculus of variations and geometric group theory, Lip\-schitz maps form too rigid of a class. Therefore it is desirable to extend the ultralimit construction to more general classes of mappings such as Sobolev maps. In fact, there exists a robust metric valued Sobolev theory on Euclidean domains or more general metric spaces; see e.g. \cite{Kor:Sch:93,Haj:96,Res:97,Hei:Kos:Sha:Ty:01,Hei:Kos:Sha:Ty:15}. These have become prominent in various fields including analysis in metric spaces, metric geometry, and geometric group theory \cite{gromov-schoen-1992-harmonic-maps-into-singular-spaces-and-p-adic-superrigidity-for-lattices-in-groups-of-rank-one,jost-1994-equilibrium-maps-between-metric-spaces,Schoen:Wolfson:01,daskalopoulos-mese-2011-superrigidity-of-hyperbolic-buildings,Lyt:Wen:17:areamini,Lyt:Wen:17:energyarea,Raj:17,Lyt:Wen:18:intrinsic,Lyt:Wen:18:CAT,Lyt:Wen:20,daskalopoulos-mese-2021-rigidity-of-teichmuller-space,Nta:Rom:22,ntalampekos-romney-2023-polyhedral-approximation-of-metric-surfaces-and-applications-to-uniformization,meier-wenger-2025-quasiconformal-almost-parametrizations-of-metric-surfaces}.

We focus on  $W^{1,p}$-Sobolev maps $( u_m \colon \Omega \to X_m )$, where $\Omega \subset \mathbb{R}^n$ is a bounded Lipschitz domain, $p \in (1,\infty)$, and $(X_m)$ is a sequence of \emph{complete} pointed metric spaces. We say that a sequence of $W^{1,p}$-Sobolev maps $( u_m \colon \Omega \to X_m )$ is \emph{$p$-bounded} if 
\begin{equation}\label{eq:bounded-Sobolev-sequence}
    \sup_{m\in\N} \left(\int_\Omega d^p_m(p_m, u_m(z))\,dz + E_+^p(u_m)\right)<\infty,
\end{equation}
where $E_{+}^p(u)$ is the energy of $u$; we refer to \Cref{sec:sobolevmaps} for definitions and notation related to Sobolev maps.

In the present manuscript, we construct an ultralimit for every $p$-bounded sequence. We show that it has many natural properties and use it to establish a stability property for the Dehn function under ultralimits. Roughly speaking, Dehn functions measure how difficult it is to fill in closed Lipschitz curves of a given length by discs. They are closely related to the isoperimetric inequality of the ambient space and they have been widely studied in many different settings; see e.g. \cite{reshetnyak-1968-non-expansive-maps-in-a-space-of-curvature-no-greater-than-K,gromov-1993-asymptotic-invariants-of-infinite-groups,gersten-holt-riley-2003-isoperimetric-inequalities-for-nilpotent-groups,young-2013-the-dehn-function-of-slnz,Lyt:Wen:18:CAT,stadler-the-structure-of-minimal-surfaces-in-cat-0-spaces,stadler-wenger-2025-isoperimetric-inequalities-vs-upper-curvature-bounds}. 
The question on the stability of Dehn functions has only been addressed in special cases in \cite{wenger-2011-nilpotent-groups-without-exactly-polynomial-dehn-function,Wen:2019,Lyt:Wen:You:20,stadler-wenger-2025-isoperimetric-inequalities-vs-upper-curvature-bounds}. We also provide further applications.

\subsection{Ultralimits of Sobolev maps}
Our first theorem shows that we can extend the ultralimit construction to the Sobolev setting. For the statement, let $\Omega\subset\R^n$ be a bounded Lipschitz domain and $p \in (1,\infty)$. Let $\omega$ be a non-principal ultrafilter on $\N$.

\begin{theorem}\label{theorem-ultralimit-construction}
There is a unique map sending a $p$-bounded sequence $( u_m \colon \Omega \to X_m )$ to a Sobolev map $u_\omega\in W^{1,p}(\Omega, X_\omega )$ with the following properties:
 \begin{enumerate}
     \item\label{ultralimit:lipschitz-compatibility} If the sequence $(u_m)$ is Lipschitz bounded, then $u_\omega$ agrees with the pointwise ultralimit of $(u_m)$ almost everywhere.
     \item\label{ultralimit:postcomposition-compatibility} If $(\phi_m \colon X_m \to Y_m)$ is Lipschitz bounded and $\phi_\omega \colon X_\omega \to Y_\omega$ the pointwise ultralimit, then the sequence $( v_m = \phi_m \circ u_m )$ is $p$-bounded and $v_\omega=\phi_\omega\circ u_\omega$ almost everywhere.
     \item\label{ultralimit:Lp-compatibility} We have $$\|d_\omega(u_\omega, v_\omega)\|_{L^p(\Omega)}= \lim\nolimits_\omega \|d_m(u_m, v_m)\|_{L^p(\Omega)}$$ for $p$-bounded $( u_m \colon \Omega \to X_m )$ and $( v_m \colon \Omega \to X_m )$.
 \end{enumerate}
\end{theorem}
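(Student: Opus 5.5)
We construct $u_\omega$ by Lipschitz approximation, and we use property \eqref{ultralimit:Lp-compatibility} as the organizing principle for both existence and uniqueness.

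\textbf{Step 1: a metric on sequences.} On the set $\mathcal{S}$ of $p$-bounded sequences set
\[
D((u_m),(v_m)) = \lim\nolimits_\omega \|d_m(u_m,v_m)\|_{L^p(\Omega)} .
\]
This is finite by \eqref{eq:bounded-Sobolev-sequence} and the triangle inequality through $p_m$. Property \eqref{ultralimit:Lp-compatibility} says exactly that $(u_m)\mapsto u_\omega$ is an isometry from $(\mathcal{S},D)$, modulo $D$-null pairs, into $L^p(\Omega,X_\omega)$.

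\textbf{Step 2: Lipschitz approximation with uniform control.} The key input is a quantitative approximation. For a Sobolev map $u\colon\Omega\to X$ into a complete space and $\lambda>0$, one finds a $C\lambda$-Lipschitz map $u^\lambda$ that agrees with $u$ off a set of measure at most $C\lambda^{-p}\int (M g_u)^p$. Here $g_u$ is the minimal upper gradient. The construction applies a Kuratowski embedding, uses a McShane-type or Lipschitz extension on the good set of the maximal function (Hajłasz-type pointwise inequality), and then retracts or selects values in $X$. Since $X$ need not be an absolute Lipschitz retract, I would allow the approximant to take values in $\ell^\infty(X)$ or the injective hull, or use the trick of extending from a finite net of $\Omega$ with values in $X$ only at net points and piecewise-constant interpolation. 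The goal is the estimate
\[
\|d(u,u^\lambda)\|_{L^p} \le \varepsilon(\lambda)\to 0,
\]
uniformly in $m$. By equi-integrability it suffices to control $\int_{\{Mg>\lambda\}} d^p(u,u^\lambda)$ uniformly, and this is where \eqref{eq:bounded-Sobolev-sequence} must give equi-integrability of $d^p(p_m,u_m)$. Since \eqref{eq:bounded-Sobolev-sequence} bounds only the $L^p$ norm, I would first truncate: replace $u_m$ by a map agreeing with it where $d(p_m,u_m)\le R$, using Sobolev–Poincaré on the Lipschitz domain to get higher integrability $L^{p^*}$ of $d(p_m,u_m)$, which gives the needed uniformity. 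I expect this uniform approximation, which must hold for general complete targets and be stable across $m$, to be the main obstacle.

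\textbf{Step 3: definition via Lipschitz approximants.} For fixed $\lambda$ the sequence $(u_m^\lambda)$ is Lipschitz bounded; its value at some point is controlled by the $L^p$ bound. So it has a pointwise ultralimit $u^\lambda_\omega$, which is Lipschitz on $\Omega$. For Lipschitz bounded sequences, $\|d_\omega(a_\omega,b_\omega)\|_{L^p} = \lim_\omega \|d_m(a_m,b_m)\|_{L^p}$ holds by dominated convergence together with pointwise convergence of $d_m(a_m(z),b_m(z))$ along $\omega$ (Fatou for ultralimits, with uniform bounds on compact pieces). By Step 2, $(u^\lambda_\omega)_\lambda$ is Cauchy in $L^p(\Omega,X_\omega)$, and we define $u_\omega$ as its limit. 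Independence of the choice of approximants follows from the same isometry. Then $u_\omega\in W^{1,p}$ with $E^p_+(u_\omega)\le C\lim_\omega E^p_+(u_m)$. The Lipschitz constants alone are too crude here, so I would use the Reshetnyak characterization: for each $1$-Lipschitz $f$ on $X_\omega$, pass upper gradients through, or use lower semicontinuity of energy under $L^p$ convergence for the approximants, whose energy is controlled by $g_{u_m}$ on the good set.

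\textbf{Step 4: the three properties and uniqueness.} Property \eqref{ultralimit:Lp-compatibility} follows from the Lipschitz case by approximation and the triangle inequality. Property \eqref{ultralimit:lipschitz-compatibility} holds because one may take $u_m^\lambda=u_m$. Property \eqref{ultralimit:postcomposition-compatibility} holds because $\phi_m\circ u_m^\lambda$ are Lipschitz bounded approximants of $\phi_m\circ u_m$ whose ultralimit is $\phi_\omega\circ u^\lambda_\omega$, and $\phi_\omega$ is Lipschitz, so $L^p$ limits commute with it. For uniqueness, any assignment satisfying \eqref{ultralimit:lipschitz-compatibility} and \eqref{ultralimit:Lp-compatibility} is determined on Lipschitz bounded sequences. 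By \eqref{ultralimit:Lp-compatibility} it is $D$-continuous, and Lipschitz bounded sequences are $D$-dense in $\mathcal{S}$ by Step 2, so the assignment is unique.
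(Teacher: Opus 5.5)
Your architecture is essentially the paper's: Lipschitz approximants into the injective hulls $X'_m$ with a uniform $L^p$ estimate, pointwise ultralimits, a limit in $\lambda$, and (1)--(3) plus uniqueness by approximation. Your Step 2 can replace \Cref{prop:distance-admissible-sequence-limit} if you tie the truncation level $R$ to $\lambda$. Retract $X'_m$ $1$-Lipschitz onto $\overline{B}(p_m,R)$ and use the uniform $L^{p^*}$ bound on $d(p_m,u_m)$; the bad set then costs at most $(2R)^pC\lambda^{-p}$.

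\textbf{The gap: $u_\omega\in W^{1,p}$ in Step 3.} Your second route needs $\sup_\lambda E^p_+(u^\lambda_\omega)<\infty$. You only control $E^p_+(u^\lambda_m)$, and for the pointwise ultralimit you know just $\LIP(u^\lambda_\omega)\le C\lambda$, which blows up. Passing energy bounds, rather than Lipschitz bounds, through a pointwise ultralimit is exactly the lower semicontinuity along $\omega$ that has to be proved. Your first route hits the same wall, for two reasons. There is no Fatou or dominated convergence along $\omega$; your Lipschitz-case isometry holds only because equi-Lipschitz functions on $\overline{\Omega}$ converge uniformly along $\omega$. Also, a $1$-Lipschitz function on $X'_\omega$ is in general not an ultralimit of $1$-Lipschitz functions on $X'_m$.

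The paper supplies the missing idea with \Cref{thm:embedding-mapping-packages}. Along a subsequence $(m_l)$, all $X'_{m_l}$ embed isometrically into one complete space $Z$ with $\iota_l\circ u^k_{m_l}\to f^k_\infty$ uniformly and $u^k_\omega=\eta\circ f^k_\infty$. The uniform estimate then gives $\iota_l\circ u_{m_l}\to f_\infty$ in $L^p(\Omega,Z)$. Ordinary lower semicontinuity in the fixed space $Z$ gives $f_\infty\in W^{1,p}$, and $u_\omega=\eta\circ f_\infty$.

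Your first route can also be repaired directly. On the compact set $u^\lambda_\omega(\overline{\Omega})$, replace $f$ by the $1$-Lipschitz functions $f_m=\min_j\bigl(f(y^j)+d_m(\cdot,y^j_m)\bigr)$ built from a finite $\epsilon$-net $y^j=[y^j_m]$. Then take weak $L^p$ ultralimits of the pairs $\bigl(\nabla(f_m\circ u^\lambda_m),|\nabla u^\lambda_m|\bigr)$, using that $\{(v,r)\colon |v|\le r\}$ is convex and closed. Either way, this step needs an actual argument.

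\textbf{Smaller omissions.} You never show that $u_\omega$ is essentially $X_\omega$-valued. Your approximants live in $X'_m$, so a priori $u_\omega\in L^p(\Omega,X'_\omega)$, not $L^p(\Omega,X_\omega)$ as you write.
\begin{itemize}
\item The paper handles this in \Cref{lemma-image-is-in-the-correct-place}, using ultralimits of the good sets and \Cref{lem:bound-ultralimit-set}.
\item In your framework it also follows from the equi-Lipschitz functions $z\mapsto d_m(u^\lambda_m(z),X_m)\le d_m(u^\lambda_m(z),u_m(z))$. Their ultralimit dominates $d_\omega(u^\lambda_\omega(z),X_\omega)$ and has $L^p$ norm at most $\varepsilon(\lambda)$.
\end{itemize}
The same issue affects Step 4.
\begin{itemize}
\item For (2), you must first extend $\phi_m$ to Lipschitz bounded maps $X'_m\to Y'_m$, using injectivity of $Y'_m$, before composing with $u^\lambda_m$.
\item For uniqueness, your approximating sequences do not lie in $\mathcal{S}$. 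You need (2) for the inclusions $X_m\hookrightarrow X'_m$ to make the density argument apply.
\end{itemize}
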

Uniqueness in \Cref{theorem-ultralimit-construction} is understood in the sense that if another assignment $( u_m ) \mapsto \widetilde{u}_\omega$ satisfies axioms \eqref{ultralimit:lipschitz-compatibility}-\eqref{ultralimit:Lp-compatibility}, then $u_\omega = \widetilde{u}_\omega$ almost everywhere for every $p$-bounded $(u_m)$. This allows us to talk about \emph{the ultralimit (with respect to $\omega$)}, which moreover has the following desirable properties for calculus of variations.
\begin{theorem}\label{theorem-ultralimit-trace-properties}
The ultralimit from \Cref{theorem-ultralimit-construction} satisfies the following properties.
\begin{enumerate}\setcounter{enumi}{3}
    \item\label{ultralimit:lower-semicontinuity-energy} For every $p$-bounded $(u_m)$ and every lower semicontinuous energy, it holds that $$\lim\nolimits_{\omega} E^{p}( u_m ) \geq E^p( u_\omega ).$$
    \item\label{ultralimit:lower-semicontinuity-volume} For every $p$-bounded $(u_m)$ with $p\geq n$ and every lower semi-continuous volume $$\lim\nolimits_{\omega} \mathrm{Vol}(u_m) \geq \mathrm{Vol}(u_\omega).$$
    \item\label{ultralimit:comparing-traces} The traces of $p$-bounded sequences $( u_m \colon \Omega \to X_m ), ( v_m \colon \Omega \to X_m )$ satisfy 
        \begin{align*}
            \|  d_\omega( \trace( u_\omega ), \trace( v_\omega ) ) \|_{ L^{p}( \partial \Omega ) }
            =
            \lim\nolimits_{\omega} \| d_m( \trace( u_{m} ), \trace( v_{m} ) ) \|_{ L^{p}( \partial \Omega ) }.
        \end{align*}
    \item\label{ultralimit:compatibility-of-traces} If the traces $\trace(u_m) \colon \partial \Omega \to X_m$ have continuous representatives $( \gamma_m )$ that are equi-continuous and equi-bounded, then the pointwise ultralimit $\gamma_\omega$ is a continuous representative of the trace of $u_\omega$.
\end{enumerate}
\end{theorem}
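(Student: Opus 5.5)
The plan is to go through the construction behind \Cref{theorem-ultralimit-construction}. I take it to run as follows: approximate each $u_m$ by Lipschitz maps, take pointwise ultralimits of those, and pass to an $L^p$-limit. For $\varepsilon>0$, a Lipschitz truncation on the Lipschitz domain $\Omega$ produces $u_m^\varepsilon$ with $\LIP(u_m^\varepsilon)\le C(\varepsilon)$ and $u_m^\varepsilon=u_m$ outside a set $A_m^\varepsilon$ of measure at most $\varepsilon$. By the $L^p$-compatibility (3), $u_\omega$ is the $L^p$-limit of the pointwise ultralimits $u^\varepsilon_\omega$ as $\varepsilon\to0$. I would derive every item in \Cref{theorem-ultralimit-trace-properties} from this picture together with the characterization of Sobolev maps by $1$-Lipschitz postcompositions $x\mapsto d(x,\cdot)$.

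For (4) and (5), I would first reduce to statements about approximate metric derivatives, $\apmd{u}(z)$, which are seminorms. Lower semicontinuous energies and volumes are, by definition, integrals of quasiconvex (respectively, semi-elliptic) integrands of these seminorms, and they are lower semicontinuous under $L^p$-convergence into a fixed space. The key claim is then: there is a Young-measure-type limit $\nu_z$ of $\apmd{u_m}(z)$ along $\omega$ such that $\apmd{u_\omega}(z)$ is dominated by the barycenter of $\nu_z$ for a.e.\ $z$. To obtain this, I would embed everything isometrically into a single space. Take a Kuratowski-type embedding of the separable essential image of $u_\omega$ into $\ell^\infty$. Alternatively, and better, realize $X_\omega$ together with all $X_m$ inside one metric space $Z$: an ultraproduct-style gluing, or the Banach ultrapower $\ell^\infty_\omega$. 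In $Z$, the maps $u_m$ should converge to $u_\omega$ in a weak sense that suffices for the lower semicontinuity results of Lytchak--Wenger applied to $L^p$-convergent sequences. Concretely, I would first prove (4) and (5) when $(u_m)$ is Lipschitz bounded. In that case, for every finite collection of directions, difference quotients pass to the ultralimit, and convexity of the integrand gives the inequality. I would then handle general $p$-bounded sequences by applying this to truncations and letting $\varepsilon\to0$. The exceptional sets $A_m^\varepsilon$ contribute at most $\sup_m\int_{A_m^\varepsilon}(\dots)$, which needs equi-integrability. I expect this step to be the main obstacle, since $|\nabla u_m|^p$ is only bounded in $L^1$. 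I would try to circumvent it by a biting-lemma/decomposition argument: remove concentrating sets of small measure, on which the energy of $u_\omega$ is in any case small by absolute continuity.

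For (6), I would use the trace characterization for Lipschitz domains: $\trace u(x)=\lim_{r\to0}$ of averages of $u$ over cones or small balls at $x$. Quantitatively, $\|d(\trace u,\trace v)\|_{L^p(\partial\Omega)}$ is within $C\delta^{1-1/p}(E^p_+(u)^{1/p}+E^p_+(v)^{1/p})$ of an $L^p$-average of $d(u,v)$ over the collar $\{z:\operatorname{dist}(z,\partial\Omega)<\delta\}$, expressed through a bi-Lipschitz collar parametrization. Since $p$-boundedness makes the energies uniform, it suffices to establish the equality for the collar quantities. These are of the form in (3), applied to restrictions to a Lipschitz subdomain and integrated over the fibers; after slicing, Fubini, and (3) on each slice (with dominated convergence under $\lim_\omega$), the equality follows. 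Letting $\delta\to0$ finishes (6).

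For (7), I would let $\gamma_\omega$ be the pointwise ultralimit, which is continuous by equicontinuity. The idea is to extend each $\gamma_m$ to a map $w_m$ on the collar that is constant along fibers, $w_m(y,t)=\gamma_m(y)$; this is a $p$-bounded sequence with $\trace w_m=\gamma_m$. Uniformly approximating $\gamma_m$ by equi-Lipschitz maps, which exist by equicontinuity after replacing $X_m$ by an injective hull or via an $\ell^\infty$ embedding, and applying (1), gives $\trace w_\omega=\gamma_\omega$. Then (6) applied to $(u_m)$ and $(w_m)$ yields $\|d_\omega(\trace u_\omega,\gamma_\omega)\|_{L^p}=\lim_\omega\|d_m(\trace u_m,\gamma_m)\|_{L^p}=0$.
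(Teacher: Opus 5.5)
\paragraph{Gap 1: items (4)--(5).} The central gap is here. In the paper, lower semicontinuity of an energy or volume is an abstract hypothesis. It gives $\liminf_{m\to\infty}E^p(w_m)\ge E^p(w)$ only when a $p$-bounded \emph{sequence} $w_m\to w$ in $L^p$ inside one \emph{fixed} complete space. So you must produce such a sequence converging to a copy of $u_\omega$. Your sentence that in $Z$ the $u_m$ ``should converge to $u_\omega$ in a weak sense that suffices'' is exactly this missing step, and none of your candidate spaces supplies it:
\begin{itemize}
\item a Kuratowski embedding of the image of $u_\omega$ does not contain the $X_m$;
\item an ultraproduct or Banach ultrapower contains $X_\omega$, but not the $X_m$ in a way that makes $(u_m)$ converge, because $\omega$-convergence is not sequential convergence.
\end{itemize}

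Your fallback does not work either. Energies are not assumed to be integrals of functions of $\mathrm{md}_z(u)$; the Haj{\l}asz energy is not one. Even for such integrals, domination of $\mathrm{md}_z(u_\omega)$ by a barycenter of seminorms only helps if the integrand is monotone and convex on seminorms. Jacobians are not convex: $s_1(v)=|v_1|$ and $s_2(v)=|v_2|$ have $\mathbf{J}=0$, while $\tfrac12(s_1+s_2)$ is a norm with $\mathbf{J}>0$. Quasiconvexity gives no Jensen inequality for barycenters, so your argument for (5) breaks down already for Lipschitz bounded sequences. The equi-integrability problem is also left open.

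The idea you are missing is \Cref{thm:embedding-mapping-packages}. Apply it to the countably many Lipschitz bounded families $(u^k_m)_m$, $k\in\N$, with values in the injective hulls $X'_m$. It yields:
\begin{itemize}
\item a subsequence $(m_l)$, chosen inside prescribed $\omega$-large sets so that $E^p(u_{m_l})$ and $\mathrm{Vol}(u_{m_l})$ converge to their $\omega$-limits;
\item a complete space $Z$ containing the $X'_{m_l}$ isometrically, in which $u^k_{m_l}\to f^k_\infty$ uniformly;
\item an isometric embedding $\eta$ into $X'_\omega$ with $u^k_\omega=\eta\circ f^k_\infty$.
\end{itemize}
Combine this with the bound $\|d_m(u_m,u^k_m)\|_{L^p(\Omega)}\le C'2^{-k\delta}$, uniform in $m$, from \Cref{prop:distance-admissible-sequence-limit}. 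Your first paragraph also needs this bound: small measure of the exceptional sets alone does not control $\lim_\omega\|d_m(u_m,u^\varepsilon_m)\|_{L^p}$. Together these give honest convergence $u_{m_l}\to f_\infty$ in $L^p(\Omega,Z)$ with $u_\omega=\eta\circ f_\infty$ a.e. After that, (4) and (5) are just the definition, and no equi-integrability question arises (\Cref{lemma-lusin-lipschitz-admissibility}).

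\paragraph{Gap 2: item (6), and a remark on (7).} Property (3) is a single identity for $\int_\Omega d^p$ over the whole domain. It yields neither localized nor weighted collar integrals. ``(3) on each slice'' has no meaning: slices are null sets on which $u_\omega$ is not defined, and nothing is known about restrictions. Moreover, dominated convergence under $\lim_\omega$ is false in general. A pointwise $\omega$-limit of measurable functions bounded by $1$ can be non-measurable, e.g.\ $\lim_\omega\chi_{A_m}$ for the Rademacher sets $A_m\subset[0,1]$. So $\lim_\omega$ cannot be exchanged with $\int dy$.

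In the paper, (6) is immediate from the same space $Z$: for $p$-bounded sequences, $L^p$-convergence implies $L^p(\partial\Omega)$-convergence of traces (\Cref{section-trace-theory}). If you want to use only (1)--(3), a sound route exists:
\begin{itemize}
\item Let $H$ be the weak $W^{1,p}$-$\omega$-limit of $h_m=d_m(u_m,v_m)$.
\item Rellich compactness and compactness of the trace operator give $\lim_\omega\|h_m-H\|_{L^p(\Omega)}=0$ and $\lim_\omega\|\trace h_m-\trace H\|_{L^p(\partial\Omega)}=0$.
\item Identify $H=d_\omega(u_\omega,v_\omega)$ by applying (2) to the pair maps $(u_m,v_m)$ into $X_m\times X_m$, and (1) and (3) to real-valued sequences.
\end{itemize}

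Your reduction of (7) to (6) is fine, with one caveat. For merely continuous $\gamma_m$, the map $w_m(y,t)=\gamma_m(y)$ need not be Sobolev, so the equi-Lipschitz approximation must come before the extension. The paper instead adds $(\gamma_m)$ to the family in \Cref{thm:embedding-mapping-packages} and reads off $\trace f_\infty=\gamma_\infty$ in $Z$.
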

In the paper \cite{guo-wenger-2020-area-minimizing-discs-in-locally-non-compact-metric-spaces}, Guo and the second author were able to define an ultralimit of a suitable subsequence of a given $p$-bounded sequence. Passing to a subsequence, however, often poses a problem, since the resulting map takes values in the ultralimit of the \emph{subsequence} of spaces instead of the ultralimit of the \emph{full sequence} of spaces. 

Our main interest in the ultralimit construction arises from the stability of Dehn functions under ultralimits.
\subsection{Stability of Dehn functions}
In order to state our stability results, we define the Sobolev filling area of a Lipschitz curve $\gamma \colon \mathbb{S}^1 \to X$ by
\begin{align*}
    \mathrm{FillArea}_X( \gamma ) \coloneqq \inf\{ \mathrm{Area}(v) \colon v \in W^{1,2}( \mathbb{D}, X ), \, \trace(v) = \gamma \},
\end{align*}
where $\mathbb{D} \subset \mathbb{R}^2$ is the Euclidean unit disk and $\mathbb{S}^1$ its boundary. See \Cref{section-parametrized-volume} for the definition of parametrized Hausdorff area $\mathrm{Area}(v)$. Here we simply mention that if $v$ is injective and Lipschitz, then $\mathrm{Area}(v)$ is equal to the two-dimensional Hausdorff measure of the image. Moreover, in case  $v \in W^{1,2}( \mathbb{D}, X )$ and $X$ is Riemannian, $\mathrm{Area}(v)$ is obtained by integrating the Jacobian of the weak differential of $v$. The (Sobolev--)Dehn function of $X$ is 
\begin{align*}
    \delta_{X}(r) = \sup\{ \mathrm{FillArea}_X(\gamma) \colon \text{ $\gamma \colon \mathbb{S}^1 \to X$ is Lipschitz and $\ell(\gamma) \leq r$} \}
\end{align*}
for $r > 0$, where $\ell( \gamma )$ denotes the length of $\gamma$. 

It was recently proved in a paper by Lytchak, the second author, and Young \cite{Lyt:Wen:You:20} that in the setting of sequences of proper length spaces, a local quadratic isoperimetric inequality is stable under ultralimits. It was then asked by Stadler and the second author whether this remains true in the non-proper setting and for general Dehn functions; see \cite[Problem 1]{stadler-wenger-2025-isoperimetric-inequalities-vs-upper-curvature-bounds}. The following theorem resolves the problem positively.
\begin{theorem}\label{theorem-stability-dehn-function}
Let $r_0 \in (0,\infty]$ and let $\delta \colon (0,r_0) \to (0,\infty)$ be continuous from the right. If $( X_m, d_m, p_m )$ is a sequence of complete pointed length spaces with
\begin{align*}
    \delta_{X_m}(r) \leq \delta(r)
    \quad\text{for every $r \in (0, r_0)$},
\end{align*}
then the ultralimit $X_\omega$ satisfies
\begin{align*}
    \delta_{X_\omega}( r ) \leq \delta(r)
    \quad\text{for every $r \in (0,r_0)$.}
\end{align*}
\end{theorem}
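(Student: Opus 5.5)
Fix $r\in(0,r_0)$ and a Lipschitz curve $\gamma\colon\S^1\to X_\omega$ with $\ell(\gamma)\le r$. The plan is to approximate $\gamma$ by curves in the $X_m$, fill these almost optimally, and pass to the ultralimit using \Cref{theorem-ultralimit-construction} and \Cref{theorem-ultralimit-trace-properties}. Right-continuity of $\delta$ enters because the approximating curves are slightly longer than $\gamma$. Given $\varepsilon>0$, choose $r'\in(r,r_0)$ with $\delta(r')\le\delta(r)+\varepsilon$.

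The first step is to lift $\gamma$. Parametrize $\gamma$ proportionally to arc length and pick $N$ equally spaced points $z_1,\dots,z_N\in\S^1$. Write $\gamma(z_i)=\lim_\omega x^i_m$. Since $X_m$ is a length space, we can join $x^i_m$ to $x^{i+1}_m$ by a curve of length less than $d_m(x^i_m,x^{i+1}_m)+1/(Nm)$. Concatenating these and parametrizing proportionally to arc length gives closed curves $\gamma_m$ with
\[
\lim\nolimits_\omega \ell(\gamma_m)\le \sum_i d_\omega(\gamma(z_i),\gamma(z_{i+1}))\le \ell(\gamma)\le r .
\]
To keep the length strictly below $r'$, I rescale slightly if necessary, or simply note that $\ell(\gamma_m)<r'$ for $\omega$-a.e.\ $m$. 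The $\gamma_m$ are equi-Lipschitz and equibounded. Their pointwise ultralimit $\gamma^N_\omega$ is a Lipschitz curve that agrees with $\gamma$ at the $z_i$ and consists of geodesic-like pieces. Hence $\gamma^N_\omega$ is within $r/N$ of $\gamma$ uniformly, but it need not equal $\gamma$. This is where the main obstacle lies: $\gamma$ itself is generally not an ultralimit of curves in $X_m$.

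The second step handles this mismatch. For $\omega$-a.e.\ $m$ we have $\delta_{X_m}(r')\le\delta(r')$, so there is $u_m\in W^{1,2}(\mathbb{D},X_m)$ with $\trace(u_m)=\gamma_m$ and $\mathrm{Area}(u_m)\le\delta(r')+\varepsilon$. To obtain a $2$-bounded sequence, I replace $u_m$ by an energy-controlled filling with the same trace and no larger area. This uses the standard fact, quoted from the Sobolev preliminaries, that an area-almost-minimizer can be chosen with $E^2_+\le C(\mathrm{Area}+\ell^2)$ after reparametrization, for instance via infimal energy among almost area minimizers or Morrey's $\varepsilon$-conformality. Precomposing with the Lipschitz parametrization, the $L^2$-distance to $p_m$ is bounded because $\gamma_m$ stays near the basepoint and the Poincaré inequality with trace controls the rest. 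The ultralimit $u_\omega$ then has $\mathrm{Area}(u_\omega)\le\delta(r')+\varepsilon$ by \eqref{ultralimit:lower-semicontinuity-volume} with $p=n=2$, since Hausdorff area is lower semicontinuous. Its trace is $\gamma^N_\omega$ by \eqref{ultralimit:compatibility-of-traces}.

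The third step closes the gap between $\gamma^N_\omega$ and $\gamma$ by an annulus of small area. For each $i$, the arc $\gamma|_{[z_i,z_{i+1}]}$ and the corresponding arc of $\gamma^N_\omega$ form a closed curve in $X_\omega$ of length at most $2r/N$. I fill each such small curve inside $X_\omega$. For this I need a local isoperimetric bound in $X_\omega$ for very short curves, which is circular. Instead, I do the whole construction at the level of the $X_m$. I choose the lifts more cleverly: approximate $\gamma$ on each small arc by its own sub-subdivision, refining as $N\to\infty$. Then I use the fact that a curve in $X_\omega$ whose pieces are ultralimits of short curves in $X_m$ can be filled in $X_m$ with area at most $\delta(2r/N)$ per piece. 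Summing gives $N\delta(2r/N)$, which requires $\delta(s)=o(s)$ — false in general.

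So the first approach to try for the hardest step is different: show that $\mathrm{FillArea}$ in $X_\omega$ is upper semicontinuous under uniform convergence of the boundary along such approximations. Concretely, take a diagonal sequence $\gamma^{N}_\omega\to\gamma$ uniformly with lengths at most $r$. Let $u^N_\omega$ be the fillings from the second step, with uniformly bounded energy. By a further ultralimit (in $N$) into $(X_\omega)_\omega$, followed by a $1$-Lipschitz retraction-free argument, the limit filling has trace $\gamma$ and area at most $\delta(r')+\varepsilon$. The point is that $\gamma$ is then the ultralimit of $\gamma^N_\omega$ in $(X_\omega)_\omega$, because the convergence is uniform. The remaining issue is that this filling lies in $(X_\omega)_\omega$ rather than $X_\omega$. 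I expect to resolve this using that $(X_\omega)_\omega$ is itself an ultralimit of the $X_m$ along a product ultrafilter, so the construction can be rerun directly with a diagonal choice of $m$ and $N$ along $\omega$. Letting $\varepsilon\to0$ then yields $\delta_{X_\omega}(r)\le\delta(r)$.
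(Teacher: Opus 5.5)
\textbf{There is a genuine gap in your second step.} No standard fact replaces an almost minimal filling $u_m$ of $\gamma_m$ by one with the \emph{same} trace, no larger area, and $E^2_+\le C(\mathrm{Area}+\ell^2)$. Morrey's $\varepsilon$-conformality lemma produces $u_m\circ\phi_m$ for a diffeomorphism $\phi_m$ of $\overline{\mathbb{D}}$. Its trace is the reparametrization $\gamma_m\circ\phi_m|_{\S^1}$, and nothing controls $\phi_m|_{\S^1}$.

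\begin{itemize}
\item The three-point normalization together with the Courant--Lebesgue lemma gives equicontinuous traces only when the boundary curves are Jordan curves in a uniform way. Your $\gamma_m$ need not be.
\item Even for such curves, the ultralimit of the traces is only a weakly monotone reparametrization of the limit curve.
\end{itemize}

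So you have neither a $2$-bounded sequence with the right traces, which \Cref{theorem-ultralimit-construction} requires, nor, via \eqref{ultralimit:compatibility-of-traces}, a limit filling of $\gamma^N_\omega$.

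This is exactly the difficulty the paper's \Cref{thm:stability} handles. It glues the flat cylinder $[0,3L]\times\S^1(L)$ to $X_m$ along $\gamma_m$, with $L>\lim_\omega\LIP(\gamma_m)$, and extends $u_m$ across it. The boundary then becomes the isometrically embedded round circle $\{3L\}\times\S^1(L)$, the same for all $m$. Now Morrey, a three-point condition and Courant--Lebesgue do give a $2$-bounded sequence with equicontinuous traces. In $Y_\omega$ the weakly monotone boundary parametrization is repaired by refilling small boundary regions inside the flat collar $\iota_\omega([2L,3L]\times\S^1(L))$, followed by a homotopy. The projection $P_\omega$ then turns the result into a filling of $\gamma$. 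The cylinder contributes no area after projection, because it is mapped onto the curve $\gamma(\S^1)$. Lower semicontinuity therefore gives $\mathrm{Area}\le\lim_\omega\mathrm{FillArea}_{X_m}(\gamma_m)$.

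\textbf{Your third step rests on a false premise and is only sketched.} A Lipschitz curve $\gamma$ in $X_\omega$ \emph{is} the ultralimit of an equi-Lipschitz, equi-bounded sequence $\gamma_m\colon\S^1\to X_m$ with $\lim_\omega\ell(\gamma_m)=\ell(\gamma)$. The paper quotes this from \cite[Corollary 2.6]{Lyt:Wen:You:20}, and it follows from your first step by a diagonal choice inside $\omega$:
\begin{itemize}
\item Parametrize each piece of $\gamma^N_m$ over $[z_i,z_{i+1}]$, so that $\sup_z d_\omega(\gamma^N_\omega(z),\gamma(z))\le C/N$.
\item Fix $\{w_j\}$ dense in $\S^1$ with $\gamma(w_j)=[y^j_m]$.
\item Let $\mathcal{A}_N$ be the set of $m\ge N$ such that, for all $j,N'\le N$: $\gamma^{N'}_m$ is $(\LIP(\gamma)+1)$-Lipschitz, $\ell(\gamma^{N'}_m)<\ell(\gamma)+1/N'$, and $d_m(\gamma^{N'}_m(w_j),y^j_m)<2C/N'$.
\item These sets are $\omega$-full and decreasing, with empty intersection. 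Set $\gamma_m=\gamma^{N(m)}_m$ with $N(m)=\max\{N\colon m\in\mathcal{A}_N\}$, and take arbitrary bounded curves for $m\notin\mathcal{A}_1$. Then $\lim_\omega\gamma_m=\gamma$.
\end{itemize}

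Hence neither the annulus nor the passage to $(X_\omega)_\omega$ is needed. As written, you also do not show how to bring the filling obtained in $(X_\omega)_\omega$ back into $X_\omega$. With this lifting, and the mapping cylinder argument in place of your second step, your final limit $r'\downarrow r$ via right-continuity of $\delta$ is exactly the paper's.
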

A coarse version of \Cref{theorem-stability-dehn-function} will be proved in \Cref{theorem-stability-dehn-function-coarse}.

There is an analogous definition of Lipschitz--Dehn function using Lipschitz maps instead of Sobolev maps. However, we cannot expect stability properties for the Lipschitz--Dehn function since a bound on the growth does not give control on the Lipschitz constants of the fillings.

We next highlight key applications of the stability.

\subsection{Applications}
Our study of Dehn functions is partly motivated by the recent \emph{analytic} characterization of $\mathrm{CAT}(\kappa)$ spaces --- complete metric spaces with curvature bounded above by $\kappa$ in the sense of Alexandrov. Let $M^{2}_\kappa$ be the complete simply connected surface of constant sectional curvature $\kappa$ and let $D_\kappa$ be its diameter; when $\kappa \neq 0$, $M^{2}_\kappa$ is isometric to a rescaled sphere or a hyperbolic plane. Let $\delta_\kappa$ be the Dehn function of $M^{2}_\kappa$, see \cite[Section 4.1]{stadler-wenger-2025-isoperimetric-inequalities-vs-upper-curvature-bounds} for a precise formula. When $\kappa = 0$, it holds that $\delta_{0}(r) = r^2/4\pi$ for $r > 0$ but always $\lim_{ r \to 0^{+} }\delta_{\kappa}(r)/r^2 = 1/(4\pi)$. We recall that if $X$ is $\mathrm{CAT}(\kappa)$, then the Dehn function of $X$ is bounded from above by $\delta_\kappa$ by Reshetnyak's majorization principle \cite{reshetnyak-1968-non-expansive-maps-in-a-space-of-curvature-no-greater-than-K}. The converse is known in proper length spaces by a theorem due to Lytchak and the second author \cite{Lyt:Wen:18:CAT}. More recently, Stadler and the second author \cite{stadler-wenger-2025-isoperimetric-inequalities-vs-upper-curvature-bounds} extended this to the non-proper setting as follows.
\begin{theorem}[{\cite[Theorem A]{stadler-wenger-2025-isoperimetric-inequalities-vs-upper-curvature-bounds}}]\label{theorem-local-cat(kappa)}
Let $\kappa \in \mathbb{R}$ and $r_0 \in (0, 2D_\kappa]$. Let $X$ be a complete length space and suppose that $\delta_{X}(r) \leq \delta_{\kappa}(r)$ for $r \in (0,r_0)$. Then every closed ball of radius $r_0/4$ of $X$ is geodesic and $\mathrm{CAT}(\kappa)$. Moreover, if $r_0 = 2D_\kappa$, then $X$ is a $\mathrm{CAT}(\kappa)$ space.
\end{theorem}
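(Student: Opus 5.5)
Since the abstract announces a simpler proof of this theorem via stability, the plan is to reduce to the locally compact (proper) case of Lytchak--Wenger \cite{Lyt:Wen:18:CAT} using ultralimits and \Cref{theorem-stability-dehn-function}. Fix a non-principal ultrafilter $\omega$ and form the ultrapower $X_\omega$ of the constant sequence $X_m = X$ with a basepoint $p \in X$. Applying \Cref{theorem-stability-dehn-function} with $\delta = \delta_\kappa$ (continuous, hence right-continuous), we get $\delta_{X_\omega}(r) \le \delta_\kappa(r)$ for $r\in(0,r_0)$. The first goal is to deduce from the Lytchak--Wenger argument the conclusion for $X_\omega$, and then to transfer it back to $X$ through the canonical isometric embedding $X \hookrightarrow X_\omega$.

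The difficulty is that $X_\omega$ is not proper. I would therefore use the standard fact that the proper-case proof uses properness only to solve the Plateau problem (existence of energy/area minimizing discs with given Lipschitz boundary curve, and their regularity). In the non-proper setting, Guo--Wenger \cite{guo-wenger-2020-area-minimizing-discs-in-locally-non-compact-metric-spaces} give minimizers in an ultracompletion. Here the new ultralimit of Sobolev maps (\Cref{theorem-ultralimit-construction,theorem-ultralimit-trace-properties}) should make this directly available for the full sequence. Take a Jordan curve $\gamma$ in $X_\omega$ of length $<r_0$ (first in $X$, viewed in $X_\omega$). Choose a minimizing sequence $v_k$ of area-and-energy-controlled fillings, normalized by a three-point condition so that the traces are equicontinuous. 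Its ultralimit lies in $W^{1,2}$ of the ultrapower $(X_\omega)_\omega$, which is again an ultrapower of $X$ and, by \Cref{theorem-stability-dehn-function} again, still satisfies the Dehn bound. Lower semicontinuity of area \eqref{ultralimit:lower-semicontinuity-volume} and trace compatibility \eqref{ultralimit:compatibility-of-traces} then show that the limit is an area minimizer with boundary $\gamma$.

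With minimal discs available in an ultrapower satisfying the $\delta_\kappa$ bound, I would run the Lytchak--Wenger comparison argument. The isoperimetric inequality makes the minimizer's intrinsic space a $\mathrm{CAT}(\kappa)$ disc, and the minimizer is $1$-Lipschitz from it. Majorization of triangles of perimeter $<r_0$ by $\mathrm{CAT}(\kappa)$ discs then yields the $\mathrm{CAT}(\kappa)$ comparison inequality for such triangles in the ultrapower, with geodesics existing because ultrapowers of length spaces are geodesic. Restricting to $X$ gives the comparison inequality for triangles in $X$ whose vertices are joined by geodesics in $X$.

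The main obstacle is producing geodesics inside $X$ itself, since $X$ is only a length space. For this I would use the uniqueness of geodesics of length $<D_\kappa$ in the (locally $\mathrm{CAT}(\kappa)$) ultrapower together with the comparison inequality. Approximate midpoints in $X$ are close to the unique midpoint in $X_\omega$, so a sequence of approximate midpoints is Cauchy. Completeness of $X$ then gives exact midpoints, hence geodesics in balls of radius $r_0/4$, and these balls are convex and $\mathrm{CAT}(\kappa)$. For $r_0 = 2D_\kappa$, I would apply the local-to-global step (Cartan--Hadamard/Alexander--Bishop type for $\kappa>0$ via the Dehn bound on all curves of length $<2D_\kappa$). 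This mirrors \cite{Lyt:Wen:18:CAT} and gives that $X$ is $\mathrm{CAT}(\kappa)$.
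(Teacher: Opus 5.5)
Your architecture matches the paper's: stability gives $\delta_{X_\omega}\le\delta_\kappa$, a Plateau solution in an ultracompletion feeds the Lytchak--Wenger intrinsic-disc argument, and completeness of $X$ pulls the unique short geodesics of $X_\omega$ back into $X$. However, the step that produces the Plateau solution does not work as stated.

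Properties \eqref{ultralimit:lower-semicontinuity-volume} and \eqref{ultralimit:compatibility-of-traces} only give that the ultralimit $v$ of your minimizing sequence lies in $\Lambda(\gamma,(X_\omega)_\omega)$ with $\mathrm{Area}(v)\le \mathrm{FillArea}_{X_\omega}(\gamma)$. They say nothing about competitors in the larger space $(X_\omega)_\omega$ where $v$ lives. If your sequence lives in $X$, they likewise say nothing about competitors in $X_\omega$.

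To know that $v$ is area minimizing there, you need the reverse inequality $\mathrm{FillArea}_{X_\omega}(\gamma)\le \mathrm{FillArea}_{(X_\omega)_\omega}(\gamma)$. That is, you must be able to lift discs from the ultralimit back to the sequence with almost the same area. \Cref{thm:stability} provides only the opposite inequality. For a mere length space no lifting is available, since there is no Lipschitz connectivity.

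This is not a technicality. The bound $\mathcal{H}^2(\Omega)\le\delta_\kappa(\ell(\partial\Omega))$ for Jordan domains in the intrinsic disc (as in \cite[Theorem 8.2]{Lyt:Wen:18:intrinsic}) is proved by cutting out the preimage of $\Omega$ and gluing in a $\delta_\kappa$-filling taken in the target space. That yields nothing unless the disc is minimal among all competitors in that target space. Nor can you run the replacement on the approximating discs and pass to the limit, because lengths of boundary curves are only lower semicontinuous.

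The paper closes exactly this gap in \Cref{proposition-existence-of-Plateau-problem}:
\begin{enumerate}
\item Using \cite[Proposition 4.1]{stadler-wenger-2025-isoperimetric-inequalities-vs-upper-curvature-bounds}, $X$ is embedded into complete length spaces $Y_m$ with three properties: $Y_m$ lies in the $1/m$-neighbourhood of $X$; it is $L$-Lipschitz $1$-connected up to scale $1/(Lm)$; and $\delta^{\mu_i}_{Y_m}(r)\le\delta_\kappa(r)+r^2/\sqrt{m}$ for $r\in(1/\sqrt m,r_0)$.
\item The ultralimit of $(Y_m)$ is still the ultracompletion $X_\omega$.
\item The local Lipschitz connectivity makes the lifting theorem \cite[Theorem 5.1]{Wen:2019} applicable, giving $\lim_\omega\mathrm{FillArea}^{\mu_i}_{Y_m}(c)\le \mathrm{FillArea}^{\mu_i}_{X_\omega}(c)$.
\item Combine this with almost energy-minimizing $u_m\in\Lambda(\Gamma,Y_m)$ from Morrey's lemma, and with property (ET) of $X_\omega$ from \Cref{lemma-property-ET}, so that $\mathrm{Area}=\mathrm{Area}^{\mu_i}\le E^2_+$.
\item This yields $E^2_+(u_\omega)=\mathrm{Area}(u_\omega)=\mathrm{FillArea}_{X_\omega}(\Gamma)$, a genuine Plateau solution to which \cite[Theorem 1.1]{Lyt:Wen:18:intrinsic} applies.
\end{enumerate}

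There are also two smaller points in your last paragraph. First, comparison for triangles of perimeter $<r_0$ does not by itself make balls of radius $r_0/4$ $\mathrm{CAT}(\kappa)$, because triangles in such balls can have perimeter up to $3r_0/2$. The paper adds uniqueness and continuous dependence of short geodesics, convexity of the balls, and the radial lemma \cite[Lemma 9.52]{alexander-kapovitch-petrunin-2024-alexandrov-geometry-foundations}. Second, for $r_0=2D_\kappa$ no Cartan--Hadamard-type step is needed: comparison for all triangles of perimeter $<2D_\kappa$, together with your midpoint argument, already gives $\mathrm{CAT}(\kappa)$ directly.
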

    The general strategy of \cite{stadler-wenger-2025-isoperimetric-inequalities-vs-upper-curvature-bounds} is similar to that of \cite{Lyt:Wen:18:CAT}. However, it involves a delicate analysis of the intrinsic structure of a solution to a Plateau problem in the ultracompletion of $X$. Using the stability result, \Cref{theorem-stability-dehn-function}, we can give a proof that is considerably simpler by employing the strategy  of \cite{Lyt:Wen:18:CAT} directly.

%

Our second application concerns Gromov hyperbolic spaces. In \cite{gromov-1987-hyperbolic-groups}, Gromov famously showed that spaces with a subquadratic isoperimetric inequality are Gromov hyperbolic and thus have a linear isoperimetric inequality. In fact, it suffices to have a quadratic isoperimetric inequality with a sufficiently small constant. The second author proved a sharp version of Gromov's result in \cite{wenger-2008-gromov-hyperbolic-spaces-and-the-sharp-isoperimetric-constant} using the theory of integral currents. We establish the following version using our stability results.
\begin{theorem}\label{theorem-gromov's-theorem-introduction}
Suppose that $X$ is a complete geodesic metric space with
\begin{align*}
    \limsup_{ r \rightarrow \infty } \frac{ \delta_{X}(r) }{ r^2 } < \frac{1}{4\pi}.
\end{align*}
Then $X$ is Gromov hyperbolic.
\end{theorem}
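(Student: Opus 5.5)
Suppose $X$ is not Gromov hyperbolic; I will derive a contradiction via an asymptotic cone. Fix $\varepsilon>0$ and $R_0$ with $\delta_X(r)\le (\tfrac{1}{4\pi}-\varepsilon)r^2$ for $r\ge R_0$. A standard characterization (Gromov, Papasoglu, Druţu) says that a geodesic space is hyperbolic if and only if all its asymptotic cones are real trees. So non-hyperbolicity gives scales $t_m\to\infty$, basepoints $p_m$, and a non-principal ultrafilter $\omega$ for which the asymptotic cone $X_\omega=\lim_\omega (X, t_m^{-1}d, p_m)$ is not a real tree. Since $X_\omega$ is a complete geodesic space, this means it contains a nonconstant closed Lipschitz (even biLipschitz) curve. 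Concretely, I would take a geodesic triangle that is not thin, or a rectifiable simple loop.

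Next I rescale. For $X_m=(X,t_m^{-1}d)$ we have $\delta_{X_m}(r)=t_m^{-2}\delta_X(t_m r)$. Hence for every fixed $r>0$ and all $m$ with $t_m r\ge R_0$, $\delta_{X_m}(r)\le (\tfrac1{4\pi}-\varepsilon)r^2$. For small $r$ the bound fails, so I cannot apply \Cref{theorem-stability-dehn-function} verbatim. Instead I use its coarse version, \Cref{theorem-stability-dehn-function-coarse}, which presumably handles bounds of the form $\delta_{X_m}(r)\le\delta(r)$ for $r\ge \eta_m$ with $\eta_m\to0$. Alternatively, I would add a small-scale quadratic term: for $r\le R_0$, $\delta_X(r)\le C$ for some constant $C$, which is plausible from the large-scale bound by subdividing into short loops. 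After rescaling this gives $\delta_{X_m}(r)\le (\tfrac1{4\pi}-\varepsilon)r^2+Ct_m^{-2}$, and the error term vanishes in the limit. Either way, the conclusion is $\delta_{X_\omega}(r)\le(\tfrac1{4\pi}-\varepsilon)r^2$ for all $r>0$.

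Then the cone satisfies a quadratic isoperimetric inequality with constant strictly below $\tfrac1{4\pi}$ at all scales. It is also scale invariant, since $\delta(r)/r^2$ is constant in the bound. For small $r$ this contradicts the existence of any nontrivial rectifiable loop. The key input is a lower bound of Euclidean type: in a space where every short loop has a filling, a minimal disc $u$ (solution of the Plateau problem, or an energy-minimizing filling from Lytchak--Wenger theory) bounded by a rectifiable Jordan curve satisfies area density $\ge \tfrac1{4\pi}\cdot$ the Euclidean isoperimetric constant at almost every point. More precisely, blowing up $u$ at a point of approximate metric differentiability yields a seminorm $\|\cdot\|$ on $\R^2$, and for a norm the filling area of its unit circle in any target is at least $\tfrac1{4\pi}\ell^2$ (the sharp planar isoperimetric inequality of Busemann/Hausdorff area). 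Combining this with the upper bound $(\tfrac1{4\pi}-\varepsilon)\ell^2$ forces degenerate derivatives, i.e.\ $\mathrm{Area}(u)=0$. This holds for every filling of every loop, so the cone has $\delta_{X_\omega}\equiv0$, and by the Lytchak--Wenger argument (a loop with zero-area fillings in a space with quadratic isoperimetric inequality bounds a tree) such a space is a real tree. That contradicts our choice.

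The main obstacle is this last step. We need that a quadratic isoperimetric inequality with constant $<\tfrac1{4\pi}$ makes the complete, possibly non-proper cone a real tree. I would try first to apply Plateau solutions in the ultracompletion of $X_\omega$, as in Guo--Wenger. I would combine that with the stability theorem applied to tangent cones of $X_\omega$, to reduce to a blow-up where $\delta\equiv0$, and then show that every Lipschitz loop is degenerate.
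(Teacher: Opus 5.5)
Your reduction to asymptotic cones matches the paper's first step, and so does your use of \Cref{theorem-stability-dehn-function-coarse} with $r_m=R_0/t_m$ to get $\delta_{Y}(r)\le(\frac{1}{4\pi}-\varepsilon)r^2$ for all $r>0$ in every asymptotic cone $Y$. It then indeed suffices to exclude a rectifiable Jordan curve $\Gamma\subset Y$. The genuine gap is the step you flag yourself: getting from ``every $u\in W^{1,2}(\mathbb{D},Y)$ has zero area'' to a contradiction. An unspecified ``Lytchak--Wenger argument'' cannot serve as a black box here, because $Y$ is not proper and compactness of filling discs is exactly what is missing. Reducing to a blow-up with $\delta\equiv 0$ gains nothing either, since $Y$ already has $\delta_Y\equiv 0$ at that point.

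The missing idea is to turn zero area into arbitrarily small \emph{energy} and then use compactness in the ultracompletion. The paper argues as follows.
\begin{enumerate}
\item Since $\delta_Y(\ell(\Gamma))<\infty$, the class $\Lambda(\Gamma,Y)$ is non-empty, and all its elements have zero area.
\item Morrey's $\varepsilon$-conformality lemma then yields $u_m\in\Lambda(\Gamma,Y)$ with a fixed three-point condition and $E^2_+(u_m)\le 1/m$.
\item Courant--Lebesgue makes the traces equicontinuous.
\item The Sobolev ultralimit in the ultracompletion $Y_\omega$, via lower semicontinuity of energy and compatibility of traces (\Cref{theorem-ultralimit-trace-properties}), produces $u_\omega\in\Lambda(\Gamma,Y_\omega)$ with $E^2_+(u_\omega)=0$.
\item So $u_\omega$ is constant, which is impossible for a map whose trace parametrizes $\Gamma$.
\end{enumerate}
A Plateau solution in an ultracompletion, as you suggest, could replace this only together with its quasiconformality, which is what makes zero area force zero energy. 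Your proposal does not supply that link.

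Your degenerate-area step also needs correction. The claim that the unit circle of a norm has filling area at least $\frac{1}{4\pi}\ell^2$ is false. In $(\R^2,\ell^\infty)$ the unit circle has length $8$, while the unit ball has Hausdorff area $\pi<16/\pi=\frac{1}{4\pi}\cdot 8^2$, so no contradiction with the upper bound arises. The right test curve is the boundary of the isoperimetric set $\mathbb{I}_V$, for which $\mu_V(\mathbb{I}_V)\ge C_\mu\,\ell^2(\partial\mathbb{I}_V)$ with $C_\mu=\frac{1}{4\pi}$.

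Moreover, the normed plane $V$ obtained by blowing up lives in a tangent cone of $Y$, not in $Y$ itself. So you must do two further things. First, transfer the isoperimetric upper bound to that tangent cone, using scale invariance and the stability theorem once more. Second, show that fillings there cannot have less area than $\mu_V(\mathbb{I}_V)$. The paper gets the second point from a $1$-Lipschitz map onto the injective hull of $V$ fixing $V$, combined with minimality of planes coming from lower semicontinuity of the area (\Cref{lemma-degenerate-area}). No minimal disc is needed in this step, which matters because existence of minimal discs in the non-proper cone $Y$ is not available.
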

In the setting of proper geodesic metric spaces, a proof of \Cref{theorem-gromov's-theorem-introduction} was given in \cite{Lyt:Wen:You:20}. Our proof parallels this proof and relies, in addition, on the stability of Dehn functions, \Cref{theorem-stability-dehn-function}, or rather the coarse version, \Cref{theorem-stability-dehn-function-coarse}. 


\subsection{Outlines of the proofs and additional results}
We begin by highlighting some of the ideas in the proof of the ultralimit theorems. 
The central notion we introduce and study in this context is that of a Lusin--Lipschitz admissible sequence, cf.~\Cref{section-lusin-lipschitz-admissible-sequences}. Roughly speaking, this is a sequence of Lipschitz maps $f^k$ from $\Omega$ into a metric space $X$ which agree on larger and larger subsets of $\Omega$, eventually exhausting $\Omega$ in measure. Lusin--Lipschitz admissible sequences have natural limits because, by definition, $(f^k(z))$ is eventually constant for almost every $z\in\Omega$.  Now, let $(u_m)$ be a $p$-bounded sequence of Sobolev maps $u_m\colon\Omega\to X_m$ as in \Cref{theorem-ultralimit-construction}. We first embed $X_m$ isometrically into a larger space $X'_m$ with suitable Lipschitz extension properties. This allows us to show that each $u_m$ is the limit of a Lusin--Lipschitz admissible sequence $(u_m^k)$ of maps $u_m^k\colon\Omega\to X'_m$ in such a way that, for each $k$, the sequence $(u_m^k)$ is Lipschitz bounded. In particular, for each $k$, the pointwise ultralimit $u_\omega^k$ is a Lipschitz map into the ultralimit $X'_\omega$ of the pointed sequence $(X'_m)$. The crucial point we prove and use in \Cref{section-the-ultralimit-construction} is that the sequence $(u_\omega^k)$ is again Lusin--Lipschitz admissible and, for each $k \in \N$, $u_\omega^k$ maps a large subset of $\Omega$ into $X_\omega$. This enables us to define its limit $u_\omega$ and to show that $u_\omega$ has image in the subset $X_\omega$ of $X'_\omega$, after possibly modifying $u_\omega$ on a negligible subset. The map $u_\omega$ is our candidate for the ultralimit of the $p$-bounded sequence $(u_m)$.

In order to prove that $u_\omega$ is Sobolev and has the properties stated in \Cref{theorem-ultralimit-construction,theorem-ultralimit-trace-properties}, we use a \emph{geometric realization} of the ultralimit. Namely, we establish the following theorem akin to Gromov's embedding theorem \cite[Section 6]{gromov-1981-groups-of-polynomial-growth-and-expanding-maps}.

\begin{restatable}{theorem}{GromovEmbedding}
\label{thm:embedding-mapping-packages}
 Let $(Y^k)_{k\in I}$ be a countable collection of separable metric spaces and $(X_m, d_m, p_m)$ a sequence of complete pointed metric spaces. Let $f_m^k\colon Y^k\to X_m$ be continuous maps for $m\in\N$ and $k\in I$, and suppose that the sequence $( f_m^k )$ is equi-continuous and equi-bounded for each $k\in I$. Then:
 \begin{enumerate}
  \item there exist a complete metric space $Z$, a subsequence $(m_l)$, and isometric embeddings $\iota_l\colon X_{m_l}\to Z$ such that, for each $k\in I$, the sequence $(\iota_l\circ f_{m_l}^k)$ converges to a continuous map $f_\infty^{k} \colon Y^k \to Z$ pointwise everywhere and uniformly on compact sets;
   \item there exists an isometric embedding $\eta\colon C\to X_\omega$, where $C=\overline{\bigcup_{k\in I} f_\infty^k(Y^k)}$, such that $$f_\omega^k = \eta\circ f_\infty^k$$ for every $k\in I$.
 \end{enumerate}
 Moreover, given subsets $\N\supset \mathcal{F}_1\supset \mathcal{F}_2\supset\dots$ satisfying $\omega(\mathcal{F}_m) = 1$ for all $m$, the subsequence $(m_l)$ can be chosen so that $m_l\in \mathcal{F}_{N_l}$ for a strictly increasing sequence $( N_l )$.
\end{restatable}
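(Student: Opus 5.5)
Our plan is to follow Gromov's embedding theorem: first build a countable "skeleton" inside the $X_m$ that sees all the maps at once, then glue the $X_{m_l}$ along this skeleton into one space $Z$, and finally compare the limit with the ultralimit.

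\emph{Step 1: the skeleton.} For each $k\in I$ fix a countable dense set $D^k\subset Y^k$. Enumerate the countable set $\{(k,y): k\in I, y\in D^k\}\cup\{\ast\}$ as $(a_j)_{j\in\N}$. Set $x_m^{\ast}=p_m$ and $x_m^{(k,y)}=f_m^k(y)$. Equi-boundedness gives, for each pair $i,j$, a uniform bound on $d_m(x_m^{a_i},x_m^{a_j})$. By a diagonal argument, using the sets $\mathcal{F}_N$ at stage $N$, choose $m_l\in\mathcal{F}_{N_l}$ with $(N_l)$ strictly increasing so that $\lim_l d_{m_l}(x^{a_i}_{m_l},x^{a_j}_{m_l})=:\rho(a_i,a_j)$ exists for all $i,j$. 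To be consistent with the ultralimit we do \emph{not} pass to a free subsequence. Instead, at stage $l$ we pick $m_l\in\mathcal{F}_{N_l}$, with $m_l>m_{l-1}$, lying in the set
\[
A_l=\{m: |d_m(x_m^{a_i},x_m^{a_j})-\rho_\omega(a_i,a_j)|<1/l \text{ for } i,j\le l\},
\]
where $\rho_\omega=\lim_\omega d_m(x_m^{a_i},x_m^{a_j})$. The set $A_l$ has $\omega$-measure one, so $A_l\cap\mathcal{F}_{N_l}$ is infinite and such a choice exists. Consequently the limiting pseudometric is $\rho=\rho_\omega$, which is exactly the distance between the corresponding points of $X_\omega$. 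This coupling of the subsequence to $\omega$ is the step I expect to need the most care, because it is what makes part (2) work.

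\emph{Step 2: gluing into $Z$.} Construct a metric on $Z_0=\bigsqcup_l X_{m_l}$ in the style of Gromov's construction. Set
\[
d(x,x')=\inf_{j}\bigl(d_{m_l}(x,x^{a_j}_{m_l})+\epsilon_{l,l'}+d_{m_{l'}}(x^{a_j}_{m_{l'}},x')\bigr)
\]
for $x\in X_{m_l}$ and $x'\in X_{m_{l'}}$ with $l\neq l'$, and keep the original metric inside each $X_{m_l}$. Here $\epsilon_{l,l'}\to0$ is chosen (e.g.\ $\epsilon_{l,l'}=\max(1/l,1/l')$ after restricting to indices $j\le\min(l,l')$, or via the standard admissible-metric/Kuratowski approach) so that the triangle inequality holds. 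The choice in Step 1 guarantees that $|d_{m_l}(x^{a_i},x^{a_j})-d_{m_{l'}}(x^{a_i},x^{a_j})|\le 2/\min(l,l')$ for $i,j\le\min(l,l')$. Let $Z$ be the completion of the quotient, with the inclusions $\iota_l$ as the isometric embeddings. Then $(\iota_l(x^{a_j}_{m_l}))_l$ is Cauchy in $Z$ for each $j$, and we define $f_\infty^k(y)$ as its limit for $y\in D^k$. The family $(\iota_l\circ f^k_{m_l})$ is equicontinuous and converges on the dense set $D^k$. An Arzelà–Ascoli-type $\varepsilon/3$ argument then shows that it converges pointwise everywhere, and uniformly on compact sets, to a continuous $f^k_\infty$: points near $y$ in $D^k$ control the oscillation uniformly in $l$. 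This proves (1).

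\emph{Step 3: the embedding $\eta$.} Define $\eta$ on the countable dense subset $\{f^k_\infty(y): y\in D^k\}$ of $C$ by $\eta(f^k_\infty(y))=\lim_\omega f^k_m(y)=f^k_\omega(y)$. This is well defined and distance preserving, because distances in $Z$ equal $\rho=\rho_\omega$, which are the distances in $X_\omega$. Hence $\eta$ extends to an isometric embedding of $C$, as $X_\omega$ is complete. It remains to check that $f^k_\omega=\eta\circ f^k_\infty$ on all of $Y^k$. Both maps are continuous: $f^k_\omega$ is continuous because equicontinuity passes to ultralimits, and $\eta\circ f^k_\infty$ is continuous by Step 2. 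They agree on $D^k$, so they agree everywhere. This gives (2) and the final claim about $\mathcal{F}_N$.
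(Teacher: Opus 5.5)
Your argument is essentially the paper's proof. Your $\omega$-coupled choice of $m_l$ from the full-measure sets $A_l\cap\mathcal{F}_{N_l}$ is exactly the paper's use of the sets $\mathcal{G}_N$ (intersected with $\mathcal{F}_N$). The extension to all of $Y^k$ by equicontinuity and the definition of $\eta$ on the skeleton also match.

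The one step that does not work as written is the gluing in Step 2. You join every pair $X_{m_l},X_{m_{l'}}$ directly, so the triangle inequality must also hold for three points in three different spaces. Your sample choice $\epsilon_{l,l'}=\max(1/l,1/l')$ can fail there. Write $s^l_j=x^{a_j}_{m_l}$ and take $x=s^1_1$, $y=s^{l''}_1$, $x'=s^{l'}_2$ with $2\le l'<l''$. Only $j=1$ is admissible between $X_{m_1}$ and $X_{m_{l'}}$, so $d(x,x')=1+d_{m_{l'}}(s^{l'}_1,s^{l'}_2)$. On the other hand $d(x,y)=1$ and $d(y,x')\le d_{m_{l''}}(s^{l''}_1,s^{l''}_2)+1/l'$. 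The triangle inequality therefore needs $d_{m_{l'}}(s^{l'}_1,s^{l'}_2)-d_{m_{l''}}(s^{l''}_1,s^{l''}_2)\le 1/l'$. Step 1 only bounds this difference by $1/l'+1/l''$, and nothing in the construction prevents the excess.

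The repair is cheap: take $\epsilon_{l,l'}=2\max(1/l,1/l')$. For $x\in X_{m_a}$, $y\in X_{m_b}$, $z\in X_{m_c}$ with (by symmetry) $a<c$, route $d(x,z)$ through the index $i\le\min(a,b)$ that realizes $d(x,y)$. This index is admissible since $i\le a=\min(a,c)$. The triangle inequality then reduces to $\epsilon_{a,c}+1/b+1/c\le\epsilon_{a,b}+\epsilon_{b,c}$, which holds in each of the three possible positions of $b$.

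The paper avoids the three-space check altogether by building $Z$ inductively as a direct limit. It glues $X_{m_l}$ to $Z^{l-1}$ only through skeleton points of $X_{m_{l-1}}$, with jump $\varepsilon_{N_{l-1}}+\varepsilon_{N_l}$. Each stage then needs only the comparison between consecutive spaces, which is your two-space estimate.

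Your all-pairs gluing gains something in return. Since $d(\iota_l s^l_j,\iota_{l'}s^{l'}_j)\le\epsilon_{l,l'}\to 0$, the sequence $(\iota_l(s^l_j))_l$ is Cauchy directly. In the paper, the chain of jumps makes this rely on summability of $(\varepsilon_N)$.
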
 
The flexibility provided by this result allows the independent study of the properties stated in \Cref{theorem-ultralimit-construction,theorem-ultralimit-trace-properties}. 

Next, we formulate a theorem underlying the stability of Dehn functions, which can be understood as lower semicontinuity of filling areas under ultralimits.

\begin{theorem}\label{thm:stability-introduction}
Suppose that $( X_m, d_m, p_m )$ is a sequence of complete pointed length spaces and $X_\omega$ the ultralimit. If $( \gamma_m \colon \mathbb{S}^1 \to X_m )$ is a Lipschitz bounded sequence with $\gamma = \lim\nolimits_{\omega} \gamma_m$ and $\lim\nolimits_{\omega} \mathrm{FillArea}_{X_m}( \gamma_m ) < \infty$, then there exists $u \in W^{1,2}( \mathbb{D}, X_\omega )$ with
\begin{align*}
    \trace(u) = \gamma
    \quad\text{and}\quad
    \mathrm{Area}( u )
    \leq
    \lim\nolimits_\omega \mathrm{FillArea}_{X_m}( \gamma_m ).
\end{align*}
\end{theorem}
The theorem implies, in particular, that $$\mathrm{FillArea}_{X_\omega}(\gamma)\leq \lim\nolimits_\omega \mathrm{FillArea}_{X_m}( \gamma_m ).$$
The proof builds upon the ultralimit construction of Sobolev maps and uses, in addition, a mapping cylinder construction similar to \cite{stadler-the-structure-of-minimal-surfaces-in-cat-0-spaces,Creu:22}. A crucial point which we exploit is that the mapping cylinder construction is robust under taking ultralimits.

\subsection{Final remark and structure of the paper}\label{section-structure-of-the-paper}
In convex geometry there are several natural definitions $\mu$ of area in two-dimensional normed spaces, among which the Holmes--Thompson area $\mu^{ht}$, the Benson or Gromov $\text{mass}^{*}$ area $\mu^{*}$, Ivanov's inscribed Riemannian area $\mu^{i}$, as well as the Hausdorff or Benson-Hausdorff area $\mu^{bh}$; see \cite{alvarez-thompson-2004-volumes-on-normed-and-finsler-spaces} and \Cref{section-lower-semi-continuous-volumes} below. Every definition $\mu$ of area gives rise to a parametrized area $\mathrm{Area}^\mu(u)$ for Sobolev maps with values in a metric space, and the parametrized Hausdorff area $\mathrm{Area}(u)$ used in the subsections above is the one corresponding to the Benson-Hausdorff definition $\mu^{bh}$. For simplicity, we stated \Cref{theorem-stability-dehn-function,theorem-local-cat(kappa),theorem-gromov's-theorem-introduction,thm:stability-introduction} for the parametrized Hausdorff area but they can be generalized to other definitions of area as well. Indeed, \Cref{theorem-stability-dehn-function,thm:stability-introduction} hold for any lower semi-continuous area; see \Cref{thm:stability,theorem-stability-dehn-function-coarse}.  \Cref{theorem-local-cat(kappa)} extends to $\mu^*$ and $\mu^{i}$. In fact, the statement holds for any lower semicontinuous area for which the isoperimetric constant is $1/(4\pi)$ and inner product spaces are the only spaces realizing the isoperimetric constant. The latter assumption is necessary since the conclusion of the theorem fails for the Holmes--Thompson area already in normed planes and $\kappa = 0$.
Finally, we also formulate a version of \Cref{theorem-gromov's-theorem-introduction} for all lower semi-continuous area $\mu$, cf. \Cref{theorem-gromov's-theorem}, where the constant $1/(4\pi)$ needs to be replaced by the \emph{isoperimetric constant} of the $\mu$ area; see \eqref{equation-the-isoperimetric-constant}.

\medskip

We end with outlining the structure of the paper. \Cref{section-preliminaries} establishes notation and reviews some basic concepts. In \Cref{sec:sobolevmaps}, we recall some necessary elements from Sobolev theory, including Haj{\l}asz gradients and various notions of area. The embedding theorem, \Cref{thm:embedding-mapping-packages}, is proved in \Cref{section-mapping-package}. We establish important properties of Lusin--Lipschitz admissible sequences in \Cref{section-lusin-lipschitz-admissible-sequences} and use them in \Cref{section-the-ultralimit-construction} to prove our ultralimit theorems, \Cref{theorem-ultralimit-construction,theorem-ultralimit-trace-properties}. \Cref{section-stability-results} contains the proofs of \Cref{thm:stability-introduction,theorem-stability-dehn-function}; in fact, we establish somewhat more general versions of these results. Finally, the applications are proved in \Cref{section-applications}.

\subsection{Acknowledgements}
Both authors were supported by Swiss National Science Foundation grant 212867. T.I. was also supported by the Research Council of Finland, project number 332671. The authors thank Stephan Stadler for fruitful discussions.

\section{Preliminaries}\label{section-preliminaries}
We give a quick overview of our basic notation. We need the notion of a \emph{pointed metric space} which is a triple $( X, d_X, p_X )$ such that $d_X \colon X \times X \rightarrow [0,\infty)$ is a distance on $X$ and $p_X \in X$ is a point. The open and closed metric balls centered at $y \in X$ and of radius $r > 0$ are denoted by $B_X( y, r )$ and $\overline{B}_X( y, r )$, respectively. Typically it will be clear from the context which space the ball is being considered in and in such a case we drop the subscript $X$. We use the same convention for the distance as well. We will be mainly working with \emph{complete} metric spaces. Given a non-empty set $A \subset X$ in a metric space, $\overline{A}$ will denote the \emph{closure} of $A$.

Lipschitz mappings between metric spaces play an important role for us so we recall the definition. Given a pair of metric spaces $X$ and $Y$, a map $f \colon X \rightarrow Y$ is \emph{Lipschitz} if
\begin{align*}
    \sup_{ x \neq x' } \frac{ d(f(x),f(x')) }{ d(x,x') } < \infty.
\end{align*}
The supremum is the \emph{Lipschitz constant} of $f$ and we denote it by $\LIP(f)$. We say that $f$ is $L$-Lipschitz if $\LIP(f) \leq L$. We say that $f$ is \emph{$L$-bi-Lipschitz} if
\begin{align*}
    L^{-1} d(x,y) \leq d(f(x),f(y)) \leq Ld(x,y)
    \quad\text{for every $x, y \in X$.}
\end{align*}
If $L = 1$, we say that $f$ is \emph{isometric} or an \emph{isometric embedding}.

\subsection{Curves and length distances}
A \emph{curve} is a continuous map $\gamma \colon I \rightarrow X$ where $I$ is an interval or the \emph{unit circle} $\mathbb{S}^1$. If $I$ is an interval, then the length of $\gamma$ is defined by
\begin{align*}
    \ell( \gamma )
    =
    \sup\left\{
        \sum_{i=0}^{k-1} d( \gamma(t_i), \gamma(t_{i+1}) )
        \colon t_i \in I
        \,\text{and}\,
        t_0 < t_1 < \dots < t_k
    \right\}
\end{align*}
and an analogous definition applies in the case $I = \mathbb{S}^1$. A \emph{Jordan curve} $\Gamma$ is the image of a topological embedding $\gamma \colon \mathbb{S}^1 \rightarrow X$. A curve $\gamma \colon \mathbb{S}^1 \rightarrow X$ is a \emph{weakly monotone parametrization of $\Gamma$} if it a uniform limit of homeomorphisms $\mathbb{S}^1 \rightarrow \Gamma$. 

A metric space $X$ is a \emph{length space} if, for every $x, y \in X$,
\begin{align*}
    d(x,y) = \inf\{ \ell(\gamma) \colon \gamma \colon [a,b] \to X, \gamma(a) = x, \gamma(b) = y \}.
\end{align*}
A curve $\gamma \colon [a,b] \to X$ is a \emph{geodesic} if $d( \gamma(a), \gamma(b) ) = \ell( \gamma )$, and a metric space $X$ is \emph{geodesic} if every pair $x, y \in X$ can be joined with a geodesic.

\subsection{Hausdorff measures}
We consider the $n$-dimensional Hausdorff measure on a metric space $X$. We recall that for a set $A \subset X$ we have
\begin{align*}
    \mathcal{H}^{n}( A ) = \sup_{ \delta > 0 } \mathcal{H}^{n}_{\delta}(A),
\end{align*}
where
\begin{align*}
    \mathcal{H}^{n}_{\delta}(A)
    =
    \frac{ \omega_n }{ 2^n }
    \inf\left\{
        \sum_{ i } \diam(E_i)^{n}
        \colon
        \text{ $A \subset \bigcup E_i$ and $\diam E_i < \delta$ for $i \in \N$ }
    \right\}.
\end{align*}
Here $\omega_n$ is the Lebesgue measure of the $n$-dimensional Euclidean unit ball. This normalization guarantees, in particular, that $\mathcal{H}^n$ coincides with the $n$-dimensional Lebesgue measure $\lm^{n}$ in the $n$-dimensional Euclidean space $\mathbb{R}^n$.

\subsection{Non-principal ultrafilters}\label{section-ultrafilters}
We review standard definitions related to a \emph{non-principal ultrafilter} $\omega$ on $\N$. First, recall that a non-principal ultrafilter is a finitely additive measure $\omega \colon \mathcal{P}( \mathbb{N} ) \rightarrow \{0,1\}$ on the power set $\mathcal{P}( \mathbb{N} )$ of the natural numbers $\mathbb{N}$ giving zero measure to finite sets and measure one to $\N$.

Given a bounded sequence $( r_m )$ of real numbers, we say that $t \in \mathbb{R}$ is the \emph{ultralimit} if
\begin{equation*}
    \omega\left(
    \left\{
        m \in \mathbb{N}
        \colon
        | r_m - t |
        <
        \varepsilon
    \right\}
    \right)
    =
    1
    \quad\text{for every $\varepsilon > 0$.}
\end{equation*}
The ultralimit of a bounded sequence always exists and is unique, justifying the notation $\lim_\omega r_m$. For bounded sequences in $\R^n$ we similarly may take componentwise ultralimits. This allows us to take ultralimits of a sequence $(E_m)$ of non-empty subsets of $\Omega$ when $\Omega \subset \R^n$ is bounded: We denote
\begin{align*}
    E_\omega = \{ \lim\nolimits_{\omega} x_m \in \R^n \colon \text{$x_m \in E_m$ for every $m \in \N$} \}
\end{align*}
and call $E_\omega$ the ultralimit of $(E_m)$.

We will later need the following semicontinuity result for the measure under ultralimits.
\begin{lemma}\label{lem:bound-ultralimit-set}
Let $(E_m)$ be a sequence of non-empty measurable subsets of a bounded open set $\Omega \subset \R^n$. Then the ultralimit $E_\omega$ is a non-empty compact subset of $\overline{\Omega}$ and satisfies $$\lm^n(\Omega\cap E_\omega) \geq \lim\nolimits_\omega \lm^n(E_m).$$
\end{lemma}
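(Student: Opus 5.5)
We argue directly from the definition of $E_\omega$, separating the topological claims from the measure estimate.

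\emph{Non-emptiness and compactness.} Non-emptiness is immediate: pick $x_m\in E_m$. The sequence lies in the bounded set $\Omega$, so $\lim_\omega x_m$ exists and lies in $E_\omega\cap\overline{\Omega}$. Every ultralimit of points of $\Omega$ lies in $\overline{\Omega}$, hence $E_\omega\subset\overline\Omega$ is bounded. For closedness, let $y_j=\lim_\omega x^j_m\in E_\omega$ converge to $y$. Put
\[
A_j=\{m: |x^j_m-y_j|<2^{-j}\},
\]
so that $\omega(A_j)=1$. For each $m$, let $j(m)$ be the largest $j\le m$ with $m\in A_1\cap\dots\cap A_j$ (and $j(m)=1$ if there is none), and set $x_m=x^{j(m)}_m\in E_m$. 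For fixed $J$ we have $\omega(\{m\ge J\}\cap A_1\cap\dots\cap A_J)=1$, and on this set $j(m)\ge J$ and $|x_m-y_{j(m)}|<2^{-j(m)}$. Hence $|x_m-y|\le 2^{-J}+\sup_{j\ge J}|y_j-y|$ for $\omega$-a.e.\ $m$, so $\lim_\omega x_m=y$ and $y\in E_\omega$. This diagonal argument is the step needing care; the rest is routine. Compactness now follows from Heine--Borel.

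\emph{Measure inequality.} Fix $\varepsilon>0$ and consider the open neighbourhood $U_\varepsilon=\{x:\operatorname{dist}(x,E_\omega)<\varepsilon\}$. We claim that $E_m\subset U_\varepsilon$ for $\omega$-a.e.\ $m$. Suppose not. Then $\omega(B)=1$ for $B=\{m:E_m\not\subset U_\varepsilon\}$. Choose $x_m\in E_m\setminus U_\varepsilon$ for $m\in B$ and $x_m\in E_m$ arbitrary otherwise. The point $\lim_\omega x_m$ lies in $E_\omega$, yet it also lies in the closed set $\R^n\setminus U_\varepsilon$, since $x_m$ belongs to that closed set for $\omega$-a.e.\ $m$. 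This is a contradiction.

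It follows that $\lim_\omega\lm^n(E_m)\le\lm^n(\Omega\cap U_\varepsilon)$. Letting $\varepsilon\to0$ and using that $E_\omega$ is compact, $\bigcap_\varepsilon U_\varepsilon=E_\omega$. The sets $\Omega\cap U_\varepsilon$ are decreasing and of finite measure, so continuity from above gives $\lm^n(\Omega\cap U_\varepsilon)\to\lm^n(\Omega\cap E_\omega)$. This proves the inequality. Measurability of $E_m$ is only used so that $\lm^n(E_m)$ makes sense.
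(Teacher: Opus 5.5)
Your proof is correct, but it takes a different route from the paper. The paper argues measure-theoretically. It takes a weak-$*$ ultralimit $g_\omega\in L^\infty(\R^n)=(L^1(\R^n))^*$ of the characteristic functions $\chi_{E_m}$ and notes that $0\le g_\omega\le 1$, hence $\lm^n(\Omega\cap\{g_\omega>0\})\ge\int_\Omega g_\omega\,dz=\lim\nolimits_\omega\lm^n(E_m)$. It then shows that every Lebesgue density point $x$ of $\Omega\cap\{g_\omega>0\}$ lies in $E_\omega$: for $\omega$-a.e.\ $m$ one has $\lm^n(E_m\cap B(x,k^{-1}))>0$, and a diagonal choice of points $x_m\in E_m\cap B(x,k^{-1})$ gives $x=\lim\nolimits_\omega x_m$.

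You instead argue topologically. You show that $E_m\subset U_\varepsilon$ for $\omega$-a.e.\ $m$, which is one half of $\omega$-Hausdorff convergence of the $E_m$ to $E_\omega$, and you conclude by continuity from above. This is the familiar upper semicontinuity of Lebesgue measure under Hausdorff convergence to a compact set. Your version is more elementary, since it uses no duality or Banach--Alaoglu and no density theorem. It also actually proves the closedness of $E_\omega$, which the paper dismisses as clear. As you observe, it works verbatim with outer measure, so measurability of the $E_m$ plays no real role.

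The paper's argument, in turn, produces the intermediate set $\{g_\omega>0\}$. This set depends only on the $\lm^n$-equivalence classes of the $E_m$, and it is almost everywhere contained in the ultralimit of any null-set modification of the $E_m$; your neighbourhood argument sees the $E_m$ only as point sets. For the lemma as stated, both arguments are complete.
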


\begin{proof}
Clearly $E_\omega$ is a non-empty compact subset of $\overline{\Omega}$ so it suffices to prove the measure lower bound for $\Omega \cap E_\omega$.
%
%
Observe that the sequence of characteristic functions $g_m = \chi_{ E_m }$ forms a bounded sequence in $L^{\infty}( \mathbb{R}^n )$ and as $L^{\infty}( \mathbb{R}^n )$ is the dual of $L^{1}( \mathbb{R}^n )$, there exists $g_\omega \in L^{\infty}( \mathbb{R}^n )$ such that
\begin{align*}
    \int_{ \mathbb{R}^n } h g_\omega \,dz = \lim\nolimits_{\omega} \int_{ \mathbb{R}^n } h g_m \,dz
    \quad\text{for every $h \in L^{1}( \mathbb{R}^n )$.}
\end{align*}
In particular, if we apply this equality to nonnegative $h$, it immediately follows that $0 \leq g_\omega \leq 1$ almost everywhere. We conclude that
\begin{align*}
    \lm^n( \Omega \cap \left\{ g_\omega > 0 \right\} )
    \geq
    \lim\nolimits_{\omega} \int_{ \Omega } g_m \,dz
    =
    \lim\nolimits_{\omega} \lm^n( E_m ).
\end{align*}
If $x \in \Omega \cap \left\{ g_\omega > 0 \right\}$ is a Lebesgue density point, we claim, in fact, that $x \in E_\omega$. This will finish the proof. Let
\begin{align*}
    \mathcal{G}_k = \{ m \in \N \colon \lm^{n}( E_m \cap B( x, k^{-1} ) ) > 0 \} \cap [k,\infty).
\end{align*}
It holds that $\mathcal{G}_{k} \supset \mathcal{G}_{k+1}$ and $\omega( \mathcal{G}_k ) = 1$ for $k \in \N$ by construction. To finish, let $x_m \in E_m$ for $m \in \N \setminus \mathcal{G}_1$ and $x_m \in E_m \cap B( x, k^{-1} )$ if $m \in \mathcal{G}_{k} \setminus \mathcal{G}_{k+1}$. Since $x = \lim\nolimits_{\omega} x_m$, the proof is complete.
\end{proof}

\subsection{Ultralimits of spaces}\label{section-ultralimits-of-spaces}
In this subsection, we consider a sequence $( X_m, d_m, p_m )$ of pointed metric spaces. A sequence $(x_m)$, for $x_m \in X_m$, is \emph{bounded} if
\begin{align*}
    \sup_{ m \in \mathbb{N} } d_m( x_m, p_m ) < \infty.
\end{align*}
We say that two bounded sequences $(x_m)$ and $(y_m)$ are \emph{equivalent} if
\begin{equation*}
    \lim\nolimits_{\omega} d_m( x_m, y_m ) = 0.
\end{equation*}
The equivalence class of a bounded sequence $(x_m)$ is denoted by $[ x_m ]$ and the collection of such equivalence classes we denote by $X_\omega$. We refer to $[x_m]$ as the \emph{ultralimit} of a bounded sequence $( x_m )$. We at times use the notation $\lim\nolimits_{\omega} x_m$ to refer to $[x_m]$ as well.

Observe that
\begin{equation*}
    d_\omega( [ x_m ], [ y_m ] )
    \coloneqq
    \lim\nolimits_{\omega} d_{m}( x_m, y_m )
\end{equation*}
defines a distance function on $X_\omega$. We also denote $p_\omega \coloneqq [ p_m ]$. We say that the triple $( X_\omega, d_\omega, p_\omega )$ is the \emph{$\omega$-ultralimit}, or the \emph{ultralimit}, of the pointed metric spaces $( X_m, d_m, p_m )$. We often use the shorthand notation $X_\omega$ to refer to $( X_\omega, d_\omega, p_\omega )$. We note that $X_\omega$ is \emph{always} complete. Moreover, if $( X_m, d_m, p_m )$ are length spaces, then $X_\omega$ is geodesic. When we consider the constant sequence $( X, d, p )$, the ultralimit $X_\omega$ is called an \emph{ultracompletion} of $X$. In this case, we have the canonical isometric embedding $X \xhookrightarrow{} X_\omega$ where $x \mapsto [x]$.

In case $( X_m, d_m, p_m ) = ( X, \lambda_m d, p_m )$ for $\lambda_m \rightarrow 0^{+}$, the ultralimit $X_\omega$ is called an \emph{asymptotic cone} of $X$. If instead $p_m = p$ and $\lambda_m \rightarrow \infty$, then $X_\omega$ is a \emph{tangent (cone)} of $X$ (at the point $p$).

We note that if pointed metric spaces $( X_m, d_m, p_m )$ converge to a proper $( Z, d, p )$ in pointed Gromov--Hausdorff topology, then the ultralimit $X_\omega$ is isometric to $( Z, d, p )$, cf. \cite[Theorem 5.16]{alexander-kapovitch-petrunin-2024-alexandrov-geometry-foundations}.
%

\subsection{Ultralimits of mappings}\label{section-ultralimits-of-mappings}
We also consider ultralimits of mappings. Consider sequences of pointed metric spaces $( X_m, d_m, p_m )$ and $( Y_m, d_m, q_m )$.
\begin{enumerate}
    \item A sequence $( f_m \colon (X_m,d_m,p_m) \to ( Y_m, d_m, q_m ) )$ is \emph{equi-continuous on bounded sets} if for every $R > 0$ and $\varepsilon > 0$, there exists $\delta > 0$ such that $d_m( x, y ) < \delta$ implies $d_m( f_m(x), f_m(y) ) < \varepsilon$ whenever $x, y \in B_{X_m}( p_m, R )$ and $m \in \N$.  In case we may take $R = \infty$, the sequence is \emph{equi-continuous}.
    \item A sequence $( f_m \colon (X_m,d_m,p_m) \to ( Y_m, d_m, q_m ) )$ is \emph{equi-bounded on bounded sets} if for every $R > 0$, there exists $\lambda > 0$ such that $f_m( \overline{B}_{X_m}( p_m, R ) ) \subset \overline{B}_{Y_m}( q_m, \lambda )$ for every $m \in \N$. In case we may take $R = \infty$, the sequence is \emph{equi-bounded}.
\end{enumerate}
Now, in case a sequence $( f_m \colon (X_m,d_m,p_m) \to ( Y_m, d_m, q_m ) )$ is equi-continuous and equi-bounded on bounded sets, there exists a \emph{pointwise ultralimit} $f_\omega \colon X_\omega \to Y_\omega$ defined by $f_\omega( \lim_\omega x_m ) = \lim_\omega f_m(x_m)$ (or equivalently $f_\omega( [x_m] ) = [f_m(x_m)]$). The construction of $f_\omega$ implies that $f_\omega$ maps bounded sets to bounded sets and it has the same modulus of continuity as $( f_m )$. Moreover, if $( X_m, d_m, p_m ) = ( X, d, p )$, then $f_\omega(x) = [f_m(x)]$ holds for $x \in X$ under the canonical identification $X \subset X_\omega$ given by $x \mapsto [x]$.

\subsection{Injective metric spaces}\label{section-injective-metric-spaces}
We will need injective metric spaces in the sequel. Recall that a metric space $Y$ is \emph{injective} if for every subset of a metric space $E \subset Z$, any $1$-Lipschitz map $E \to Y$ extends to a $1$-Lipschitz map $Z \to Y$. In particular, every injective metric space is complete. Also recall that via the Kuratowski embedding, every metric space $X$ embeds into the injective Banach space $\ell^{\infty}(X)$ isometrically; here $\ell^{\infty}(X)$ is the space of bounded functions $X \to \mathbb{R}$ equipped with the supremum norm. The injectivity of $\ell^{\infty}(X)$ is easy to see using componentwise McShane extension. Recall that the $\lambda$-Lipschitz McShane extension $\hat{f} \colon Z \to \R$ of a $\lambda$-Lipschitz $f \colon Z \to \R$ is defined by $\hat{f}(z) = \inf\{ f(y) + \lambda d(y,z) \colon y \in E \}$. Then the truncated McShane extension $\bar{f} \colon Z \to \R$, where $\bar{f}(z) = \max\{ \sup_{y \in E} |f(y)|, \hat{f}(z) \}$, leads to a $\lambda$-Lipschitz extension $\bar{f} \colon Z \to \R$ with $\sup_{z \in Z} |\bar{f}(z)| = \sup_{z \in E} |f(z)| \in [0,\infty]$. This implies the standard fact that any closed ball in any injective space is injective.

An \emph{injective hull} $Y$ of a metric space $X$ is an injective metric space equipped with an isometric embedding $e \colon X \to Y$ that is minimal in an appropriate sense among injective spaces containing $X$. Injective hulls were constructed by Isbell in \cite{isbell-1964-six-theorems-about-injective-metric-spaces}.

\section{Metric valued Sobolev mappings}\label{sec:sobolevmaps}
We briefly review metric valued Sobolev theory for domains in Euclidean spaces. There exist several equivalent definitions and we refer to the monograph \cite{Hei:Kos:Sha:Ty:15} and the recent manuscripts \cite{Lyt:Wen:17:areamini,Creu:Evs:24} for details. See also \cite{Kor:Sch:93,Haj:96,Sha:00,Hei:Kos:Sha:Ty:01}. We will use the approach due to Reshetnyak \cite{Res:97} based on post composition by Lipschitz functions.

\subsection{Sobolev spaces due to Reshetnyak}
Let $X$ be a complete metric space, $p \in (1,\infty)$ and let $\Omega \subset \mathbb{R}^n$ be a bounded open set. We denote by $L^{p}( \Omega, X )$ the set of measurable and essentially separably valued $u \colon \Omega \to X$ such that $d(u,p_X) \in L^{p}( \Omega )$ for some (and thus any) $p_X \in X$, the classical Lebesgue space of $p$-integrable functions. The distance on $L^p( \Omega, X )$ is defined by
\begin{align*}
    \| d(u,v) \|_{L^{p}(\Omega)} = \left( \int_{\Omega} ( d(u,v) )^p \,dz \right)^{ \frac{1}{p} },
    \quad\text{for $u,v \in L^{p}( \Omega, X )$.}
\end{align*}
Since $X$ is complete, the space $L^p( \Omega, X )$ becomes a complete metric space.

A map $u \in L^{p}( \Omega, X )$ belongs to the Sobolev space $u \in W^{1,p}( \Omega, X )$ if there exists an $L^{p}( \Omega )$-integrable $\rho \colon \Omega \rightarrow \R$ such that for every $1$-Lipschitz $\varphi \colon X \rightarrow \mathbb{R}$, it holds that $x \mapsto \varphi \circ u(x)$ belongs to $W^{1,p}( \Omega )$ in the classical sense and the distributional gradient satisfies $|\nabla( \varphi \circ u ) | \leq \rho$ almost everywhere. The \emph{Reshetnyak $p$-energy} of $u \in W^{1,p}( \Omega, X )$ is defined by
\begin{align*}
    E_{+}^{p}( u ) = \inf\left\{ \| \rho \|_{ L^{p}(\Omega) }^p \colon \rho \text{ is as above} \right\}.
\end{align*}
The $p$-energy is extended as $\infty$ to all of $L^{p}( \Omega, X )$ after which $E_{+}^{p} \colon L^{p}( \Omega, X ) \rightarrow [0,\infty]$ becomes lower semi-continuous with respect to $L^{p}( \Omega, X )$-convergence. In case $E_+^{p}(u) < \infty$, standard arguments show that the infimum above is a minimum and the minimizer is unique almost everywhere. We denote the minimizer by $|\nabla u|$. For real-valued functions, $|\nabla u|$ is the modulus of the weak differential.

\subsection{Lower semi-continuous energies}\label{section-lower-semi-continuous-energies}
In the sequel, we consider lower semi-continuous energies. We say that an energy functional $E^p$ is \emph{lower semi-continuous} if for every complete metric space $X$ and bounded Lipschitz domain $\Omega \subset \mathbb{R}^n$, any $p$-bounded $( u_m \colon \Omega \to X )$ with $\lim_{ m \to \infty } \| d( u_m, u ) \|_{ L^{p}(\Omega) } = 0$ satisfies
\begin{align*}
    \liminf_{ m \to \infty } E^{p}( u_m ) \geq E^{p}( u ).
\end{align*}
The standard energies such as the Korevaar--Schoen energy, the Reshetnayk energy, and the Haj{\l}asz energy are lower semi-continuous in this sense, cf. \cite[Section 5]{Lyt:Wen:17:areamini}. We also recall that the Newton--Sobolev energy coincides with the Reshetnyak energy on all domains $\Omega \subset \mathbb{R}^n$, and all of these approaches define comparable energies for bounded Lipschitz domains $\Omega \subset \mathbb{R}^n$, cf. \cite[Theorems 6.1.17 and 7.1.20 and Section 10]{Hei:Kos:Sha:Ty:15}. 

\subsection{Haj{\l}asz gradients}
When $\Omega \subset \R^n$ is a bounded Lipschitz domain, Reshetnyak's definition of a Sobolev map above is equivalent to that of a Haj{\l}asz--Sobolev map (see \cite{Haj:96} or \cite[Section 10]{Hei:Kos:Sha:Ty:15}) and thus, for every $u \in W^{1,p}( \Omega, X )$, there exists a nonnegative function $g \in L^{p}( \Omega )$ and a negligible set $N \subset \Omega$ such that
\begin{align}\label{eq:hajlaszgradient}
    d(u(x),u(y)) \leq ( g(x) + g(y) ) d(x,y) \quad\text{for every $x, y \in \Omega \setminus N$.}
\end{align}
We call $g$ a \emph{Haj{\l}asz gradient} of $u$. For convenience sake, we will assume that our Haj{\l}asz gradients are set to infinity in the set where \eqref{eq:hajlaszgradient} would fail otherwise. Thus \eqref{eq:hajlaszgradient} holds for \emph{every} pair $x,y \in \Omega$. It holds that for $u \in W^{1,p}( \Omega, X )$, there exists a Haj{\l}asz gradient $g$ of $u$ satisfying
\begin{equation}\label{eq:normestimate:hajlasz:sobolev}
    \| g \|_{ L^{p}(\Omega) }^p \leq C E^{p}_{+}( u )
    \quad\text{for every $u \in W^{1,p}( \Omega, X )$}
\end{equation}
for a constant $C$ depending only on $\Omega$ and $p$.

\subsection{Trace theory}\label{section-trace-theory}
When $\Omega \subset \mathbb{R}^n$ is a bounded Lipschitz domain, the \emph{trace} of $u \in W^{1,p}( \Omega, X )$ is a mapping $h \in L^{p}( \partial \Omega, X )$, the definition of $L^{p}( \partial \Omega, X )$ being analogous to $L^{p}( \Omega, X )$ above, such that
\begin{equation*}
    \limsup_{ r \rightarrow 0^+ }
    \aint{ B(x,r) \cap \Omega } d( u(z), h(x) ) \,dz
    =
    0
    \quad\text{for $\mathcal{H}^{n-1}$-a.e. $x \in \partial \Omega$,}
\end{equation*}
where $\aint{ E } h \,dz$ refers to the integral average over a set $E$. It is clear that if $h$ exists, it is unique and thus we use the notation $\mathrm{tr}(u) = h$. The trace above is equivalent to the Korevaar--Schoen definition \cite[Section 1.12]{Kor:Sch:93}, cf. \cite[Section 3]{guo-huang-wang-xu-2022-p-harmonic-mappings-between-metric-spaces}. We state some basic properties of traces. We note that the uniqueness implies the composition rules: $\Phi \circ \trace(u)(x) = \trace( \Phi \circ u )(x)$ and $d( \trace(u)(x), \trace(v)(x) ) = \trace( d(u,v) )(x)$ for almost every $x \in \partial \Omega$ for $u,v \in W^{1,p}( \Omega, X )$ and Lipschitz $\Phi \colon X \to Y$. The trace $\trace \colon W^{1,p}( \Omega, X ) \to L^{p}( \partial \Omega, X )$ for $X = ( X, d, p_X )$ is bounded in the sense that
\begin{equation*}
    \| d( \trace(u), p_X ) \|_{ L^{p}(\Omega) } \leq C\left( \| d( u, p_X ) \|_{L^{p}(\Omega)}^{p} + E_{+}^{p}(u) \right)^{1/p} 
\end{equation*}
for some $C = C(p, \Omega)$. Finally,
\begin{equation*}
    \lim_{m\to\infty}\| d_m( \trace(u_m), \trace(v_m) ) \|_{ L^{p}(\partial \Omega) } = 0
\end{equation*}
if $\lim_{m \to \infty} \| d_m( u_m, v_m ) \|_{ L^{p}(\Omega) } = 0$ and $( u_m \colon \Omega \to X_m )$ and $( v_m \colon \Omega \to X_m )$ are $p$-bounded. This follows from the $p$-boundedness of $( d(u_m,v_m) )$ and weak continuity of $\trace\colon W^{1,p}(\Omega) \to L^{p}( \partial \Omega )$.

\subsection{The Hausdorff volume and the area formula}\label{section-parametrized-volume}
We next define a notion of parametrized Hausdorff volume for Sobolev mappings. To this end, let $u \in W^{1,p}( \Omega, X )$ for $p \in (1,\infty)$. Such a $u$ has an \emph{approximate metric derivative} at almost every $z \in \Omega$ in the following sense; see \cite{Karm:07,Lyt:Wen:17:areamini}. There exists a unique seminorm in $\mathbb{R}^n$, denoted $\mathrm{md}_z(u)$, such that
\begin{align*}
    \mathrm{ap}\lim_{ z ' \rightarrow z }
    \frac{ d( u(z), u(z') ) - \mathrm{md}_z(u)( z'-z ) }{ |z'-z| }
    =
    0
\end{align*}
where $| \cdot|$ is the Euclidean norm and $\mathrm{ap}\lim$ refers to the approximate limit; see \cite[2.9.11]{Fed:69}. 
\begin{definition}\label{def:parametrizedvolume}
When $p \in [n,\infty)$ and $u \in W^{1,p}( \Omega, X )$, the parametrized Hausdorff volume of a map $u \in W^{1,p}( \Omega, X)$ is
\begin{align*}
    \mathrm{Vol}( u ) = \int_{ \Omega } \mathbf{J}( \mathrm{md}_z(u)) \,dz,
\end{align*}
where the Jacobian $\mathbf{J}(s)$ of a seminorm $s$ is the $n$-dimensional Hausdorff measure of the Euclidean unit cube in $( \mathbb{R}^n, s )$ if $s$ is a norm and zero otherwise. When the ambient dimension $n$ is two, we instead use the notation $\mathrm{Area}(u)$ and refer to it as parametrized Hausdorff area.
\end{definition}
Recall that the $n$-dimensional Hausdorff measure is normalized so that the $n$-dimensional Hausdorff measure of the standard Euclidean norm and Lebesgue measure coincide. Thus an equivalent definition of the Jacobian is $\omega_n/ \lm^{n}( \{ s \leq 1 \} )$ where $\omega_n$ is the Lebesgue measure of the $n$-dimensional ball and $\{ s \leq 1 \}$ is the unit ball defined by the norm $s$.

If $u \in W^{1,p}( \Omega, X )$ satisfies Lusin's Property ($N$), e.g. when $p > n$ \cite{Vod:00}, and thus sends sets of Lebesgue measure zero to sets of Hausdorff $n$-dimensional measure zero, then
\begin{align*}
    \mathrm{Vol}(u) = \int_{X} \sharp( \left\{ z \colon u(z) =x \right\} ) \,d\mathcal{H}^n(x)
\end{align*}
by the area formula \cite[Theorem 3.2]{Karm:07}. If $u$ is also injective, it holds that $\mathrm{Vol}(u) = \mathcal{H}^{n}( u(\Omega) )$. Nevertheless, even if $p = n$, there exists a set $N \subset \Omega$ of Lebesgue measure zero such that $u|_{ \Omega \setminus N }$ satisfies Lusin's Condition ($N$) and
\begin{align}\label{eq:areaformula:Sobolev}
    \int_{E} \mathbf{J}( \mathrm{md}_z(u) ) \,dz
    =
    \int_{X \setminus u(N)} \sharp( \left\{ z \in E \colon u(z) = x \right\} ) \,d\mathcal{H}^n(x)
\end{align}
for Lebesgue measurable $E \subset \Omega$. It follows from \eqref{eq:areaformula:Sobolev} that $u|_{ \{ \mathbf{J}( \mathrm{md}(u)) > 0 \} \setminus N}$ satisfies Lusin's Conditions $(N)$ and $(N^{-1})$.

\subsection{Lower semi-continuous volumes}\label{section-lower-semi-continuous-volumes}
Important examples of volumes arise in convex geometry \cite{alvarez-thompson-2004-volumes-on-normed-and-finsler-spaces}: a \emph{volume} is a function $\mu$ which assigns to every $n$-dimensional normed space $V$ a norm $\mu_V$ on the exterior power $\Lambda^n V$ in such a way that $\mu_V$ is induced by the Lebesgue measure if $V$ is Euclidean and such that for every linear $1$-Lipschitz map $T \colon V \to W$ between $n$-dimensional normed spaces, the induced map $T_* \colon \Lambda^n V \to \Lambda^n W$ is $1$-Lipschitz. Following \cite{Lyt:Wen:17:areamini}, we define the Jacobian $\mathbf{J}^{\mu}$ for a seminorm $s$ on $\mathbb{R}^n$ by
\begin{equation*}
    \mathbf{J}^{\mu}_{n}(s)
    =
    \left\{
    \begin{split}
        &\mu_{ ( \mathbb{R}^n, s ) }( e_1 \wedge \dots \wedge e_n ), \quad&&\text{if $s$ is a norm}
        \\
        &0, \quad&&\text{if $s$ is a seminorm,}
    \end{split}
    \right.
\end{equation*}
where $e_1, \dots, e_n$ is the standard basis in $\R^n$. The associated parametrized volume of a Sobolev map $u \in W^{1,p}( \Omega, X )$, for $p \geq n$, is
\begin{align*}
    \mathrm{Vol}^\mu( u ) \coloneqq \int_{\Omega} \mathbf{J}^{\mu}_n( \mathrm{md}_z(u) ) \,dz.
\end{align*}
When $n = 2$, we instead write $\mathrm{Area}^{\mu}(u)$ and $\mathbf{J}^{\mu}$.

We say that a volume functional $\mathrm{Vol}$ is \emph{lower semi-continuous} if for $p \in [n,\infty]$ and every complete metric space $X$, any bounded domain $\Omega \subset \mathbb{R}^n$, and any $p$-bounded sequence $( u_m \colon \Omega \to X )$ with $\lim_{ m \to \infty } \| d( u_m, u ) \|_{ L^{p}(\Omega) } = 0$, it holds that
\begin{align*}
    \liminf_{ m \to \infty } \mathrm{Vol}( u_m ) \geq \mathrm{Vol}( u ).
\end{align*}
Note that the lower semicontinuity for $p > n$ follows from the lower semicontinuity in the case $p = n$.

The parametrized volume $\mathrm{Vol}^{\mu}$ is lower semicontinuous if (and only if) $\mu$ induces quasi-convex $n$-volume densities; see \cite[Definition 2.6 and Corollary 5.8]{Lyt:Wen:17:areamini}. We call $\mu$ \emph{lower semi-continuous} if $\mathrm{Vol}^{\mu}$ is lower semi-continuous in the sense above. When $n = 2$, we instead use the term \emph{area}. In particular, the parametrized Hausdorff area is lower semi-continuous by \cite[Theorem 1]{burago-ivanov-2012-minimality-of-planes-in-normed-spaces}.

We give additional comments on the areas and refer to \cite[Section 2.4]{Lyt:Wen:17:energyarea} for further discussion. We will later need another notion of area, called the \emph{Riemann inscribed area} (due to Ivanov \cite{ivanov-2008-volumes-and-areas-lipschitz-metrics}): The $\mu^{i}$-Jacobian of a norm $s$ on $\mathbb{R}^2$ satisfies
\begin{align*}
    \mathbf{J}^{\mu^{i}}(s)
    =
    \frac{ \pi }{ \lm^{2}(L) }
\end{align*}
where $\lm^{2}(L)$ is the Lebesgue measure of the ellipse of maximal area contained inside the unit ball $\{s \leq 1\}$. That is, the Lebesgue area of the \emph{John ellipse} of $\{ s \leq 1 \}$. We note that
\begin{align*}
    \frac{\pi}{4} \mathbf{J}^{\mu^{i}}(s) \leq \mathbf{J}(s) \leq \mathbf{J}^{\mu^{i}}(s)
    \quad\text{for every seminorm in $s$.}
\end{align*}
In fact, the right-hand side is an equality if and only if $s$ is not a norm or it is induced by an inner product. 

In addition to $\mu^{i}$, other prominent lower semi-continuous areas include the Holmes--Thompson area $\mu^{ht}$, Benson's area $\mu^{*}$ (also known as Gromov's $\text{mass}^{*}$), and the Busemann--Hausdorff area $\mu^{bh}$. The latter one defines the same Jacobian as the parametrized Hausdorff area. The main inequalities are $\mu^{bh} \geq \mu^{ht}$ and $\mu^{*} \geq \mu^{ht}$ both of which are strict outside inner product spaces. We remark that the Jacobians of every notion of area $\mu$ satisfies $2^{-1}\mathbf{J}^{\mu^{i}} \leq \mathbf{J}^\mu \leq \mathbf{J}^{ \mu^{i} }$.

\section{Proof of \Cref{thm:embedding-mapping-packages}}\label{section-mapping-package}
We recall the statement of the embedding theorem. Below $\omega$ will denote a non-principal ultrafilter on $\N$.
\GromovEmbedding*

Before proceeding with the proof, we fix notation. We lose no generality in supposing that $I = \mathbb{N}$. Let $( \varepsilon_N )$ be a decreasing sequence of positive numbers with $\sum \varepsilon_N  < \infty$, and let $D^k = \{ y_{j}^{k} \colon j \in \mathbb{N} \} \subset Y^k$ be a countable dense set for $k \in \N$. Let also $X_\omega$ be the ultralimit of $( X_m, d_m, p_m )$ and let $f_\omega^{k} \colon Y^k \to X_\omega$ be the ultralimit of $( f_m^k \colon Y^k \to X_m )$ for $k \in \N$.

The construction of the space $Z$ is fairly standard in the context of related compactness theorems; see e.g. \cite[Proposition 5.2]{wenger-2011-compactness-for-manifolds-and-integral-currents-with-bounded-diameter-and-volume}. However, to guarantee compatibility with the ultralimit requires further care. To begin the proof, we will consider a decreasing family of sets $\mathcal{G}_N$ with $\omega( \mathcal{G}_N ) = 1$ for $N \geq 1$ with the following requirements: $m \in \mathcal{G}_N$ if and only if
\begin{enumerate}
    \item for $1 \leq j, j', k, k' \leq N$, it holds that
    \begin{align*}
        | d_m( f_{m}^{k}( y_{j}^{k} ), f_{m}^{k'}( y_{j'}^{k'} ) ) - d_\omega( f_\omega^{k}( y_{j}^{k} ), f_{\omega}^{k'}( y_{j'}^{k'} ) ) | < \varepsilon_{N}.
    \end{align*}
    \item $N \leq m$.
\end{enumerate}
We have $\mathcal{G}_{N+1} \subset \mathcal{G}_N$ for $N \in \N$ and $\bigcap \mathcal{G}_N = \emptyset$. Moreover, $\omega( \mathcal{G}_{N} ) = 1$ for $N \geq 1$ since $\mathcal{G}_N$ can be expressed as a finite intersection of full $\omega$-measure sets.

We now proceed with the construction of the sequence $( m_l )$. To this end, for each $m \in \mathcal{G}_1$, let $N = N(m)$ be the largest integer such that $m \in \mathcal{G}_N$. Next, let $m_1$ be an element in $\mathcal{G}_1$ and let $N_1 \coloneqq N(m_1)$. Suppose that $m_{l-1}$ and $N_{l-1}$ have been defined for some $l \geq 2$ and then consider any element $m_l \in \mathcal{G}_{ m_{l-1}+1 }$, and denote $N_l = N(m_l)$. It follows that $( N_l )$ and $( m_l )$ are strictly increasing and $N_l \leq m_l$.

We are ready to construct the space $Z$ via a direct limit construction. To this end, let $Z^1 = X_{m_1}$ and let $d^{1} \coloneqq d_{ m_1 }$. Suppose that $Z^{l-1}$ and $d^{l-1}$ have been defined for some $l \geq 2$. Next, consider the disjoint union $Z^{l} = Z^{l-1} \coprod X_{ m_{l} }$ and define a distance $d^{l}$ as follows. In $Z^{l-1}$, the distance restricts to $d^{l-1}$ and on $X_{m_l}$ the distance restricts to $d_{ m_l }$. For points $(x,x') \in Z^{l-1} \times X_{ m_l }$, we define
\begin{align*}
    d^{l}(x',x)
    \coloneqq
    d^{l}(x,x')
    \coloneqq
    \min\left\{ d^{l-1}(x, f_{m_{l-1}}^{k}( y^{k}_j ) ) + ( \varepsilon_{N_{l-1}} + \varepsilon_{ N_{l} } ) + d_{m_l}( f_{m_{l}}^{k}( y^{k}_j ), x' ) \right\},
\end{align*}
where the minimum is taken over $1 \leq j, k \leq N_{ l-1 }$. Regarding triangle inequality, by applying (1) for $m_{l}$ and $m_{l-1}$, respectively, we obtain that
\begin{align*}
    | d_{m_l}( f_{m_l}^{k}(y^{k}_j), f_{m_l}( y^{k'}_{j'} )) - d_{m_{l-1}}( f_{m_{l-1}}^{k}(y^{k}_j), f_{m_{l-1}}( y^{k'}_{j'} ) ) |
    <
    ( \varepsilon_{N_{l-1}} + \varepsilon_{ N_{l} } )
\end{align*}
when $1 \leq j,j',k, k' \leq N_{l-1}$. This and the recursive assumption on $d^{l-1}$ easily implies the triangle inequality for $d^{l}$. It follows that $d^l$ is an actual distance on $Z^l$ with the desired properties. Next, we define $Z$ as the completion of the direct limit of the spaces $( Z^{l}, d^{l} )$ and we denote the distance on $Z$ by $d$ for simplicity. We also suppress the notation for the isometric embeddings $X_{ m_l } \xhookrightarrow{} Z$.

Let $k \in \mathbb{N}$. We claim that for $y \in Y^k$, the sequence $( f_{m_l}^k(y) )$ is a Cauchy sequence, whose limit we denote by $f_\infty^{k}(y)$. This defines implicitly a map $f_\infty^k \colon Y^k \to Z$. We first consider the case $y = y_{j}^k$ for some $j \in \N$. To this end, let $l_0 \in \mathbb{N}$ such that $1 \leq j, k \leq N_{l_0-1}$. When $l \geq l_0$, we have that
\begin{align*}
    d( f_{ m_l }^{k}( y_{j}^{k} ), f_{ m_{l+1} }^{k}( y_{j}^{k} ) )
    =
    ( \varepsilon_{N_{l}} + \varepsilon_{ N_{l+1} } )
\end{align*}
so
\begin{align*}
    \sum_{ l = l_{1} }^{ l_{2} } d( f_{ m_l }^{k}( y_{j}^{k} ), f_{ m_{l+1} }^{k}( y_{j}^{k} ) )
    \leq
    2
    \sum_{ l = N_{ l_1 } }^{ \infty } \varepsilon_{ l }
    \quad\text{for $l_2 \geq l_1 \geq l_0$}.
\end{align*}
The right-hand side converges to zero as $l_1,l_2 \rightarrow \infty$ so $( f_{m_l}^k(y) )$ is Cauchy. The general case follows by equicontinuity and density of $D^{k} \subset Y^k$.

Next, given $j, j', k, k' \in \mathbb{N}$, property (1) in the construction guarantees that
\begin{align*}
    d( f_{\infty}^{k}( y_{j}^{k} ), f_{\infty}^{k'}( y_{j'}^{k'} ) )
    =
    \lim_{ l \rightarrow \infty }
    d( f_{m_l}^{k}( y_{j}^{k} ), f_{m_l}^{k'}( y_{j'}^{k'} ) )
    =
    d_\omega( f_{\omega}^{k}( y_{j}^k ), f_{\omega}^{k'}( y_{j'}^{k'} ) ).
\end{align*}
By equicontinuity, this easily extends to
\begin{align*}
    d( f_{\infty}^k(y), f_{\infty}^{k'}(y') ) = d_{\omega}( f_{\omega}^{k}(y), f_{\omega}^{k'}(y') )
    \quad\text{for $(y,y') \in Y^k \times Y^{k'}$ and $k,k' \in \N$.}
\end{align*}
We conclude that there exists a uniquely defined isometric embedding $\eta \colon C \to X_\omega$ such that $\eta \circ f_{\infty}^{k} = f_{\omega}^k$ for $k \geq 1$. For each $k \in \N$, the equicontinuity of $( f_m^k )$ implies that the pointwise convergence of $( f_{m_l}^{k} \colon Y^k \to X_{m_l} )$ to $f_{\infty}^k \colon Y^k \to Z$ improves to uniform convergence on compact sets. Thus (1) and (2) in the statement of the claim hold.

To finish, we address the flexibility part of the claim. If $( \mathcal{F}_N )_{ N \geq 1 }$ is as stated in the statement of the theorem and $( \mathcal{G}_N )_{ N \geq 1 }$ is as above, we may replace $\mathcal{G}_N$ by $\mathcal{G}_N' = \mathcal{G}_N \cap \mathcal{F}_N$ in the construction of the sequences $(N_l)$ and $( m_l )$ but otherwise the proof is the same. This completes the proof of \Cref{thm:embedding-mapping-packages}.

\section{Lusin--Lipschitz admissible sequences}\label{section-lusin-lipschitz-admissible-sequences}
In this section, we study quantitative Lusin--Lipschitz admissible sequences, defined below. They will be a crucial ingredient in the proofs of the ultralimit theorems. Let $X$ be a complete metric space and $\Omega \subset\R^n$ a bounded Lipschitz domain.

\begin{definition}\label{definition-lusin-lipschitz-admissible-sequence}
  A sequence of Lipschitz maps $f^k\colon \Omega\to X$ is called Lusin--Lipschitz admissible if there exist non-empty measurable subsets $E^1\subset E^2\subset \dots\subset\Omega$ such that $\lm^n(\Omega\setminus E^k)\to 0$ as $k$ tends to infinity and $f^{k+1}|_{E^k} = f^k|_{E^k}$ for every $k$.
\end{definition}

If $\lambda_k, c_k>0$ with $c_k\to 0$ are such that $f^k$ is $\lambda_k$-Lipschitz and $\lm^n(\Omega\setminus E^k)\leq c_k$ for every $k$, we call $\lambda_k$ and $c_k$ the parameters of the Lusin--Lipschitz admissible sequence $(f^k)$. Clearly, for almost every $z\in\Omega$, the sequence $(f^k(z))$ is eventually constant and, in particular, the pointwise limit $f(z)= \lim_{k\to\infty} f^k(z)$ is an almost everywhere defined measurable and essentially separably valued map from $\Omega$ to $X$. This observation will be at the basis of the ultralimit construction.

The following proposition shows that a Lusin--Lipschitz admissible sequence $( f^k )$ converges to the pointwise limit $f$ in $L^p$ quantitatively in the parameters $\lambda_k$ and $c_k$.

\begin{proposition}\label{prop:distance-admissible-sequence-limit}
   Given $C,p>1$ there exist $C',\delta>0$ with the following properties. For every Lusin--Lipschitz admissible sequence of maps $f^k\colon\Omega\to X$ with parameters $\lambda_k= C\cdot 2^k$ and $c_k= \lm^n(\Omega) \cdot 2^{-kp}$ and limit map $f$, the real-valued function $h_k= d(f,f^k)$ satisfies $$\|h_k\|_{L^p(\Omega)} \leq 2^{-k\delta}C'$$ for every $k\in\N$.
\end{proposition}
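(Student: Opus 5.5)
The plan is to telescope along the sequence and to bound each increment using the measure density of the Lipschitz domain $\Omega$. Since $f^{j+1}=f^j$ on $E^j$ and $\lm^n(\Omega\setminus E^j)\to 0$, for almost every $z\in\Omega$ we have $f(z)=f^j(z)$ for all large $j$. Hence, almost everywhere,
\begin{align*}
    h_k = d(f,f^k) \leq \sum_{j\geq k} g_j,
    \qquad g_j \coloneqq d(f^{j+1},f^j).
\end{align*}
By Minkowski's inequality (applied to partial sums, then monotone convergence), $\|h_k\|_{L^p(\Omega)}\leq \sum_{j\geq k}\|g_j\|_{L^p(\Omega)}$. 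It therefore suffices to show $\|g_j\|_{L^p(\Omega)}\leq C''2^{-j\delta}$ for some $\delta>0$ and some $C''$ independent of $j$ and of the particular sequence. Summing the geometric series then gives the claim with $C'=C''/(1-2^{-\delta})$.

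Each $g_j$ is a real-valued function that is $(\lambda_j+\lambda_{j+1})=3C2^j$-Lipschitz and vanishes on $E^j$. Consequently, for $z\in\Omega\setminus E^j$,
\begin{align*}
    g_j(z)\leq 3C2^j\,\mathrm{dist}(z,E^j).
\end{align*}
The key geometric input is that a bounded Lipschitz domain satisfies a measure density condition: there is $a>0$, depending only on $\Omega$, with $\lm^n(B(z,r)\cap\Omega)\geq a r^n$ for all $z\in\Omega$ and $0<r\leq\diam\Omega$. Shrinking $a$ if needed, we may assume $a\leq \lm^n(\Omega)/(\diam\Omega)^n$.

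Take $z\in\Omega\setminus E^j$ and $r<\mathrm{dist}(z,E^j)$. Then $B(z,r)\cap\Omega\subset\Omega\setminus E^j$, so $a r^n\leq c_j$ whenever $r\leq\diam\Omega$. Since also $\mathrm{dist}(z,E^j)\leq\diam\Omega\leq (c_j/a)^{1/n}$ in the remaining case (using $E^j\neq\emptyset$ and our choice of $a$), we get in all cases
\begin{align*}
    \mathrm{dist}(z,E^j)\leq (c_j/a)^{1/n}.
\end{align*}
Integrating over a set of measure at most $c_j$ yields
\begin{align*}
    \|g_j\|_{L^p(\Omega)}\leq 3C2^j (c_j/a)^{1/n} c_j^{1/p}.
\end{align*}
With $c_j=\lm^n(\Omega)2^{-jp}$ we have $c_j^{1/p}=\lm^n(\Omega)^{1/p}2^{-j}$, which exactly cancels the factor $2^j$. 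What remains is
\begin{align*}
    \|g_j\|_{L^p(\Omega)}\leq 3C\,(\lm^n(\Omega)/a)^{1/n}\lm^n(\Omega)^{1/p}\,2^{-jp/n},
\end{align*}
so $\delta=p/n$ works.

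The main point needing care is this bound on $\mathrm{dist}(z,E^j)$ in terms of $\lm^n(\Omega\setminus E^j)$, which is where the Lipschitz-domain hypothesis enters. Note that $C'$ then depends on $\Omega$ as well as on $C$ and $p$; this is harmless, since $\Omega$ is fixed throughout the section.
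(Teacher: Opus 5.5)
Your proof is correct, and it takes a different route from the paper. The paper does not telescope. It bounds $h_k$ on each shell $E^i\setminus E^{i-1}$ by the crude quantity $(\lambda_i+\lambda_k)\diam\Omega$, which makes $\sum_i 2^{qi}\lm^n(\Omega\setminus E^{i-1})$ summable only for $q<p$. It then controls $|\nabla d(f^l,f^k)|$ in $L^q$ the same way and obtains $\|h_k\|_{W^{1,q}(\Omega)}\lesssim 2^{(1-p/q)k}$. Finally it returns to $L^p$ through the Sobolev embedding $W^{1,q}(\Omega)\hookrightarrow L^{qn/(n-q)}(\Omega)$ with $qn/(n-q)>p$, giving $\delta=p/q-1$.

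Your key step replaces the factor $\diam\Omega$ by $\mathrm{dist}(z,E^j)\le (c_j/a)^{1/n}$. This needs only the measure density of $\Omega$, not a Sobolev embedding, and it lets you estimate directly in $L^p$ without dropping to $q<p$. It buys three things:
\begin{enumerate}
\item The argument is more elementary.
\item It gives the better exponent $\delta=p/n$. The paper's $\delta=p/q-1$ is strictly smaller, because $qn/(n-q)>p$ forces $q>pn/(n+p)$.
\item It works uniformly in all dimensions, including $n=1$, where the paper's choice $1<q<\min\{p,n\}$ is not available as written.
\end{enumerate}
What the paper's route gives in addition is a quantitative $W^{1,q}$ bound on $h_k$, which the rest of the paper does not use.

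One small slip. In the ``remaining case'' you assert $\diam\Omega\le (c_j/a)^{1/n}$. This does not follow from $a\le \lm^n(\Omega)/(\diam\Omega)^n$, and it is false for large $j$, since the right-hand side tends to $0$. It is also unnecessary. Because $E^j\neq\emptyset$, you have $\mathrm{dist}(z,E^j)\le\diam\Omega$, so every $r<\mathrm{dist}(z,E^j)$ already satisfies $r\le\diam\Omega$. There is no remaining case, and you can drop the extra normalization of $a$.
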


The proof will show that $\delta$ depends on $p$ and $n$ and $C'$ depends on $p$, $n$, $C$, and $\Omega$.

\begin{proof}
 Let $C, p>1$ and for $k\in\N$ set $\lambda_k= C\cdot 2^k$ and $c_k= \lm^{n}(\Omega)\cdot 2^{-kp}$. Suppose $(f^k)$ is a Lusin--Lipschitz admissible sequence of maps $f^k\colon\Omega\to X$ with parameters $\lambda_k$ and $c_k$. That is, $f^k$ is $\lambda_k$-Lipschitz and there exist non-empty measurable subsets $E^1\subset E^2\subset\dots \subset \Omega$ such that $\lm^n(\Omega\setminus E^k)\leq c_k$ and $f^{k+1}|_{E^k} = f^k|_{E^k}$ for every $k$. The limit $f$ of the sequence $(f^k)$ satisfies $f|_{E^k} = f^k|_{E^k}$ for every $k$.
 
 Fix $k\in\N$ and abbreviate $h=h_k$. We claim that $h\in W^{1,q}(\Omega)$ with Sobolev norm $$\|h\|_{W^{1,q}(\Omega)}\leq 2^{\left(1-\frac{p}{q}\right)k}D$$ for every $1<q<p$, where $D$ depends on $p$, $q$, $C$, $n$, and $\diam(\Omega)$.  For this, observe first that $h|_{E^k}=0$ and, for $i> k$, we have $$h|_{E^i} = d(f^i, f^k)|_{E^i}\leq (\lambda_i + \lambda_k)\cdot\diam(\Omega)\leq 2^{i+1}C\cdot\diam(\Omega).$$
From this, we deduce that 
\begin{equation*}
    \begin{split}
        \int_\Omega h^q(z)\,dz &= \sum_{i=k+1}^\infty \int_{E^i\setminus E^{i-1}}h^q(z)\,dz\\
        &\leq (2C\diam(\Omega))^q\cdot \sum_{i=k+1}^\infty 2^{qi}\cdot \lm^n(\Omega\setminus E^{i-1})\\
 &\leq 2^{(q-p)k} C_0
    \end{split}
\end{equation*}
for some constant $C_0$ depending only on $p, q, C$, $n$, and $\diam(\Omega)$. Now, for $l\geq k$, consider the Lipschitz function $g_l=d(f^l,f^k)$. Since $$|g_l-h_k| = |d(f^l, f^k) - d(f^k, f)| \leq h_l,$$ it follows from the above that $g_l$ converges to $h_k$ in $L^q$. Notice that $g_l|_{E^k}=0$ and $g_l$ is $2\lambda_i$-Lipschitz on $E^i$ for every $i\geq l$. In particular, $|\nabla g_l|=0$ almost everywhere on $E^k$ and $|\nabla g_l|\leq 2\lambda_i$ almost everywhere on $E^i$. Therefore, we obtain almost as above
\begin{equation*}
      \int_\Omega |\nabla g_l|^q\,dz \leq (2C)^q \sum_{i=k+1}^\infty 2^{qi}\lm^n(\Omega\setminus E^{i-1}) \leq 2^{(q-p)k}C_1
\end{equation*}
for some constant $C_1$ only depending on $p$, $q$, $n$, and $C$. Thus, the $L^q$-norm of $|\nabla g_l|$ is bounded independently of $l$ and since $g_l$ converges in $L^q$ to $h$, it follows that $h\in W^{1,q}(\Omega)$ and $$\int_\Omega |\nabla h|^q\,dz\leq 2^{(q-p)k}C_1.$$ This yields the claim.

    Finally, consider $1<q<\min\{p, n\}$ such that $\frac{qn}{n-q}>p$. By the Sobolev embedding theorem and H\"older's inequality, there exists a constant $D'$ only depending on $p, q, n$, and $\Omega$ such that $$\|h\|_{L^p(\Omega)} \leq D' \|h\|_{W^{1,q}(\Omega)}\leq 2^{\left(1-\frac{p}{q}\right)k}D'D.$$ This concludes the proof with $\delta = \frac{p}{q} - 1$ and $C'= D' D$.
\end{proof}

\begin{proposition}\label{prop:Sobolev-limit-admissible-sequence}
Given $p > 1$ and $M \geq 1$, there exists $C \geq 1$, depending only on $M$, $\Omega$ and $p$, with the following property. Let $X$ be an injective metric space, $p_X\in X$, and let $f\in W^{1,p}(\Omega, X)$ satisfy $$\int_\Omega d^p(f(z), p_X)\,dz+ E_+^p(f)\leq M.$$ Then $f$ is the limit of a Lusin--Lipschitz admissible sequence $(f^k)$ with parameters $\lambda_k = C \cdot 2^{k}$ and $c_k= \lm^n(\Omega) \cdot 2^{-kp}$ and moreover satisfying $f^k(\Omega)\subset \overline{B}_{ X }(p_X, \lambda_k)$ for every $k$.
\end{proposition}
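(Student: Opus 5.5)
We will use the Hajłasz gradient together with the injectivity of $X$. Fix a Hajłasz gradient $g$ of $f$ as in \eqref{eq:hajlaszgradient} with $\|g\|_{L^p(\Omega)}^p\leq C_\Omega M$ by \eqref{eq:normestimate:hajlasz:sobolev}, where $g=\infty$ on the exceptional null set. For a threshold $t>0$ we set
\[
  F_t=\{z\in\Omega\colon g(z)\leq t,\ d(f(z),p_X)\leq t\}.
\]
On $F_t$ the map $f$ is $2t$-Lipschitz and takes values in $\overline{B}_X(p_X,t)$. By Chebyshev,
\[
  \lm^n(\Omega\setminus F_t)\leq t^{-p}\Bigl(\int_\Omega g^p\,dz+\int_\Omega d^p(f,p_X)\,dz\Bigr)\leq t^{-p}(C_\Omega+1)M.
\]
We choose $t_k=A\cdot 2^{k}$ with $A^p\geq (C_\Omega+1)M/\lm^n(\Omega)$. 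This gives $\lm^n(\Omega\setminus F_{t_k})\leq \lm^n(\Omega)2^{-kp}=c_k$. We then set $E^k=F_{t_k}$. These sets are measurable and increasing in $k$, since $t_k$ increases. If necessary we enlarge $A$ so that each $E^k$ is non-empty; this is possible because $\lm^n(\Omega\setminus E^k)<\lm^n(\Omega)$ once the inequality above is strict, which we arrange by taking $A$ slightly larger.

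Next we construct the maps $f^k$. Since closed balls in injective spaces are injective (\Cref{section-injective-metric-spaces}), the $2t_k$-Lipschitz map $f|_{E^k}\colon E^k\to\overline{B}_X(p_X,t_k)$ extends to a $2t_k$-Lipschitz map $f^k\colon\Omega\to\overline{B}_X(p_X,t_k)$. We take $C=2A$, so that $\lambda_k=C2^k=2t_k$ and $f^k(\Omega)\subset\overline{B}_X(p_X,t_k)\subset\overline{B}_X(p_X,\lambda_k)$. The constant $C$ depends only on $M$, $p$ and $\Omega$, through $C_\Omega$ and $\lm^n(\Omega)$.

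It remains to check compatibility and the identification of the limit. For $z\in E^k\subset E^{k+1}$ we have $f^{k+1}(z)=f(z)=f^k(z)$, so $f^{k+1}|_{E^k}=f^k|_{E^k}$. The sequence is therefore Lusin--Lipschitz admissible with the stated parameters. Its pointwise limit coincides with $f$ on $\bigcup_k E^k$, which has full measure, so $f$ is the limit.

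The only real subtlety, and the step we expect to be the main obstacle, is the choice of representative. The Hajłasz inequality holds only off a null set $N$, and the definition of $F_t$ must refer to a fixed representative of $f$. Setting $g=\infty$ on $N$ handles this, since then $N\cap F_t=\emptyset$. The remaining points are routine: measurability of $F_t$ (for a measurable representative of $f$ and $g$), and the non-emptiness adjustment of $A$ described above.
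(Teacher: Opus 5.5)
Your proof is correct and is essentially the paper's argument: Chebyshev's inequality on the sublevel sets of a Haj{\l}asz gradient and of $d(f,p_X)$ at geometric thresholds (your $A\cdot 2^k$ plays the role of the paper's $2^{k+k_0}$), followed by Lipschitz extension into an injective closed ball. One cosmetic point: also take $A\geq 1/2$ so that $C=2A\geq 1$, as the statement requires. Your enlargement of $A$ for non-emptiness is harmless but unnecessary, since $c_k<\lm^n(\Omega)$ for $k\geq 1$.
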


The condition that $X$ be an injective metric space can be weakened. It is enough to assume that $X$ is Lipschitz $(n-1)$-connected; see e.g. \cite{lang-schlichenmaier-2005-nagata-dimension-quasisymmetric-embeddings-and-lipschitz-extensions} or \cite{hajlasz-schikorra-2014-lipschitz-homotopy-and-density-of-lipschitz-mappings-in-sobolev-spaces} for related extension constructions. In that case, the constant $C$ also depends on the data of the Lipschitz connectivity.

\begin{proof}
Let $p_X$ and $f$ be as in the statement of the proposition. Consider a Haj{\l}asz gradient $g$ of $f$ satisfying
\begin{align*}
    \| g \|_{ L^{p}( \Omega ) }^p \leq C' E^{p}_{+}(g).
\end{align*}
Recall that we may choose $C' \geq 1$ to depend only on $\Omega$ and $p$ by \eqref{eq:normestimate:hajlasz:sobolev}.

Consider the sublevel sets
\begin{align*}
    A_k &= \left\{ g \leq 2^{k} \right\}
    \quad\text{and}\quad
    B_k = \left\{ d( f, p_X ) \leq 2^{k} \right\}
\end{align*}
for $k\geq 1$. We deduce from Chebyshev inequality that
\begin{align*}
    \lm^{n}( \Omega \setminus A_k ) &\leq C'\frac{ E^{p}_+(u) }{ 2^{kp} }
    \quad\text{and}\quad
    \lm^n( \Omega \setminus B_k ) \leq \frac{ \| d(f,p_X) \|_{ L^{p}( \Omega ) }^p }{ 2^{kp} }.
\end{align*}
Let $k_0 \in \N$ be so large that $C' M / 2^{k_0 p } < \lm^{n}(\Omega)$. Then, for $k\geq 1$, the set $E^k = A_{k+k_0} \cap B_{k+k_0}$ satisfies
\begin{align*}
    \lm^{n}( \Omega \setminus E^k )
    \leq
    \frac{ C'M }{ 2^{k_0 p} } \frac{ 1 }{ 2^{kp} }
    <
    \frac{ \lm^n( \Omega ) }{ 2^{kp} }
\end{align*}
and, in particular, $E^k$ is non-empty. Moreover, by \eqref{eq:hajlaszgradient}, the restriction $f|_{ E^{k} }$ is $C 2^{k}$-Lipschitz with $C = 2^{k_0+1}$, and $f( E^k ) \subset \overline{B}_X( p_X, C 2^{k} )$.

Now as $X$ is injective, we may apply the injectivity of $\overline{B}_{X}( p_X, C 2^{k} )$ to find a $\lambda_k$-Lipschitz extension $f^k$ of $f|_{E^{k}}$ satisfying $f^{k}( \Omega ) \subset \overline{B}_{X}( p_X, C 2^{k} )$. The sequence $( f^k )$ has the desired properties.
\end{proof}

\section{Proof of \Cref{theorem-ultralimit-construction,theorem-ultralimit-trace-properties}}\label{section-the-ultralimit-construction}
Below $\omega$ will denote a non-principal ultrafilter on $\N$. For each $m\in\N$, let $X_m=(X_m, d_m, p_m)$ be a pointed, complete metric space and consider a $p$-bounded sequence $(u_m \colon \Omega \to X_m)$. That is, $u_m\in W^{1,p}(\Omega, X_m)$ and $$\sup_{m\in\N}\left(\int_\Omega d^p(u_m(z), p_m)\,dz+ E_+^p(u_m)\right)<\infty.$$
Denote by $X'_m$ the injective hull of $X_m$; see \Cref{section-injective-metric-spaces}. We view $X_m$ as a subset of $X'_m$ and $u_m$ as an element of $W^{1,p}(\Omega, X'_m)$. We denote the metric on $X'_m$ by $d_m$ again. Denote by $X_\omega$ and $X'_\omega$ the ultralimits of the sequences $(X_m, d_m, p_m)$ and $(X'_m, d_m, p_m)$, respectively. 

By \Cref{prop:Sobolev-limit-admissible-sequence}, there exists $C$ such that each map $u_m$ is the limit of a Lusin--Lipschitz admissible sequence $(u_m^k)$ of maps $u_m^k\colon\Omega\to X'_m$ with parameters $\lambda_k=C2^k$ and $c_k=\lm^{n}(\Omega)2^{-pk}$ and moreover $$u_m^k(\Omega)\subset \overline{B}_{X'_m}(p_m, \lambda_k).$$ For each $k\in\N$, denote by $u_\omega^k$ the pointwise ultralimit of the sequence $(u_m^k)$.

\begin{lemma}\label{lemma-image-is-in-the-correct-place}
    The sequence $( u_\omega^k )$ is Lusin--Lipschitz admissible with parameters $\lambda_k$ and $c_k$. Moreover, after possibly redefining the limit $u_\omega$ of $( u_\omega^k )$ on a set of measure zero, $u_\omega$ has image in $X_\omega$.
\end{lemma}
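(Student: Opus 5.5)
The natural candidates for the exceptional sets of $(u_\omega^k)$ are the ultralimits of the exceptional sets of $(u_m^k)$. For each $m$, \Cref{prop:Sobolev-limit-admissible-sequence} provides non-empty measurable sets $E_m^1\subset E_m^2\subset\dots\subset\Omega$ with $\lm^n(\Omega\setminus E_m^k)\le c_k$ and $u_m^{k+1}|_{E_m^k}=u_m^k|_{E_m^k}$. From its proof we may also take $E_m^k$ to be a sublevel set of $g_m$ and of $d(u_m,p_m)$. On $E_m^k$ the map $u_m^k$ agrees with $u_m$, so $u_m^k(E_m^k)\subset X_m$.

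Let $E_\omega^k$ be the ultralimit of $(E_m^k)_m$ and put $F^k=\Omega\cap E_\omega^k$. By \Cref{lem:bound-ultralimit-set}, $F^k$ is compact relative to $\Omega$, hence measurable, and
\[
\lm^n(F^k)\ge \lim\nolimits_\omega\lm^n(E_m^k)\ge \lm^n(\Omega)-c_k .
\]
Monotonicity in $k$ is immediate from $E_m^k\subset E_m^{k+1}$. Each $u_\omega^k$ is $\lambda_k$-Lipschitz as a map into $X'_\omega$, since it is the pointwise ultralimit of uniformly $\lambda_k$-Lipschitz, uniformly bounded maps. It is therefore enough to show two things for $z\in F^k$:
\[
u_\omega^{k+1}(z)=u_\omega^k(z)\quad\text{and}\quad u_\omega^k(z)\in X_\omega .
\]

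Fix $z\in F^k$ and write $z=\lim_\omega z_m$ with $z_m\in E_m^k$. By the Lipschitz bounds,
\[
d_m\bigl(u_m^j(z),u_m^j(z_m)\bigr)\le\lambda_j|z-z_m|\quad\text{for } j=k,k+1 ,
\]
and the right-hand side has $\omega$-limit $0$. Hence $u_\omega^j(z)=[u_m^j(z_m)]$ for $j=k,k+1$. Since $u_m^{k+1}(z_m)=u_m^k(z_m)$, the two values coincide. Moreover $u_m^k(z_m)=u_m(z_m)\in X_m$, and this sequence is bounded by $\lambda_k$ from $p_m$. It therefore defines a point of $X_\omega$, regarded as a subset of $X'_\omega$ through the isometric inclusions $X_m\subset X'_m$. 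This shows that $(u_\omega^k)$ is Lusin--Lipschitz admissible with parameters $\lambda_k,c_k$, witnessed by the sets $F^k$.

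Since $u_\omega=u_\omega^k$ on $F^k$, the limit takes values in $X_\omega$ on $\bigcup_k F^k$. The complement of this union is null, and on it we redefine $u_\omega$ to be the constant $p_\omega$.

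The main subtlety is not that $E_\omega^k$ is compact, which can make it larger than expected. It is that one needs $z_m\in E_m^k$, and not only $z_m\in\Omega$, in order to place $u_m^k(z_m)$ inside $X_m$ and to compare $u_m^k$ with $u_m^{k+1}$. The construction above builds this choice in directly. A second point to check is that $F^k$ is non-empty, which is needed for admissibility; this follows from $c_k<\lm^n(\Omega)$.
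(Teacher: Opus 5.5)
Your proof is correct and is essentially the paper's argument: you take $F^k=\Omega\cap E_\omega^k$ from the ultralimits of the exceptional sets and use \Cref{lem:bound-ultralimit-set} for $\lm^n(\Omega\setminus F^k)\le c_k$; the Lipschitz estimate at $z=\lim\nolimits_\omega z_m$ with $z_m\in E_m^k$ then gives both $u_\omega^{k+1}(z)=u_\omega^k(z)$ and $u_\omega^k(z)=[u_m(z_m)]\in X_\omega$. One minor wording point: $F^k$ is relatively closed in $\Omega$ rather than compact, but that is all you need for measurability.
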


\begin{proof}
    Notice first that $u_\omega^k$ is $\lambda_k$-Lipschitz for every $k$. Now, for each $m$, let $$E_m^1\subset E_m^2\subset\dots\subset \Omega$$ be non-empty measurable sets such that $\lm^n(\Omega\setminus E_m^k)\leq c_k$ and $$u_m^{k+1}|_{E_m^k} = u_m^k|_{E_m^k}$$ for every $k$. Let $E_\omega^k$ be the ultralimit of $( E_m^k )$; see \Cref{section-ultrafilters}. Set $F^k=E_\omega^k\cap \Omega$. We clearly have $$F^1\subset F^2\subset\dots\subset\Omega$$ and, by \Cref{lem:bound-ultralimit-set}, we have $\lm^n(\Omega \setminus F^k)\leq c_k$ so every $F^k$ is non-empty. Next, letting $z\in F^k$, it holds that $z=\lim\nolimits_{\omega} z_m$ for some $z_m\in E_m^k$. Since 
    \begin{equation*}
    \begin{split}
        d_m(u_m^{k+1}(z), u_m^k(z))&\leq d_m(u_m^{k+1}(z), u_m^{k+1}(z_m)) + d_m(u_m^k(z_m), u_m^k(z))\\
        &\leq (\lambda_{k+1}+\lambda_k)\cdot |z-z_m|,
    \end{split}
    \end{equation*}
    it follows that $u_\omega^{k+1}(z) = u_\omega^k(z)$ and hence $$u_\omega^{k+1}|_{F^k} = u_\omega^k|_{F^k}.$$ We conclude that $(u_\omega^k)$ is a Lusin--Lipschitz admissible sequence with parameters $\lambda_k$ and $c_k$. Similarly, noting that $u_m^k(z_m) = u_m(z_m)$ and $$d_m(u_m^k(z), u_m(z_m))\leq \lambda_k |z-z_m|,$$ it follows that
    $$u_\omega(z) = \lim\nolimits_\omega u_m^k(z) = \lim\nolimits_\omega u_m(z_m)\in X_\omega.$$
    We conclude that $u_\omega$ maps into $X_\omega$ after possibly redefining $u_\omega$ on the set $\Omega \setminus \bigcup F^k$ of measure zero.
\end{proof}

We call $u_\omega$ the ultralimit associated with the Lusin--Lipschitz admissible sequences $(u_m^k)$. From now on, we will view $u_\omega$ as a map from $\Omega$ to $X_\omega$.

\begin{lemma}\label{lemma-lusin-lipschitz-admissibility}
    The ultralimit $u_\omega$ associated with $( u_m^k )$ belongs to $W^{1,p}(\Omega, X_\omega)$. Moreover,
    \begin{equation*}
        \lim\nolimits_{\omega} E^{p}( u_m ) \geq E^{p}( u_\omega )
    \end{equation*}
    for any lower semi-continuous energy and, if $p \geq n$,
    \begin{equation*}
        \lim\nolimits_{\omega} \mathrm{Vol}( u_m ) \geq \mathrm{Vol}( u_\omega )
    \end{equation*}
    for any lower semi-continuous volume.
\end{lemma}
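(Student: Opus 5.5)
The plan is to realize the whole construction inside a single complete metric space using \Cref{thm:embedding-mapping-packages}. There, ultralimits become honest limits along a subsequence, and we can invoke the lower semicontinuity of energies and volumes, which is defined for sequences converging in $L^p$ into a \emph{fixed} complete space $X$.

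\textbf{Step 1: set up the embedding.} Apply \Cref{thm:embedding-mapping-packages} with $Y^k=\Omega$ (separable) and $f_m^k=u_m^k\colon\Omega\to X'_m$. For fixed $k$ these maps are $\lambda_k$-Lipschitz and take values in $\overline{B}(p_m,\lambda_k)$, so they are equi-continuous and equi-bounded. To keep track of $X_m$ as well, we also include the constant maps to $p_m$. The theorem yields a complete $Z$, a subsequence $(m_l)$, isometric embeddings $X'_{m_l}\hookrightarrow Z$, limit maps $u_\infty^k$ with $u_{m_l}^k\to u_\infty^k$ uniformly on compact sets, and an isometric embedding $\eta\colon C\to X'_\omega$ with $\eta\circ u_\infty^k=u_\omega^k$.

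\textbf{Step 2: identify the $L^p$ limit.} The sequence $(u_\infty^k)$ is then Lusin--Lipschitz admissible, since $\eta$ is isometric and $(u_\omega^k)$ is admissible by \Cref{lemma-image-is-in-the-correct-place}. Its limit $u_\infty$ satisfies $\eta\circ u_\infty=u_\omega$ almost everywhere. We claim $u_{m_l}\to u_\infty$ in $L^p(\Omega,Z)$. By the triangle inequality,
\begin{align*}
\|d(u_{m_l},u_\infty)\|_{L^p}\le \|d(u_{m_l},u^k_{m_l})\|_{L^p}+\|d(u^k_{m_l},u^k_\infty)\|_{L^p}+\|d(u^k_\infty,u_\infty)\|_{L^p}.
\end{align*}
\Cref{prop:distance-admissible-sequence-limit} bounds the first and third terms by $2^{-k\delta}C'$, uniformly in $l$. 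For fixed $k$, the middle term tends to zero as $l\to\infty$, by uniform convergence on compact sets together with the uniform bound $2\lambda_k$ on the integrand and dominated convergence on the bounded set $\Omega$. Letting $l\to\infty$ and then $k\to\infty$ gives the claim.

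\textbf{Step 3: Sobolev regularity.} The sequence $(u_{m_l})$ is $p$-bounded in $Z$ and converges in $L^p$. By the lower semicontinuity of the Reshetnyak energy, $u_\infty\in W^{1,p}(\Omega,Z)$. The values of $u_\infty$ lie almost everywhere in $C$, since each $u_\infty^k$ does and $C$ is closed. Hence $u_\infty\in W^{1,p}(\Omega,C)$, because Lipschitz functions on $C$ extend to $Z$ by McShane. Composing with the isometry $\eta$ shows $u_\omega\in W^{1,p}(\Omega,X'_\omega)$, with image in the closed subset $X_\omega$, so $u_\omega\in W^{1,p}(\Omega,X_\omega)$.

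\textbf{Step 4: the semicontinuity inequalities.} Here we must ensure that the subsequence realizes the $\omega$-limits. Use the ``moreover'' clause of \Cref{thm:embedding-mapping-packages}: choose
\begin{align*}
\mathcal{F}_N=\{m : |E^p(u_m)-\lim\nolimits_\omega E^p(u_m)|<1/N\},
\end{align*}
and similarly for $\mathrm{Vol}$, so that $E^p(u_{m_l})\to\lim_\omega E^p(u_m)$. Lower semicontinuity in $Z$ then gives
\begin{align*}
\lim\nolimits_\omega E^p(u_m)=\lim_l E^p(u_{m_l})\ge E^p(u_\infty),
\end{align*}
and likewise for volumes when $p\ge n$. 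It remains to check $E^p(u_\infty)=E^p(u_\omega)$ and $\mathrm{Vol}(u_\infty)=\mathrm{Vol}(u_\omega)$. The volume equality holds because $\mathrm{md}$ is preserved under isometric embeddings. For energies, we expect invariance under isometric embeddings of the target to be a standing property of the energies considered; this holds for the Reshetnyak, Korevaar--Schoen and Haj{\l}asz energies, which are intrinsic to the target. I expect this last point, together with the bookkeeping that $u_\infty$ genuinely lands in $C$ so that $\eta\circ u_\infty$ makes sense, to be the main subtle step.
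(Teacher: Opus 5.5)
Your proposal is correct and follows essentially the same route as the paper: you apply the embedding theorem to the maps $u_m^k$, transfer Lusin--Lipschitz admissibility to $(u_\infty^k)$ through the isometry $\eta$, obtain $L^p$ convergence of $u_{m_l}$ from the three-term estimate using \Cref{prop:distance-admissible-sequence-limit}, and choose $(m_l)$ to realize the $\omega$-limits before invoking lower semicontinuity in $Z$. Your explicit use of the ``moreover'' clause, and your remark that energies and volumes are invariant under isometric embeddings of the target (needed both for $\iota_l$ and for $\eta$), spell out points the paper leaves implicit.
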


\begin{proof}
By \Cref{thm:embedding-mapping-packages}, there exist a complete metric space $Z$, a subsequence $(m_l)$ and isometric embeddings $\iota_l\colon X'_{m_l}\hookrightarrow Z$ such that, for each $k$, the sequence of maps $f_{m_l}^{k} = \iota_l\circ u_{m_l}^k$ converges uniformly to a map $f_\infty^k\colon \Omega\to Z$. Moreover, denoting by $C$ the closure of $\bigcup f_\infty^k(\Omega)$, there exists an isometric embedding $\eta\colon C\hookrightarrow X_\omega'$ such that $u_\omega^k = \eta\circ f_\infty^k$ for every $k$. This factorization guarantees that the sequence $(f^k_\infty)$ is Lusin--Lipschitz admissible with parameters $\lambda_k$ and $c_k$ by \Cref{lemma-image-is-in-the-correct-place}. We claim that its limit $f_\infty$ belongs to $W^{1,p}(\Omega, Z)$. Since the maps $f_{m_l} = \iota_l\circ u_{m_l}$ have uniformly bounded energy, it suffices to show that they converge to $f_\infty$ in $L^p$.

Let $\varepsilon>0$. By \Cref{prop:distance-admissible-sequence-limit}, there exists $k\in\N$ such that $$\|d(f_\infty, f_\infty^k)\|_{L^p(\Omega)}\leq \frac{\varepsilon}{3}\quad\text{ and }\quad\|d_m(u_m, u_m^k)\|_{L^p(\Omega)}\leq \frac{\varepsilon}{3}$$ for every $m$. Since $f_{m_l}^k$ converges uniformly to $f_\infty^k$ as $l\to\infty$, we finally obtain that $$\|d(f_\infty, f_{m_l})\|_{L^p(\Omega)}\leq \frac{\varepsilon}{3} + \|d(f_\infty^k, f_{m_l}^{k})\|_{L^p(\Omega)} + \frac{\varepsilon}{3}\leq \varepsilon$$ for all sufficiently large $l$. As $\varepsilon$ was arbitrary, this shows that $f_{m_l}$ converges to $f_\infty$ in $L^p$ and hence $f_\infty\in W^{1,p}(\Omega, Z)$.
Since $u_\omega^k = \eta \circ f_\infty^k$ for every $k$, it holds that $u_\omega = \eta \circ f_\infty$ almost everywhere so $u_\omega$ belongs to $W^{1,p}(\Omega, X_\omega)$.

Given a lower semi-continuous notion of an energy and volume (if $p \geq n$), when choosing the sequence $(m_l)$, we may also suppose that
\begin{align*}
    \lim_{l \to\infty} \mathrm{Vol}( u_{m_l} ) = \lim\nolimits_\omega \mathrm{Vol}( u_m ) \quad\text{if $p \geq n$}
    \quad\text{and}\quad
    \lim_{l\to\infty} E^{p}( u_{m_l} ) = \lim\nolimits_\omega E^{p}( u_{m} ).
\end{align*}
The claim follows.
\end{proof}

The next lemma implies that the map $u_\omega$ does not depend on the choice of the Lusin--Lipschitz admissible sequences $(u_m^k)$.

\begin{lemma}\label{lemma-control-on-the-Lp-norms}
    If $(v_m \colon \Omega \to X_m )$ is another $p$-bounded sequence with associated Lusin--Lipschitz admissible sequences $(v_m^k \colon \Omega \to X_m' )$ and ultralimit $v_\omega$, then $$\|d_\omega(u_\omega, v_\omega)\|_{L^p(\Omega)} = \lim\nolimits_\omega \|d_m(u_m, v_m)\|_{L^p(\Omega)}$$ and $$\|d_\omega(\trace(u_\omega),\trace(v_\omega))\|_{L^{p}(\partial \Omega)} = \lim\nolimits_\omega \|d_m( \trace(u_m), \trace(v_m) )\|_{L^{p}(\partial \Omega)}.$$ 
\end{lemma}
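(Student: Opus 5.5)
The plan is to run the argument of \Cref{lemma-lusin-lipschitz-admissibility} simultaneously for the two families of Lusin--Lipschitz admissible sequences $(u_m^k)$ and $(v_m^k)$. We apply \Cref{thm:embedding-mapping-packages} to the countable collection of maps $\{u_m^k, v_m^k\}_{k\in\N}$, all defined on $\Omega$ with values in $X'_m$; for fixed $k$ these are $\lambda_k$-Lipschitz with images in $\overline{B}(p_m,\lambda_k)$, so they are equicontinuous and equibounded. We thereby get a complete space $Z$, a subsequence $(m_l)$ and isometric embeddings $\iota_l$ such that $\iota_l\circ u^k_{m_l}\to f^k_\infty$ and $\iota_l\circ v^k_{m_l}\to g^k_\infty$ uniformly. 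We also get a single isometric embedding $\eta$ with $u^k_\omega=\eta\circ f^k_\infty$ and $v^k_\omega=\eta\circ g^k_\infty$. The flexibility clause of the theorem lets us additionally choose $(m_l)$ inside the sets
\[
\mathcal{F}_N=\Bigl\{m : \bigl|\|d_m(u_m,v_m)\|_{L^p(\Omega)}-\lim\nolimits_\omega\|d_m(u_m,v_m)\|_{L^p(\Omega)}\bigr|<1/N\Bigr\},
\]
and likewise for the trace quantity. This ensures that both quantities along $(m_l)$ converge (as $l\to\infty$) to their $\omega$-limits.

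As in the proof of \Cref{lemma-lusin-lipschitz-admissibility}, combining \Cref{prop:distance-admissible-sequence-limit} (which is uniform in $m$) with uniform convergence on $\Omega$ gives $\iota_l\circ u_{m_l}\to f_\infty$ and $\iota_l\circ v_{m_l}\to g_\infty$ in $L^p(\Omega,Z)$. Moreover $u_\omega=\eta\circ f_\infty$ and $v_\omega=\eta\circ g_\infty$ almost everywhere. Since $\eta$ and $\iota_l$ are isometric,
\[
\|d_\omega(u_\omega,v_\omega)\|_{L^p}=\|d(f_\infty,g_\infty)\|_{L^p}=\lim_{l}\|d(\iota_l u_{m_l},\iota_l v_{m_l})\|_{L^p}=\lim_l\|d_{m_l}(u_{m_l},v_{m_l})\|_{L^p}.
\]
This equals the $\omega$-limit by the choice of $(m_l)$. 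The middle equality is just the triangle inequality in $L^p(\Omega,Z)$.

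For traces, all maps are now in the single space $Z$. The sequences $\iota_l\circ u_{m_l}$ and $\iota_l\circ v_{m_l}$ are $p$-bounded in $Z$ (their energies are bounded, and $p_{m_l}$ maps to points staying at bounded distance, e.g.\ because $\iota_l\circ u^1_{m_l}$ converges). They converge in $L^p$ to $f_\infty$ and $g_\infty$, so the trace continuity fact recalled in \Cref{section-trace-theory} yields $\|d(\trace(\iota_l u_{m_l}),\trace(f_\infty))\|_{L^p(\partial\Omega)}\to0$, and similarly for $v$. Traces commute with isometric embeddings by the composition rule, so $\trace(\iota_l\circ u_{m_l})=\iota_l\circ\trace(u_{m_l})$ and $\trace(u_\omega)=\eta\circ\trace(f_\infty)$. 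The triangle inequality in $L^p(\partial\Omega)$ then gives the second identity, again passing from $\lim_l$ to $\lim_\omega$ via the choice of $(m_l)$.

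The main obstacle I expect is bookkeeping rather than analysis. One has to make sure that a single embedding $Z$ handles both families and both $\omega$-limits at once, so that no subsequence argument loses the $\omega$-limit; the flexibility clause of \Cref{thm:embedding-mapping-packages} is exactly what resolves this. The other point to check is that the $L^p$-convergence-implies-trace-convergence statement is applied in the fixed space $Z$ with $p$-bounded sequences, and in particular that $v_\omega=\eta\circ g_\infty$ takes values in $\eta(C)$, which requires both families to be included in the same application of the theorem. Finally, independence of $u_\omega$ from the choice of admissible sequences follows by taking $v_m=u_m$ with a different choice $(v_m^k)$.
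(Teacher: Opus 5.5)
Your proposal is correct and follows essentially the same route as the paper: you apply \Cref{thm:embedding-mapping-packages} once to both families $(u_m^k)$ and $(v_m^k)$, choosing $(m_l)$ via the flexibility clause so that both quantities converge to their $\omega$-limits, and then combine the $L^p$-convergence $\iota_l\circ u_{m_l}\to f_\infty$, $\iota_l\circ v_{m_l}\to g_\infty$ from the proof of \Cref{lemma-lusin-lipschitz-admissibility} with the trace continuity recalled in \Cref{section-trace-theory}. You also supply details the paper leaves implicit, namely the $p$-boundedness of the sequences in $Z$ and the commutation of traces with the isometric embeddings $\iota_l$ and $\eta$.
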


This immediately implies the following key properties.
\begin{corollary}
    If $(u_m)$ is Lipschitz bounded, then $u_\omega$ agrees (almost everywhere) with the pointwise ultralimit. Moreover, if $(\varphi_m)$ is a Lipschitz bounded sequence of maps $\varphi_m\colon X_m\to Y_m$, then the sequence of maps $v_m = \varphi_m\circ u_m$ satisfies $v_\omega=\varphi_\omega\circ u_\omega$ almost everywhere.
\end{corollary}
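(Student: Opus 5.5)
We prove the corollary by comparing $u_\omega$ with a second, explicitly understood $p$-bounded sequence and applying the first identity of \Cref{lemma-control-on-the-Lp-norms}, which says that the $L^p$-distance between ultralimits is the $\omega$-limit of the $L^p$-distances. The goal in each part is to show that a certain $L^p$-distance in $X_\omega$ (or $Y_\omega$) vanishes.

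\emph{Lipschitz case.} Suppose $(u_m)$ is Lipschitz bounded. Since the spaces $X_m$ may be viewed inside $X'_m$, a Lipschitz bounded sequence is $p$-bounded: $\Omega$ is bounded, so $d_m(p_m,u_m)$ is uniformly bounded, and $|\nabla u_m|\leq \LIP(u_m)$.

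1. For each $m$, choose the Lusin--Lipschitz admissible sequence for $u_m$ to be the constant sequence $u_m^k=u_m$ in \Cref{prop:Sobolev-limit-admissible-sequence}. This choice is allowed because the sets $E^k=\Omega$ work. The bound $u_m^k(\Omega)\subset\overline B(p_m,\lambda_k)$ and the Lipschitz constant $\lambda_k$ hold after enlarging $C$, which is permitted since $(u_m)$ is Lipschitz bounded.

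2. For this choice, $u^k_\omega$ is the pointwise ultralimit $\hat u$ for every $k$, so the associated limit map is $\hat u$.

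3. Now compare this with the ultralimit $u_\omega$ built from an arbitrary admissible choice. \Cref{lemma-control-on-the-Lp-norms} applies to any associated Lusin--Lipschitz admissible sequences, so take the same sequence $(u_m)$ with the two different admissible choices. This gives $\|d_\omega(u_\omega,\hat u)\|_{L^p}=\lim_\omega\|d_m(u_m,u_m)\|_{L^p}=0$, hence $u_\omega=\hat u$ almost everywhere.

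This step relies on \Cref{lemma-control-on-the-Lp-norms} being stated for arbitrary admissible choices, which I take as part of its content. That is exactly why it gives independence of the choice.

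\emph{Postcomposition.} Let $(\varphi_m)$ be Lipschitz bounded, with $\LIP(\varphi_m)\leq L$ and $d(q_m,\varphi_m(p_m))\leq D$.

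1. The sequence $v_m=\varphi_m\circ u_m$ is $p$-bounded: $d(q_m,v_m)\leq D+L\,d(p_m,u_m)$ and $E^p_+(v_m)\leq L^pE^p_+(u_m)$.

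2. Fix $k$. The maps $\varphi_m\circ u^k_m\colon\Omega\to Y_m$ are Lipschitz bounded, using $u_m^k(\Omega)\subset\overline B(p_m,\lambda_k)$. By the first part, the Sobolev ultralimit of $(\varphi_m\circ u^k_m)_m$ equals its pointwise ultralimit, and that pointwise ultralimit is $\varphi_\omega\circ u^k_\omega$.

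3. Apply \Cref{lemma-control-on-the-Lp-norms} in the target spaces $Y_m$ to the sequences $(v_m)$ and $(\varphi_m\circ u^k_m)$. Combined with the Lipschitz bound on $\varphi_m$, this gives
\[\|d_\omega(v_\omega,\varphi_\omega\circ u^k_\omega)\|_{L^p}=\lim\nolimits_\omega\|d_m(\varphi_m\circ u_m,\varphi_m\circ u_m^k)\|_{L^p}\leq L\sup_m\|d_m(u_m,u_m^k)\|_{L^p}\leq L C'2^{-k\delta},\]
where the last inequality is \Cref{prop:distance-admissible-sequence-limit}.

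4. By the same proposition applied to $(u^k_\omega)$, which is admissible with the same parameters by \Cref{lemma-image-is-in-the-correct-place}, we have $\|d_\omega(\varphi_\omega\circ u^k_\omega,\varphi_\omega\circ u_\omega)\|_{L^p}\leq LC'2^{-k\delta}$.

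5. By the triangle inequality, $\|d_\omega(v_\omega,\varphi_\omega\circ u_\omega)\|_{L^p}\leq 2LC'2^{-k\delta}$. Letting $k\to\infty$ gives $v_\omega=\varphi_\omega\circ u_\omega$ almost everywhere.

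\emph{Main obstacle.} The delicate point is checking that \Cref{lemma-control-on-the-Lp-norms} is applicable in the Lipschitz case. Its hypotheses involve the injective hulls $Y'_m$ and the Lipschitz bounded sequences $\varphi_m\circ u_m^k$, which take values in $Y_m$ only after composing with $\varphi_m$ extended to $X'_m$. This extension exists by injectivity of $Y'_m$. Because $u^k_m$ maps into $X'_m$, I would extend $\varphi_m$ to a Lipschitz bounded map $X'_m\to Y'_m$ and work entirely in the injective hulls. I would then check that the ultralimit of the hulls restricts correctly to $Y_\omega$.
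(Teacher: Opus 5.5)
Your proof is correct. It rests on the same two ingredients as the paper's: independence of the ultralimit from the admissible choice via \Cref{lemma-control-on-the-Lp-norms} (with the constant choice $u_m^k=u_m$ giving the Lipschitz case), and the Lipschitz bounded extensions $\Phi_m\colon X'_m\to Y'_m$ that you identify in your last paragraph. For the second part, the paper's route skips your approximation and error estimates: $(\Phi_m\circ u_m^k)_k$ is itself a Lusin--Lipschitz admissible sequence for $v_m$ in $Y'_m$ (same sets $E_m^k$, parameters $(D+LC)2^k$ and $c_k$), with associated ultralimit $\lim_k\Phi_\omega\circ u_\omega^k=\Phi_\omega\circ u_\omega=\varphi_\omega\circ u_\omega$, so independence of the choice gives $v_\omega=\varphi_\omega\circ u_\omega$ at once.
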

The first part is immediate from \Cref{lemma-control-on-the-Lp-norms}. The second part uses the additional fact that $\varphi_m \colon X_m \to Y_m$ has an extension $\Phi_m \colon X_{m}' \to Y_m'$ to the injective hulls so that $( \Phi_m )$ is Lipschitz bounded.

\begin{proof}[Proof of \Cref{lemma-control-on-the-Lp-norms}]
    The proof uses similar ideas as the one of \Cref{lemma-lusin-lipschitz-admissibility}. Indeed, by \Cref{thm:embedding-mapping-packages}, there exist a complete metric space $Z$, a subsequence $(m_l)$ and isometric embeddings $\iota_l\colon X'_{m_l}\hookrightarrow Z$ such that, for each $k$, the sequence of maps $f_{m_l}^{k} = \iota_l\circ u_{m_l}^k$ and $g_{m_l}^k = \iota_l \circ v_{m_l}^k$ converge uniformly to maps $f_\infty^k\colon \Omega\to Z$ and $g_\infty^k \colon \Omega \to Z$, respectively. Moreover, denoting by $C$ the closure of $\bigcup (f_\infty^k(\Omega) \cup g_\infty^k(\Omega))$, there exists an isometric embedding $\eta\colon C\hookrightarrow X_\omega'$ such that $u_\omega^k = \eta\circ f_\infty^k$ and $v_\omega^k = \eta \circ g_\infty^k$ for every $k$. In the choice of $( m_l )$, we may also suppose that
    \begin{align*}
        \lim_{l \to\infty} \|d_{m_l}(u_{m_l}, v_{m_l})\|_{L^p(\Omega)} = \lim\nolimits_\omega \|d_m(u_m, v_m)\|_{L^p(\Omega)}
    \end{align*}
    and
    \begin{align*}
        \lim_{l \to\infty} \|d_{m_l}(\trace(u_{m_l}), \trace(v_{m_l}))\|_{L^p(\partial \Omega)} = \lim\nolimits_\omega \|d_m(\trace(u_m), \trace(v_m))\|_{L^p(\partial \Omega)}.
    \end{align*}
    Let $f_\infty$ and $g_\infty$ denote the pointwise limits of $( f_\infty^k )$ and $( g_\infty^k )$ as in the proof of \Cref{lemma-lusin-lipschitz-admissibility}. We recall from the proof that $u_\omega = \eta \circ f_\infty$ and $v_\omega = \eta \circ g_\infty$ almost everywhere, and that $f_{m_l}$ and $g_{m_l}$ converge to $f_\infty$ and $g_\infty$ in $L^p$. It easily follows that
    \begin{align*}
        \lim_{l \to\infty} \|d_{m_l}(u_{m_l}, v_{m_l})\|_{L^p(\Omega)}
        &=
        \lim_{l \to\infty} \|d(f_{m_l}, f_{m_l})\|_{L^p(\Omega)}
        \\
        &=
        \| d(f_\infty,g_\infty) \|_{L^{p}(\Omega)}
        =
        \| d_\omega( u_\omega, v_\omega ) \|_{L^{p}(\Omega)}.
    \end{align*}
    The continuity properties of the trace imply the claim about traces; see \Cref{section-trace-theory}.
\end{proof}

We have proved that the assignment $(u_m)\mapsto u_\omega$ satisfies the properties stated in \Cref{theorem-ultralimit-construction}. To finish the proof of the theorem, it remains to address uniqueness.
\begin{lemma}
If an assignment $(u_m) \mapsto \widetilde{u}_\omega$ on $p$-bounded sequences satisfies \eqref{ultralimit:lipschitz-compatibility}, \eqref{ultralimit:postcomposition-compatibility}, and \eqref{ultralimit:Lp-compatibility}, then $u_\omega = \widetilde{u}_\omega$ almost everywhere for every $p$-bounded $(u_m)$.
\end{lemma}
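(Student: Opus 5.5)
The plan is to compare $\widetilde{u}_\omega$ with $u_\omega$ through the Lipschitz approximations $u_m^k$, using axiom \eqref{ultralimit:Lp-compatibility} for $\widetilde{u}_\omega$ as a bridge. Fix a $p$-bounded sequence $(u_m)$ and let $(u_m^k)$ be the Lusin--Lipschitz admissible sequences of maps $u_m^k\colon\Omega\to X'_m$ given by \Cref{prop:Sobolev-limit-admissible-sequence}. For each fixed $k$, the sequence $(u_m^k)$ is Lipschitz bounded and also $p$-bounded, because $u_m^k(\Omega)\subset \overline{B}(p_m,\lambda_k)$ and $\Omega$ is bounded.

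The first difficulty is that $u_m^k$ takes values in $X'_m$, not in $X_m$. So we must interpret the axioms for target spaces $X'_m$. The natural device is axiom \eqref{ultralimit:postcomposition-compatibility}, applied to the isometric inclusions $\iota_m\colon X_m\hookrightarrow X'_m$. This sequence is Lipschitz bounded, and its pointwise ultralimit $\iota_\omega\colon X_\omega\to X'_\omega$ is the isometric inclusion. Hence $\widetilde{(\iota_m\circ u_m)}_\omega=\iota_\omega\circ\widetilde{u}_\omega$ almost everywhere. It therefore suffices to show that $\widetilde{(\iota_m\circ u_m)}_\omega=u_\omega$ almost everywhere, with both viewed in $X'_\omega$. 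In other words, we may work entirely in the targets $X'_m$.

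Now we apply the axioms with targets $X'_m$. By \eqref{ultralimit:lipschitz-compatibility}, $\widetilde{(u_m^k)}_\omega=u_\omega^k$ almost everywhere, where $u_\omega^k$ is the pointwise ultralimit. By \eqref{ultralimit:Lp-compatibility} and \Cref{prop:distance-admissible-sequence-limit},
\begin{align*}
\|d_\omega(\widetilde{u}_\omega,u_\omega^k)\|_{L^p(\Omega)}=\lim\nolimits_\omega\|d_m(u_m,u_m^k)\|_{L^p(\Omega)}\leq 2^{-k\delta}C'.
\end{align*}
Here $C'$ and $\delta$ do not depend on $m$. By \Cref{lemma-image-is-in-the-correct-place}, $(u_\omega^k)$ is Lusin--Lipschitz admissible with the same parameters and limit $u_\omega$. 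So \Cref{prop:distance-admissible-sequence-limit} also gives $\|d_\omega(u_\omega,u_\omega^k)\|_{L^p}\leq 2^{-k\delta}C'$. The triangle inequality in $L^p$ then yields $\|d_\omega(\widetilde{u}_\omega,u_\omega)\|_{L^p}\leq 2^{1-k\delta}C'$ for every $k$. Letting $k\to\infty$ gives $\widetilde{u}_\omega=u_\omega$ almost everywhere.

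The main obstacle is the target issue handled in the second paragraph. The axioms must be understood as holding for every sequence of complete pointed spaces, in particular for the injective hulls $X'_m$, and compatibly with postcomposition by the inclusions. If one insisted on staying inside $X_m$, one would instead need Lipschitz-bounded approximations with values in $X_m$ itself, which are not available in general. I would state explicitly that the assignment is defined on all $p$-bounded sequences into arbitrary complete pointed spaces, which is how \Cref{theorem-ultralimit-construction} is formulated, and then use \eqref{ultralimit:postcomposition-compatibility} as above.
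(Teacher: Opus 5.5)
Your proof is correct and follows essentially the same route as the paper: apply axioms \eqref{ultralimit:lipschitz-compatibility} and \eqref{ultralimit:Lp-compatibility} to the Lipschitz-bounded approximations $(u_m^k)$, use \Cref{prop:distance-admissible-sequence-limit} together with \Cref{lemma-image-is-in-the-correct-place} to get
\begin{align*}
\|d_\omega(\widetilde{u}_\omega,u_\omega^k)\|_{L^p(\Omega)}\leq 2^{-k\delta}C' \quad\text{and}\quad \|d_\omega(u_\omega,u_\omega^k)\|_{L^p(\Omega)}\leq 2^{-k\delta}C',
\end{align*}
and then let $k\to\infty$. Your use of axiom \eqref{ultralimit:postcomposition-compatibility} with the inclusions $X_m\hookrightarrow X'_m$ justifies applying the axioms to maps into the injective hulls; this spells out a step the paper leaves implicit in the phrase ``by the axioms''.
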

\begin{proof}
Given $( u_m^k )$ as above, the sequence $(u_m^k)$ is Lipschitz bounded for $k \in \N$, and \Cref{prop:distance-admissible-sequence-limit} guarantees $$\|d_m(u_m, u_m^k)\|_{L^p(\Omega)}\leq 2^{-k\delta}C' \quad\text{for $k \in \N$,}$$ where $\delta$ and $C'$ are independent of $k$ and $m$. Therefore, by the axioms, it holds that
\begin{align*}
    \| d_\omega( \tilde{u}_\omega, u_\omega^k ) \|_{ L^{p}(\Omega) }
    =
    \lim\nolimits_\omega 
    \|d_m(u_m, u_m^k)\|_{L^p(\Omega)}\leq 2^{-k\delta}C'
\end{align*}
and similarly that $\| d_\omega( u_\omega, u_\omega^k ) \|_{ L^{p}(\Omega) } \leq 2^{-k\delta}C'$. The uniqueness follows by passing to the limit as $k \to \infty$.
\end{proof}

To finish the proof of \Cref{theorem-ultralimit-trace-properties}, it remains to verify \eqref{ultralimit:compatibility-of-traces}.
\begin{lemma}
If the traces $\trace(u_m) \colon \partial \Omega \to X_m$ have continuous representatives $( \gamma_m )$ from an equi-continuous and equi-bounded family, then the pointwise ultralimit $\gamma_\omega$ of $( \gamma_m )$ is a continuous representative of $\trace( u_\omega )$.
\end{lemma}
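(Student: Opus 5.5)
We will use the geometric realization from \Cref{thm:embedding-mapping-packages}, now with the boundary curves included in the family. Concretely, we take the countable collection consisting of the maps $u_m^k\colon\Omega\to X'_m$ for $k\in\N$, together with the map $\gamma_m\colon\partial\Omega\to X_m\subset X'_m$. Each of these sequences is equi-continuous and equi-bounded, since $\partial\Omega$ is separable. The theorem then provides a complete space $Z$, a subsequence $(m_l)$, isometric embeddings $\iota_l\colon X'_{m_l}\to Z$, and limits $f^k_\infty$ and $\gamma_\infty$ such that $\iota_l\circ\gamma_{m_l}\to\gamma_\infty$ uniformly on the compact set $\partial\Omega$. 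It also provides an isometric embedding $\eta\colon C\to X'_\omega$ with $u^k_\omega=\eta\circ f^k_\infty$ and $\gamma_\omega=\eta\circ\gamma_\infty$. In particular $\gamma_\omega$ is continuous, since it has the common modulus of continuity of $(\gamma_m)$, and it takes values in $X_\omega$.

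As in the proof of \Cref{lemma-lusin-lipschitz-admissibility}, the maps $f_{m_l}=\iota_l\circ u_{m_l}$ converge in $L^p(\Omega,Z)$ to $f_\infty$, they are $p$-bounded, and $u_\omega=\eta\circ f_\infty$ almost everywhere. Since $\eta$ is an isometric embedding, the defining property of traces gives $\trace(u_\omega)=\eta\circ\trace(f_\infty)$ almost everywhere on $\partial\Omega$. Here we use that $f_\infty$ takes values in $C$ almost everywhere, so that its trace also lies in the closed set $C$. It therefore suffices to prove $\trace(f_\infty)=\gamma_\infty$ $\mathcal{H}^{n-1}$-almost everywhere.

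This is the step where convergence in $Z$ has to be passed through the trace, and we do it with the continuity of traces recalled in \Cref{section-trace-theory}. Since $f_{m_l}\to f_\infty$ in $L^p$ and both sequences are $p$-bounded, we get
\begin{align*}
\|d(\trace(f_{m_l}),\trace(f_\infty))\|_{L^p(\partial\Omega)}\to 0 .
\end{align*}
Moreover $\trace(f_{m_l})=\iota_l\circ\trace(u_{m_l})=\iota_l\circ\gamma_{m_l}$ almost everywhere, by the composition rule for traces with the isometry $\iota_l$. Finally, $\iota_l\circ\gamma_{m_l}\to\gamma_\infty$ uniformly, hence in $L^p(\partial\Omega)$ because $\mathcal{H}^{n-1}(\partial\Omega)<\infty$.

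Combining these with the triangle inequality gives $\|d(\trace(f_\infty),\gamma_\infty)\|_{L^p(\partial\Omega)}=0$, which is the claim. Composing with $\eta$, we conclude that $\gamma_\omega$ is a continuous representative of $\trace(u_\omega)$.

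The main point of care is that the trace continuity statement in \Cref{section-trace-theory} is formulated for maps into a varying sequence of spaces with a common ambient comparison. Here everything lives in the single space $Z$, so the statement applies directly to the real-valued functions $d(f_{m_l},f_\infty)$: these form a bounded sequence in $W^{1,p}(\Omega)$ that tends to $0$ in $L^p$, so their traces tend to $0$ in $L^p(\partial\Omega)$ by weak continuity of the scalar trace operator.
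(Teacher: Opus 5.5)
Your proof is correct and follows the paper's argument: you include $(\gamma_m)$ in the family fed to \Cref{thm:embedding-mapping-packages}, use $\trace(f_{m_l})=\iota_l\circ\gamma_{m_l}$, pass to the limit via $L^p$-continuity of traces for $p$-bounded, $L^p$-convergent sequences in the single space $Z$, and push forward through $\eta$. Your extra remarks fill in details the paper leaves implicit: that $\trace(f_\infty)$ takes values in the closed set $C$, so that composing with $\eta$ is legitimate, and that uniform convergence on $\partial\Omega$ gives $L^p(\partial\Omega)$ convergence.
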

\begin{proof}
    By \Cref{thm:embedding-mapping-packages}, there exist a complete metric space $Z$, a subsequence $(m_l)$ and isometric embeddings $\iota_l\colon X'_{m_l}\hookrightarrow Z$ such that, for each $k$, the sequence of maps $f_{m_l}^{k} = \iota_l\circ u_{m_l}^k$ converges uniformly to a map $f_\infty^k\colon \Omega\to Z$ and $\iota_l \circ \gamma_{m_l}$ converge uniformly to a map $\gamma_\infty$. Moreover, denoting by $C$ the closure of $\bigcup ( f_\infty^k(\Omega) \cup \gamma_{\infty}(\partial \Omega) )$, there exists an isometric embedding $\eta\colon C\hookrightarrow X_\omega'$ such that $u_\omega^k = \eta\circ f_\infty^k$ for every $k$ and $\gamma_\omega = \eta \circ \gamma_\infty$.

    We use the notation and arguments from the proof of \Cref{lemma-lusin-lipschitz-admissibility}. Here $\trace( f_{m_l} ) = \iota_l \circ \trace( u_{m_l} ) = \iota_l \circ \gamma_{m_l}$ almost everywhere. Given that the traces of $( f_{m_l} )$ converge to the trace of $f_\infty$ in $L^{p}( \partial \Omega, Z )$ by the $L^p$-convergence of $( d( f_{m_l}, f_\infty ) )$ to zero, it follows that $\trace( f_\infty ) = \gamma_\infty$ almost everywhere. Recalling that $u_\omega = \eta \circ f_\infty$ almost everywhere, it holds that $\trace(u_\omega) = \eta \circ \trace( f_\infty ) = \eta \circ \gamma_\infty = \gamma_\omega$ almost everywhere.
\end{proof}
The proofs of \Cref{theorem-ultralimit-construction,theorem-ultralimit-trace-properties} are now complete.

\section{Stability results}\label{section-stability-results}
We apply the ultralimit theorems, \Cref{theorem-ultralimit-construction,theorem-ultralimit-trace-properties}, in the coming subsection in the proof of \Cref{thm:stability-introduction}. In the subsequent subsection, we prove the stability of Dehn functions, \Cref{theorem-stability-dehn-function}, as an application of \Cref{thm:stability-introduction}.

We will actually prove a version of these results for any lower semicontinuous area $\mu$, so we need the following definition: given a Lipschitz curve $\gamma \colon \mathbb{S}^1 \rightarrow X$ in a complete metric space $X$, the \emph{Sobolev filling area with respect to $\mu$} of $\gamma$ is
\begin{align*}
    \mathrm{FillArea}^{\mu}_{X}(\gamma) = \inf \left\{ \mathrm{Area}^{\mu}(u) \colon u \in W^{1,2}( \mathbb{D}, X ), \, \mathrm{tr}(u) = \gamma \right\}.
\end{align*}
When $\mu$ is omitted, we refer to the parametrized Hausdorff area of $\gamma$.

Below $\omega$ will denote a non-principal ultrafilter on $\N$.

\subsection{Proof of \Cref{thm:stability-introduction}}\label{section-lower-semicontinuity-of-areas}

We formulate the following general version of \Cref{thm:stability-introduction}.
\begin{theorem}\label{thm:stability}
Let $\mu$ be a lower semi-continuous area. Suppose that $( X_m, d_m, p_m )$ is a sequence of complete pointed length spaces and $X_\omega$ the ultralimit. If $( \gamma_m \colon \mathbb{S}^1 \to X_m )$ is a Lipschitz bounded sequence with $\gamma = \lim\nolimits_{\omega} \gamma_m$ and $\lim\nolimits_{\omega} \mathrm{FillArea}^{\mu}_{X_m}( \gamma_m ) < \infty$, then there exists $u \in W^{1,2}( \mathbb{D}, X_\omega )$ with
\begin{align*}
    \trace(u) = \gamma
    \quad\text{and}\quad
    \mathrm{Area}^{\mu}( u )
    \leq
    \lim\nolimits_\omega \mathrm{FillArea}^{\mu}_{X_m}( \gamma_m ).
\end{align*}
\end{theorem}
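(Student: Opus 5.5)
The plan is to replace the nearly optimal fillings of $\gamma_m$ by a $2$-bounded sequence $(v_m\colon\mathbb{D}\to X_m)$ with $\trace(v_m)=\gamma_m$ and $\lim_\omega\mathrm{Area}^\mu(v_m)\le\lim_\omega\mathrm{FillArea}^\mu_{X_m}(\gamma_m)$. Once this is done, \Cref{theorem-ultralimit-construction,theorem-ultralimit-trace-properties} finish the proof. Indeed, let $u=v_\omega\in W^{1,2}(\mathbb{D},X_\omega)$. By \eqref{ultralimit:lower-semicontinuity-volume} with $n=p=2$ we get $\mathrm{Area}^\mu(u)\le\lim_\omega\mathrm{Area}^\mu(v_m)$. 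Since $(\gamma_m)$ is Lipschitz bounded, it is equi-continuous and equi-bounded, so \eqref{ultralimit:compatibility-of-traces} gives $\trace(u)=\gamma$. Everything therefore reduces to producing such $v_m$.

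Set $L=\sup_m\LIP(\gamma_m)$ and $A=\lim_\omega\mathrm{FillArea}^\mu_{X_m}(\gamma_m)$. Pick $\varepsilon_m\to 0$ and fillings $w_m$ of $\gamma_m$ with $\mathrm{Area}^\mu(w_m)\le\mathrm{FillArea}^\mu_{X_m}(\gamma_m)+\varepsilon_m$. Their energy is not controlled, and reparametrizing the domain would destroy the trace. To fix this I would use a mapping cylinder. Let $Z_m$ be $X_m$ with a flat cylinder $\mathbb{S}^1\times[0,\varepsilon_m]$ glued along $\mathbb{S}^1\times\{\varepsilon_m\}$ via $\gamma_m$, carrying the induced length metric. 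Let $c_m=\mathbb{S}^1\times\{0\}$ be the free boundary curve. Concatenating $w_m$ with the cylinder gives a filling of $c_m$ in $Z_m$ of $\mu$-area at most $\mathrm{FillArea}^\mu_{X_m}(\gamma_m)+\varepsilon_m+2\pi\varepsilon_m\cdot\max\{1,L\}^2$.

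Next I apply an $\varepsilon$-conformal (Morrey-type) reparametrization lemma in $Z_m$, as in Lytchak--Wenger. This gives a filling $\tilde w_m$ of $c_m$ satisfying a three-point normalization, with area at most the above bound plus $\varepsilon_m$ and energy $E^2_+(\tilde w_m)\le C(\mathrm{Area}^\mu(\tilde w_m)+\varepsilon_m)$, where $C$ is universal. The trace of $\tilde w_m$ is a weakly monotone reparametrization $c_m\circ\psi_m$. Now let $\pi_m\colon Z_m\to X_m$ be the retraction that is the identity on $X_m$ and sends $(t,s)\mapsto\gamma_m(t)$ on the cylinder. Its Lipschitz constant is bounded by a constant depending only on $L$, independently of $\varepsilon_m$, and it does not increase $\mu$-area on the cylinder because it factors through the curve. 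Set $v_m=\pi_m\circ\tilde w_m$. Then $\trace(v_m)=\gamma_m\circ\psi_m$, the energies are uniformly bounded, and $\mathrm{Area}^\mu(v_m)\le\mathrm{FillArea}^\mu_{X_m}(\gamma_m)+O(\varepsilon_m)$. The $L^2$-bound on $d(p_m,v_m)$ follows from Poincaré together with the bounded trace. Finally, I would precompose $v_m$ with a bi-Lipschitz self-map of $\mathbb{D}$ to undo $\psi_m$ at the boundary. Alternatively, I would use that $\psi_m$ is controlled, by the Courant--Lebesgue lemma and the three-point condition, uniformly in $m$ through the energy bound, and then insert a collar annulus interpolating $\gamma_m\circ\psi_m$ to $\gamma_m$ inside the image of $\gamma_m$. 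Such a collar has zero area, and its energy is bounded in terms of $L$ and the modulus of $\psi_m$.

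The main obstacle is this last step: getting an exact trace $\gamma_m$ while keeping the energy uniformly bounded. The reparametrization $\psi_m$ need not be bi-Lipschitz, and the collar interpolation from $\gamma_m\circ\psi_m$ to $\gamma_m$ has energy comparable to the $W^{1/2,2}$-type size of $\psi_m$. I would first try to arrange the $\varepsilon$-conformal lemma with fixed boundary parametrization inside $Z_m$, exploiting that $c_m$ is an isometric copy of a Euclidean circle with a uniform collar. If that fails, I would keep the traces $\gamma_m\circ\psi_m$ and pass to the limit using \Cref{thm:embedding-mapping-packages} together with the equicontinuity of $(\psi_m)$ coming from Courant--Lebesgue. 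This yields $\trace(u)=\gamma\circ\psi_\omega$ with $\psi_\omega$ weakly monotone, and the parametrization of the trace would then be corrected in the limit by a zero-area collar.
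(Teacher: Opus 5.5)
Your reduction in the first paragraph is correct, and your ingredients match the paper's: mapping cylinder, Morrey's $\varepsilon$-conformality, ultralimit, projection. But the step you flag as the main obstacle is a genuine gap, and your choices (cylinder height $\varepsilon_m\to 0$, projecting to $X_m$ \emph{before} the limit) block every repair you suggest.

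At level $m$ nothing is uniform. The boundary map $\psi_m$ from the $\varepsilon$-conformality lemma is a diffeomorphism, but its bi-Lipschitz constants are uncontrolled. Undoing it by precomposition, or by a collar from $\gamma_m\circ\psi_m$ to $\gamma_m$, costs energy governed by $\psi_m'$ (at best by an $H^{1/2}$-seminorm of $\psi_m$), not by a modulus of continuity. You also cannot prescribe the boundary parametrization in the $\varepsilon$-conformality lemma, since almost conformality essentially fixes it up to Möbius maps.

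The limit version fails for two reasons. First, with height $\varepsilon_m\to 0$ the free circle $c_m$ is not uniformly round in $Z_m$:
\[
d\bigl((t,0),(t',0)\bigr)\le 2\varepsilon_m+d_m(\gamma_m(t),\gamma_m(t')),
\]
which is tiny whenever $\gamma_m$ is nearly constant on an arc or nearly self-intersects. So Courant--Lebesgue with a three-point condition gives no equicontinuity of $(\psi_m)$. Second, the thin cylinder collapses onto $\gamma(\mathbb{S}^1)$ in the limit, and you have already projected. A ``zero-area collar'' must then be a $W^{1,2}$ map into the one-dimensional set $\gamma(\mathbb{S}^1)$, e.g.\ $\gamma\circ H$ with $H$ interpolating $\psi_\omega$ and the identity. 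That needs $\psi_\omega\in H^{1/2}$, which nothing guarantees: where $\gamma$ is constant or backtracks, the trace $\gamma\circ\psi_\omega$ says nothing about $\psi_\omega$. And in a one-dimensional set you cannot fill small curves to localize a correction.

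The missing idea is to keep a two-dimensional flat collar that survives the ultralimit, and to correct the trace there before projecting. The paper glues $[0,3L]\times\mathbb{S}^1(L)$ with $L>\lim_\omega\LIP(\gamma_m)$. Then $X_m\hookrightarrow Y_m$ is isometric, $P_m$ is $1$-Lipschitz, and the top circle $\Gamma_m$ has an isometrically embedded flat $L$-neighbourhood $N$ (your hoped-for ``uniform collar'' holds here, not for your thin cylinder). The three-point condition is imposed on this fixed round circle, so Courant--Lebesgue gives equicontinuous traces. One takes the ultralimit $h_\omega$ in $Y_\omega$, not in $X_\omega$; its trace is a weakly monotone parametrization of the round circle $\Gamma_\omega$, which still carries the flat collar $\iota_\omega(N)$. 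For this single map one modifies $h_\omega$ on finitely many small boundary Jordan domains whose boundary curves have length $<L$, so they lie in the collar and can be refilled there. This yields a bi-Lipschitz reparametrization of $\Gamma_\omega$ as trace, and the homotopy lemma \cite[Lemma 2.6]{Lyt:Wen:16} makes the trace exactly $z\mapsto(3L,Lz)$. Only then does one project by $P_\omega$. Everything mapped into the cylinder lands in $\gamma(\mathbb{S}^1)$, so by the area formula the modifications add no area, and the trace becomes $\gamma$. Since only one limit map is corrected, no uniform-in-$m$ control of the reparametrizations is ever needed.
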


\begin{proof}
Let $L > \lim\nolimits_{\omega} \LIP( \gamma_m )$. We consider the subset $\mathcal{G} \subset \mathbb{N}$ defined by the properties $d( p_m, \gamma_m( \mathbb{S}^1 ) ) \leq 1 + d( p_\omega, \gamma( \mathbb{S}^1 ) )$, $\mathrm{FillArea}^{\mu}( \gamma_m ) \leq 1 + \lim\nolimits_{\omega} \mathrm{FillArea}^{\mu}( \gamma_m )$, and $\LIP( \gamma_m ) \leq L$. It holds that $\omega( \mathcal{G} ) = 1$. For $m \in \N \setminus \mathcal{G}$, we may assume that $X_m$ is a singleton.

Consider $u_m \in W^{1,2}( \mathbb{D}, X_m )$ such that $\mathrm{tr}(u_m) = \gamma_m$, and
\begin{align}\label{equation-filling-area-almost-optimal}
    \mathrm{Area}^{\mu}( u_m ) \leq \mathrm{FillArea}^{\mu}_{X_m}( \gamma_m ) + \frac{ 1 }{ m }.
\end{align}
We lift the sequence $( u_m )$ to the \emph{mapping cylinders} $Y_m$ of $\gamma_m$ next. To this end, equip the cylinder $Z = [0,3L] \times \mathbb{S}^{1}(L) \subset \mathbb{R}^3$ with the intrinsic length distance. Let $Y_m$ be the quotient space obtained by gluing $Z$ to $X_m$ such that $\gamma_m(s)$ is identified with $( 0, Ls )$ for each $s \in \mathbb{S}^{1}$ and equip $Y_m$ with the quotient metric.

Observe that by the choice of $L$, the natural inclusion $X_m \xhookrightarrow{} Y_m$ is isometric and the natural map $\iota_m \colon Z \rightarrow Y_m$ is $1$-Lipschitz and its restriction to $(0,3L] \times \mathbb{S}^{1}(L)$ is injective and a local isometry; in fact, the restriction of $\iota_m$ to the $L$-neighbourhood $N = [2L,3L] \times \mathbb{S}^{1}(L)$ of the top $T = \{3L\} \times \mathbb{S}^{1}(L)$ is an isometric embedding. Also, the projection map $P_m \colon Y_m \to X_m$ fixing $X_m$ pointwise and sending $(s,t)$ to $\gamma_m(L^{-1}t)$ for $(s,t) \in Z$ is $1$-Lipschitz. 

By the above, we may consider $u_m$ as an element of $W^{1,2}( \mathbb{D}, Y_m )$. By standard gluing properties of Sobolev maps \cite[Section 2.2]{Lyt:Wen:16}, we obtain a $W^{1,2}( \mathbb{D}, Y_m )$-mapping
\begin{equation*}
    v_m(z)
    =
    \left\{
    \begin{split}
        &u_m(2z), \quad&&\text{if $|z| < 2^{-1},$}
        \\
        &\left( 3L( 2|z|-1 ), L \frac{z}{|z|} \right), \quad&&\text{if $2^{-1} \leq |z| < 1$.}
    \end{split}
    \right.
\end{equation*}
It holds that
\begin{align*}
    \mathrm{Area}^{\mu}( v_m )
    &=
    \mathrm{Area}^{\mu}( u_m )
    +
    6\pi  L^2.
\end{align*}
By the metric Morrey's $\varepsilon$-conformality lemma \cite{Fitz:Wen:20}, there exists a diffeomorphism up to the boundary $\phi_m \colon \mathbb{D} \rightarrow \mathbb{D}$ such that $h_m = v_m \circ \phi_m$ satisfies
\begin{align*}
    E_{+}^{2}( h_m )
    \leq
    \frac{ 1 }{ m }
    +
    C_\mu \mathrm{Area}^{\mu}( v_m ),
\end{align*}
where $C_\mu$ is a constant depending only on $\mu$. In particular, after identifying the top $\Gamma_m \coloneqq \iota_m( T ) \subset Y_m$ isometrically with $T$ through $\iota_m$, we may suppose that the 3-point condition 
\begin{align*}
    \trace(h_m)( p_j ) = ( 3L, Lp_j )
    \quad\text{for $j \in \{1,2,3\}$ and $m \in \N$}
\end{align*}
holds for a fixed triple $\{p_1, p_2, p_3\}$ from $\mathbb{S}^1$. In particular, the traces $( \mathrm{tr}(h_m) )$ form an equi-continuous family by the Courant--Lebesgue lemma, cf. \cite[Lemma 7.3]{Lyt:Wen:17:areamini}.

It holds that $( h_m )$ is $2$-bounded by Poincaré inequality (see \cite[Lemma 4.11]{Lyt:Wen:17:areamini}) and that
\begin{equation}\label{equation-area-bounds-after-projections}
    \mathrm{Area}^{\mu}( P_m \circ h_m )
    =
    \mathrm{Area}^{\mu}( P_m \circ v_m )
    =
    \mathrm{Area}^{\mu}( u_m ).
\end{equation}
The second equality follows from the area formula \eqref{eq:areaformula:Sobolev}.

Next, we apply the ultralimit theorem, \Cref{theorem-ultralimit-construction}, yielding an ultralimit $h_\omega \in W^{1,2}( \mathbb{D}, Y_\omega )$ of $( h_m )$. Consider the ultralimit $P_\omega \colon Y_\omega \to X_\omega$ of the projections $( P_m )$. It holds that $P_\omega \circ h_\omega$ is an ultralimit of $( P_m \circ h_m )$ by \Cref{theorem-ultralimit-construction}. Moreover, using \Cref{theorem-ultralimit-trace-properties}, we have the following properties: First,
\begin{align}\label{equation-area-bound-preliminary}
    \mathrm{Area}^{\mu}( P_\omega \circ h_\omega )
    \leq
    \lim\nolimits_{\omega}
    \mathrm{Area}^{\mu}( P_m \circ h_m )
    \leq
    \lim\nolimits_{\omega} \mathrm{FillArea}^{\mu}_{X_m}( \gamma_m )
\end{align}
by \eqref{equation-area-bounds-after-projections} and \eqref{equation-filling-area-almost-optimal}. Second, since the traces $( \mathrm{tr}(h_m) )$ form an equi-continuous sequence, the ultralimit $\trace( h_\omega )$ of the traces is a weakly monotone parametrization of $\Gamma_\omega = \iota_\omega( T )$; here $\iota_\omega \colon Z \rightarrow Y_\omega$ is the pointwise ultralimit of $( \iota_m )$.

While $P_\omega \circ h_\omega$ satisfies the correct area bound \eqref{equation-area-bound-preliminary}, its trace is not necessarily $\gamma$. Thus we need to modify $h_\omega$ before projecting down. We will find $v \in W^{1,2}( \mathbb{D}, Y_\omega )$ such that $\trace(v)(z) = ( 3L, L z )$ for $z \in \mathbb{S}^1$ and
\begin{align}\label{equation-initial-ultralimit-bounds-on-area}
    \mathrm{Area}^{\mu}( P_\omega \circ v ) \leq \mathrm{Area}^{\mu}( P_\omega \circ h_\omega ).
\end{align}
Clearly $u \coloneqq P_\omega \circ v$ satisfies the conclusions of the claim by \eqref{equation-initial-ultralimit-bounds-on-area} and \eqref{equation-area-bound-preliminary}.

A simple homotopy argument, cf. \cite[Lemma 2.6]{Lyt:Wen:16}, implies the existence of $v$ if we find $g \in W^{1,2}( \mathbb{D}, Y_\omega )$ satisfying \eqref{equation-initial-ultralimit-bounds-on-area} (with $v$ replaced by $g$) so that $\trace(g)$ is an orientation-preserving reparametrization of $\trace(h_\omega)$ and bi-Lipschitz. The map $g$ is obtained by modifying $h_\omega$ in piecewise smooth Jordan domains $\Omega_1, \Omega_2, \dots, \Omega_{2k}$ formed by a finite number of short circular arcs from circles centered at $\mathbb{S}^1$. The construction is such that $\partial \Omega^{-}_j = \mathbb{S}^1 \cap \partial \Omega$ are connected and cover $\mathbb{S}^1$. By considering suitable circles, we may require by \cite[Lemma 6.4]{stadler-wenger-2025-isoperimetric-inequalities-vs-upper-curvature-bounds} that each trace $\theta_j = \trace( h_\omega|_{\Omega_j} )$ is continuous and of arbitrarily small length; for us, $\ell( \theta_j ) < L$ suffices. To obtain $g|_{\Omega_j}$, we reparametrize $\theta_j$ on $\partial\Omega_j^{-}$ to have constant-speed and fill the resulting curve $\eta_j \colon \partial \Omega_j \to Y_\omega$ using the local isoperimetric inequality on $\iota_\omega(N)$. Here \eqref{equation-initial-ultralimit-bounds-on-area} holds by the area formula \eqref{eq:areaformula:Sobolev} because $\iota_\omega(Z)$ projects to $\gamma( \mathbb{S}^1 )$ using $P_\omega$.
\end{proof}

\subsection{Dehn functions in ultralimits}\label{section-dehn-function-in-ultralimits}
We apply \Cref{thm:stability} now. \Cref{theorem-stability-dehn-function} is a special case of the following general version.
\begin{theorem}\label{theorem-stability-dehn-function-coarse}
Let $r_0$ and $\delta$ be as in \Cref{theorem-stability-dehn-function}. Let $( r_m )$ be a sequence of positive numbers converging to zero and let $( \delta_m \colon (0,r_0) \to (0,\infty) )$ converge to $\delta$ pointwise in $(0,r_0)$. Suppose that $\mu$ is a lower semi-continuous area. If $( X_m, d_m, p_m )$ is a sequence of complete pointed length spaces with
\begin{align*}
    \delta_{X_m}^{\mu}(r) \leq \delta_m(r)
    \quad\text{for every $r \in (r_m,r_0)$},
\end{align*}
then the ultralimit $X_\omega$ satisfies
\begin{align*}
    \delta_{X_\omega}^{\mu}( r ) \leq \delta(r)
    \quad\text{for every $r \in (0,r_0)$.}
\end{align*}
\end{theorem}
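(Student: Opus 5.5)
The plan is to reduce to \Cref{thm:stability} by approximating a given curve in $X_\omega$ by a Lipschitz bounded sequence of curves in the $X_m$. Fix $r\in(0,r_0)$ and a Lipschitz curve $\gamma\colon\mathbb{S}^1\to X_\omega$ with $\ell(\gamma)\le r$. Since $\mathrm{FillArea}^\mu$ is invariant under reparametrization only up to care with traces, I first reduce to the case that $\gamma$ is parametrized proportionally to arc length; a short homotopy/collar argument as in \cite[Lemma 2.6]{Lyt:Wen:16} (gluing a thin annulus whose image lies on $\gamma(\mathbb{S}^1)$ and so has zero area by the area formula \eqref{eq:areaformula:Sobolev}) shows that filling areas agree for $\gamma$ and its constant speed reparametrization. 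Thus $\gamma$ is $\ell(\gamma)/(2\pi)$-Lipschitz.

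Next, fix $\varepsilon>0$ with $r+\varepsilon<r_0$ and choose finitely many points $s_1,\dots,s_N$ in cyclic order on $\mathbb{S}^1$, equally spaced. Write $\gamma(s_i)=[x_m^i]$. For $\omega$-almost every $m$, $\sum_i d_m(x_m^i,x_m^{i+1})<\ell(\gamma)+\varepsilon/2$, and since $X_m$ is a length space I join consecutive points by curves of length at most $d_m(x_m^i,x_m^{i+1})+\varepsilon/(2N)$, each parametrized at constant speed on the arc $[s_i,s_{i+1}]$. This gives closed curves $\gamma_m$ with $\ell(\gamma_m)<r+\varepsilon$, uniformly Lipschitz (constant about $(r+\varepsilon)/(2\pi)\cdot$ a fixed factor, since arcs have equal length and the lengths of the pieces are controlled by the $\omega$-limit distances for fixed $N$), and based near $p_m$, so $(\gamma_m)$ is Lipschitz bounded. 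Its ultralimit $\gamma^N$ agrees with $\gamma$ at the $s_i$, and each arc of $\gamma^N$ has length at most $d_\omega(\gamma(s_i),\gamma(s_{i+1}))+\varepsilon/(2N)$; hence $\gamma^N$ is uniformly close to $\gamma$ with $\sup d_\omega(\gamma^N,\gamma)\lesssim r/N+\varepsilon$.

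Since $r_m\to0$ and $\ell(\gamma_m)<r+\varepsilon$, we get $\mathrm{FillArea}^\mu_{X_m}(\gamma_m)\le\delta_m(r+\varepsilon)$ for $\omega$-a.e.\ $m$, hence by \Cref{thm:stability} $\mathrm{FillArea}^\mu_{X_\omega}(\gamma^N)\le\delta(r+\varepsilon)$. The remaining step, which I expect to be the main obstacle, is passing from $\gamma^N$ back to $\gamma$: we need a filling of the thin annulus between $\gamma^N$ and $\gamma$ with area tending to zero as $N\to\infty$, $\varepsilon\to0$. I would do this inside $X_\omega$ by splitting the annulus into $N$ quadrilaterals, each bounded by the arc of $\gamma$ on $[s_i,s_{i+1}]$, the arc of $\gamma^N$ there (these share endpoints, so the "quadrilateral" is really a bigon), of total length $\lesssim 2r/N+\varepsilon/N$. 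To fill each small bigon with area $o(1/N)$ I would use the hypothesis once more: via the same construction, each bigon is an ultralimit of Lipschitz bounded curves in $X_m$ of length $\ell_N\lesssim (r+\varepsilon)/N$, so by \Cref{thm:stability} its filling area is at most $\delta(\ell_N+\varepsilon')$ — but this only helps if $\delta$ is small near $0$, which is not assumed. So instead I would avoid the annulus altogether: combine everything at the level of $X_m$ by building $\gamma_m$ so that its ultralimit is exactly $\gamma$. Concretely, using \Cref{thm:embedding-mapping-packages}-style diagonal choices with $N=N(m)\to\infty$ along $\omega$ and $\varepsilon=\varepsilon(m)\to0$, define $\gamma_m$ with $N(m)$ points; the uniform Lipschitz bound persists and the ultralimit then equals $\gamma$ everywhere (distance to $\gamma$ at each point is $\lesssim r/N(m)+\varepsilon(m)\to0$ along $\omega$). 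Then \Cref{thm:stability} gives $\mathrm{FillArea}^\mu_{X_\omega}(\gamma)\le\lim_\omega\delta_m(\ell(\gamma_m))$, and it remains to compare this with $\delta(r)$: $\ell(\gamma_m)\le r+\varepsilon(m)$ but $\delta_m$ only converges pointwise. Here right continuity enters: fix $t>r$; for $\omega$-a.e.\ $m$ we have $\ell(\gamma_m)<t$, and the monotonicity of $\delta_{X_m}^\mu$ gives $\mathrm{FillArea}^\mu(\gamma_m)\le\delta_{X_m}^\mu(t)\le\delta_m(t)\to\delta(t)$. Letting $t\downarrow r$ and using right continuity of $\delta$ yields $\delta^\mu_{X_\omega}(r)\le\delta(r)$.
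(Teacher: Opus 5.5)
Your final argument (after you discard the bigon idea) is essentially the paper's: realize $\gamma$ as the ultralimit of a Lipschitz bounded sequence $\gamma_m$ with $\ell(\gamma_m)<t$ for $\omega$-a.e.\ $m$ and each $t\in(r,r_0)$, apply \Cref{thm:stability} using $\mathrm{FillArea}^\mu_{X_m}(\gamma_m)\le\delta_{X_m}^\mu(t)\le\delta_m(t)\to\delta(t)$, and let $t\downarrow r$ by right continuity. The only difference is that the paper takes the approximating sequence from \cite[Corollary 2.6]{Lyt:Wen:You:20} while you build it by hand; there the constant-speed reduction is unnecessary, and the diagonal choice of $N(m)$ must keep representatives coherent across mesh levels (e.g.\ nested meshes with representatives fixed once per grid point, or finitely many distance comparisons built into each $\omega$-full set), since otherwise the estimate $d_\omega(\gamma^N,\gamma)\lesssim r/N+\varepsilon$ does not by itself give $\lim_\omega\gamma_m=\gamma$ pointwise.
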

\begin{proof}
Let $r\in(0,r_0)$ and let $\gamma \colon \mathbb{S}^1\rightarrow X_\omega$ be a Lipschitz curve with $0<\ell(\gamma)\leq r$. Then $\gamma$ is the ultralimit of a Lipschitz bounded sequence of curves $ \gamma_m \colon \mathbb{S}^1 \rightarrow X_m $ satisfying $\lim\nolimits_{\omega} \ell( \gamma_m ) = \ell( \gamma )$, cf. \cite[Corollary 2.6]{Lyt:Wen:You:20}. Let $r<s<r_0$ and note that $r_m<\ell(\gamma_m)<s$ for every $m \in \mathcal{G} \subset \N$ with $\omega( \N \setminus \mathcal{G} ) = 0$. Since $$\mathrm{FillArea}^{\mu}_{X_m}( \gamma_m ) \leq \delta_{X_m}^\mu(s) \leq \delta_m(s)$$ and since $\delta_m(s)\to\delta(s)$, we obtain $\mathrm{FillArea}^{\mu}_{X_\omega}(\gamma) \leq \delta(s)$ from \Cref{thm:stability}. This implies $\delta_{X_\omega}(r) \leq \delta(s)$. By right-continuity of $\delta$, the claim follows.
\end{proof}

\section{Applications}\label{section-applications}

In this section, we prove the applications \Cref{theorem-gromov's-theorem-introduction} and \Cref{theorem-local-cat(kappa)}. As before, $\omega$ will denote a non-principal ultrafilter on $\N$.

\subsection{Gromov hyperbolicity}\label{section-gromov-hyperbolicity}
We formulate and prove the following general version of \Cref{theorem-gromov's-theorem-introduction}. Recall that a complete geodesic metric space $X$ is Gromov hyperbolic if and only if all its asymptotic cones are metric trees, cf. \cite[Proposition 3.1.1]{drutu-2002-quasi-isometry-invariants-and-asymptotic-cones}.

\begin{theorem}\label{theorem-gromov's-theorem}
Suppose that $\mu$ is a lower semi-continuous area and $X$ is a complete geodesic space with
\begin{align*}
    \limsup_{ r \rightarrow \infty } \frac{ \delta_{X}^{\mu}(r) }{ r^2 } < C_\mu.
\end{align*}
Then every asymptotic cone $X_\omega$ of $X$ is a metric tree. That is, $X$ is Gromov hyperbolic.
\end{theorem}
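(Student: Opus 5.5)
Fix an asymptotic cone $X_\omega$, the ultralimit of $(X, \lambda_m d, p_m)$ with $\lambda_m \to 0^{+}$. The plan is to show that the Dehn function of $X_\omega$ vanishes identically, and then deduce that $X_\omega$ is a metric tree.

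\textbf{Step 1: the Dehn function of $X_\omega$ vanishes.} Choose $\varepsilon>0$ and $R_0$ such that $\delta^\mu_X(r)\le (C_\mu-\varepsilon)r^2$ for all $r\ge R_0$. For the rescaled spaces $X_m=(X,\lambda_m d)$, areas scale by $\lambda_m^2$, so
\begin{align*}
\delta^\mu_{X_m}(r)=\lambda_m^2\,\delta^\mu_X(r/\lambda_m)\le (C_\mu-\varepsilon)r^2 \quad\text{for } r\ge r_m:=\lambda_m R_0 \to 0 .
\end{align*}
Applying \Cref{theorem-stability-dehn-function-coarse} with $r_0=\infty$ and $\delta_m=\delta=(C_\mu-\varepsilon)r^2$ (which is continuous) gives $\delta^\mu_{X_\omega}(r)\le (C_\mu-\varepsilon)r^2$ for every $r>0$. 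So $X_\omega$ is a complete geodesic space satisfying a quadratic isoperimetric inequality whose constant lies strictly below the isoperimetric constant $C_\mu$ of \eqref{equation-the-isoperimetric-constant}, and this holds on all scales.

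\textbf{Step 2: exclude Lipschitz circles that span positive area.} I would follow \cite{Lyt:Wen:You:20} (and \cite{wenger-2008-gromov-hyperbolic-spaces-and-the-sharp-isoperimetric-constant}). Suppose $X_\omega$ is not a tree. A geodesic space that is not a metric tree contains a rectifiable Jordan curve $\Gamma$ (a geodesic bigon or triangle that is not degenerate). Consider the Plateau problem for $\Gamma$ in $X_\omega$. Since $X_\omega$ need not be proper, one would solve it in an ultracompletion, or use \Cref{thm:stability} together with the ultralimit Theorems \ref{theorem-ultralimit-construction} and \ref{theorem-ultralimit-trace-properties} on minimizing sequences. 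This yields an area-minimizing disc $u$, energy-minimizing among area minimizers, so that it is infinitesimally quasiconformal.

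\textbf{Step 3: pass to a blow-up and derive the contradiction.} Take a point $z$ of approximate metric differentiability of $u$. There are two cases.
\begin{enumerate}
\item If $\mathrm{md}_z u$ is a norm $s$ for some such $z$, then blowing up $u$ at $z$ (again an ultralimit of rescalings, to which \Cref{theorem-stability-dehn-function-coarse} applies because the inequality $\delta\le (C_\mu-\varepsilon)r^2$ is scale invariant) gives the normed plane $(\R^2,s)$ inside a tangent cone. That tangent cone satisfies the same isoperimetric bound. A round circle of $s$ must then be filled with area below $C_\mu\,\ell^2$. But a $1$-Lipschitz retraction onto the plane is not available. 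Instead I would use the defining property of $C_\mu$, namely that in normed planes fillings of $s$-circles need area at least $C_\mu\ell^2$, combined with the minimality of the plane in any target (\cite{burago-ivanov-2012-minimality-of-planes-in-normed-spaces} type quasiconvexity). This yields a contradiction.
\item Otherwise $\mathbf{J}^\mu(\mathrm{md}\,u)=0$ almost everywhere, so $\mathrm{Area}(u)=0$. Filling small subcurves then shows by the standard argument that $X_\omega$ contains no Jordan curve spanning positive-area minimal discs. From this one concludes directly, via the area lower bound for discs bounded by Jordan curves (\cite[Lemma 6.4]{stadler-wenger-2025-isoperimetric-inequalities-vs-upper-curvature-bounds}-style), that $\Gamma$ cannot exist.
\end{enumerate}

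The hard part is case 1: making rigorous that a small constant forces the absence of planar tangents. I would handle it exactly as in the proper setting of \cite{Lyt:Wen:You:20}, with compactness replaced by the ultralimit machinery. Once every asymptotic cone is known to be a tree, hyperbolicity follows from \cite[Proposition 3.1.1]{drutu-2002-quasi-isometry-invariants-and-asymptotic-cones}.
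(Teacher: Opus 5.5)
Your Step 1 is the paper's first step, and your Case 1 is in spirit the paper's \Cref{lemma-degenerate-area}. However, Case 1 is misstated and its key mechanism is missing, and Case 2 contains no actual contradiction.

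\textbf{Case 1.} The test curve cannot be the unit circle of $s$. The ``defining property'' you quote is false. In $\ell^\infty_2$ with the Busemann--Hausdorff area, the unit square has area $\pi$ and perimeter $8$. Its ratio $\pi/64$ lies below $C_\mu=1/(4\pi)$, so its (minimal) flat filling does not contradict a Dehn bound $(C_\mu-\varepsilon)r^2$. You must use the boundary $c$ of the isoperimetric set $\mathbb{I}_V$, for which $\mu_V(\mathbb{I}_V)\geq C_\mu\ell(c)^2$ by definition of $C_\mu$.

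Moreover, the lower bound $\mathrm{FillArea}^\mu_T(c)\geq\mu_V(\mathbb{I}_V)$ in the tangent cone $T\supset V$ is exactly what needs proof, and the tool you dismiss supplies it. $V$ is not a retract of $T$, but its injective hull $Z$ is injective, so the identity of $V$ extends to a $1$-Lipschitz map $f\colon T\to Z$. Since $1$-Lipschitz maps do not increase $\mathrm{Area}^\mu$, and lower semicontinuity (quasiconvexity) of $\mu$ makes the flat disc minimal in $Z$, you get $\mathrm{FillArea}^\mu_T(c)\geq\mathrm{FillArea}^\mu_Z(c)=\mu_V(\mathbb{I}_V)\geq C_\mu\ell(c)^2$. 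This contradicts the Dehn bound on $T$.

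This argument applies to \emph{every} $u\in W^{1,2}(\mathbb{D},X_\omega)$, so it shows that all Sobolev discs in $X_\omega$ have zero $\mu$-area. No Plateau solution is needed. That matters, because your Step 2 is unjustified. The ultralimit of a minimizing sequence lives in the ultracompletion of $X_\omega$ and is only known to have area at most $\mathrm{FillArea}^\mu_{X_\omega}(\Gamma)$, not to minimize area there. Even in \Cref{proposition-existence-of-Plateau-problem}, under much stronger hypotheses, the paper needs an extra lifting argument for this. Being energy minimizing among area minimizers, and hence quasiconformal, is further out of reach.

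\textbf{Case 2.} This is where the contradiction must come from, and your sketch does not produce it. There is no ``area lower bound for discs bounded by Jordan curves'' to invoke. \cite[Lemma 6.4]{stadler-wenger-2025-isoperimetric-inequalities-vs-upper-curvature-bounds} only provides circles on which traces are continuous and short. The quasiconformality you mention would indeed turn $\mathbf{J}^\mu(\mathrm{md}\,u)=0$ into $\mathrm{md}\,u=0$ a.e., but it rests on the unproved Step 2, and you do not use it anyway.

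The missing idea is the paper's:
\begin{enumerate}
\item Since $\delta^\mu_{X_\omega}(\ell(\Gamma))<\infty$, some $u\in\Lambda(\Gamma,X_\omega)$ exists, and it has zero area.
\item Morrey's $\varepsilon$-conformality lemma \cite{Fitz:Wen:20}, followed by a M\"obius normalization, yields $u_m\in\Lambda(\Gamma,X_\omega)$ satisfying a fixed three-point condition with $E^2_+(u_m)\leq 1/m$.
\item By Courant--Lebesgue the traces are equicontinuous. By \Cref{theorem-ultralimit-trace-properties}, the ultralimit in the ultracompletion $W$ of $X_\omega$ lies in $\Lambda(\Gamma,W)$ and has zero energy, hence is constant.
\end{enumerate}
A constant map cannot have a trace that weakly monotonically parametrizes a Jordan curve, which is the contradiction.
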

The constant $C_\mu$ is defined by
\begin{align}\label{equation-the-isoperimetric-constant}
    C_{\mu} \coloneqq \inf\{ \mu_V( \mathbb{I}_V ) / \ell^{2}( \partial \mathbb{I}_V ) \},
\end{align}
where the infimum is taken over two-dimensional normed planes $V$ where $\mathbb{I}_V \subset V$ is an \emph{isoperimetric set}. Namely, the set whose $\mu$ area is the largest among sets of a fixed perimeter. For instance, for the Holmes--Thompson, Gromov's $\text{mass}^{*}$, the Busemann--Hausdorff, and Ivanov's inscribed Riemannian area, it holds that $C_\mu = 1/(4\pi)$ \cite{alvarez-thompson-2004-volumes-on-normed-and-finsler-spaces}. Thus \Cref{theorem-gromov's-theorem} implies \Cref{theorem-gromov's-theorem-introduction} as claimed.

For the proof of \Cref{theorem-gromov's-theorem}, we need a lemma.
\begin{lemma}\label{lemma-degenerate-area}
Suppose that $\mu$ is a lower semi-continuous area and $X$ is a complete length space with
\begin{align*}
    \limsup_{ r \rightarrow 0 } \frac{ \delta_{X}^{\mu}(r) }{ r^2 } < C_\mu.
\end{align*}
Then $\mathrm{Area}^{\mu}(u) = 0$ for every $u \in W^{1,2}( \mathbb{D}, X )$.
\end{lemma}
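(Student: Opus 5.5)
We argue by contradiction via a blow-up at a point of positive Jacobian, in the spirit of \cite{Lyt:Wen:18:CAT,Lyt:Wen:You:20}. Suppose $u \in W^{1,2}(\mathbb{D},X)$ has $\mathrm{Area}^{\mu}(u)>0$. Then the set of points $z$ where $\mathrm{md}_z(u)$ is a norm $s$ (so $\mathbf{J}^{\mu}(s)>0$) has positive measure. We pick such a $z$ that is also a Lebesgue point and an approximate differentiability point in a quantitative sense. Concretely, for the Haj{\l}asz gradient $g$ of $u$ we require that $z$ is a Lebesgue point of $g^2$. Then on a set of density close to one near $z$, $u$ is Lipschitz and its metric differential approximates $s$.

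\textbf{Blow-up.} Rescale: for $r_m \to 0$ consider $X_m=(X, r_m^{-1}d, u(z))$ and $u_m(y)=u(z+r_m y)$ on $\mathbb{D}$. This sequence is $2$-bounded, with energy of order $r_m^{-2}\int_{B(z,r_m)} g^2$ after rescaling. The hypothesis $\limsup_{r\to0}\delta_X^\mu(r)/r^2<C_\mu$ transfers to the rescaled spaces. Indeed, $\delta^{\mu}_{X_m}(r)=r_m^{-2}\delta^{\mu}_X(r_mr)\le c\, r^2$ for $r$ in $(0, \rho/r_m)$, with some fixed $c<C_\mu$. So \Cref{theorem-stability-dehn-function-coarse} (with $r_0=\infty$, $\delta(r)=cr^2$) gives $\delta^\mu_{X_\omega}(r)\le c r^2$ for all $r$.

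\textbf{Limit map.} By \Cref{theorem-ultralimit-construction} and \Cref{theorem-ultralimit-trace-properties} we obtain $u_\omega\in W^{1,2}(\mathbb{D},X_\omega)$. I expect $u_\omega$ to coincide a.e. with a map $\mathbb{D}\to X_\omega$ that is an isometric embedding of $(\mathbb{D},s)$, up to restriction to a large set. To show this, I would use the Lusin–Lipschitz structure. Let $E$ be the set where $u$ is $\lambda$-Lipschitz, and let $\Phi_m$ be the rescaled Lipschitz extensions into the injective hull. These are Lipschitz bounded, and $u_m$ agrees with them on rescaled sets $E_m$ whose density tends to one. By approximate differentiability, $r_m^{-1}d(u(z+r_m y),u(z+r_m y'))\to s(y-y')$ uniformly for $y,y'\in E_m$. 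Taking ultralimits and using \Cref{lem:bound-ultralimit-set}, the pointwise ultralimit $\Phi_\omega$ satisfies $d_\omega(\Phi_\omega(y),\Phi_\omega(y'))=s(y-y')$ on a full-measure subset of $\mathbb{D}$, hence on all of $\mathbb{D}$ by continuity. Since $u_\omega=\Phi_\omega$ a.e. by the $L^p$-compatibility (iii), $X_\omega$ contains an isometric copy of $(\overline{\mathbb{D}},s)$, that is, a piece of the normed plane $V=(\R^2,s)$.

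\textbf{Contradiction.} Take the isoperimetric set $\mathbb{I}_V$, scaled to lie in this copy, and let $\gamma$ be its boundary curve, which is Lipschitz with length $\ell$. We have $\delta^\mu_{X_\omega}(\ell)\le c\ell^2<C_\mu\ell^2\le\mu_V(\mathbb{I}_V)$. So there is a filling $v$ of $\gamma$ in $X_\omega$ with $\mathrm{Area}^\mu(v)<\mu_V(\mathbb{I}_V)$. This contradicts the fact that a flat normed disc is $\mu$-area minimizing among all Sobolev fillings of its boundary in any metric space containing it isometrically.

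\textbf{Main obstacle.} This last step is the hard one. Since $X_\omega$ is not injective, we cannot directly project onto $V$. I would first embed $X_\omega$ isometrically into an injective space and then use a $1$-Lipschitz retraction onto $V$. This is possible when $V$ is injective, but not in general. So instead I would compose the filling with a $1$-Lipschitz map to $V$ extending the isometry on $\gamma$; such a map is available if $V$'s unit ball is a parallelogram. Otherwise I would use the quasi-convexity of $\mu$ via \cite{Lyt:Wen:17:areamini} to argue that flat discs minimize $\mathrm{Area}^\mu$ among fillings in arbitrary metric spaces. This property, the lower semicontinuity of $\mu$, is the point where it must enter. Once the limit-space construction is in place, this is the one step that needs real care.
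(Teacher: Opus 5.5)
Your overall strategy is the paper's. You blow up at a point where $\mathrm{md}_z(u)$ is a norm $s$ to get a tangent $X_\omega$ containing a flat normed piece, transfer the Dehn bound via \Cref{theorem-stability-dehn-function-coarse}, and compare with $C_\mu$. The paper only cites the standard argument \cite[Proposition 11.2]{Lyt:Wen:17:areamini} for a normed plane $V\subset X_\omega$. Your Lusin--Lipschitz derivation is fine, and an isometric copy of $(\overline{\mathbb{D}},s)$ suffices once $\mathbb{I}_V$ is scaled down. One caveat: the uniform two-point convergence does not follow from the one-point approximate metric derivative. It needs Kirchheim's metric differentiability of the Lipschitz extension $\Phi$ at a.e.\ $z$, which is standard. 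The genuine gap is the last step, which you leave open. You need $\mathrm{FillArea}^{\mu}_{X_\omega}(\gamma)\geq \mu_V(\mathbb{I}_V)$. Both routes you try, a $1$-Lipschitz retraction onto $V$ or a map onto $V$ extending the identity on $\gamma$, work only when $V$ is itself injective. Your parallelogram case is exactly that case, so neither covers a general norm.

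The missing idea is that you need not land in $V$, only in an injective space containing $V$ in which the flat disc is known to be minimizing. Let $Z$ be the injective hull of $V$, as in the paper (or $\ell^\infty$ containing a linear isometric copy of $V$). By injectivity of $Z$, the isometry from your flat piece in $X_\omega$ to $V\subset Z$ extends to a $1$-Lipschitz map $f\colon X_\omega\to Z$. Since $1$-Lipschitz maps do not increase $\mathrm{Area}^{\mu}$, every filling $v$ of $\gamma$ in $X_\omega$ yields a filling $f\circ v$ of $f\circ\gamma$ in $Z$ with no larger area. Lower semicontinuity of $\mu$ then gives $\mathrm{FillArea}^{\mu}_{Z}(f\circ\gamma)=\mathrm{FillArea}^{\mu}_{V}(f\circ\gamma)=\mu_V(\mathbb{I}_V)\geq C_\mu\,\ell(\gamma)^2$. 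This contradicts $\mathrm{FillArea}^{\mu}_{X_\omega}(\gamma)\leq c\,\ell(\gamma)^2$ with $c<C_\mu$, and it is exactly how the paper concludes. Your fallback claim, that flat discs minimize among fillings in arbitrary metric spaces, is true but is proved through this reduction. Quasi-convexity in \cite{Lyt:Wen:17:areamini} concerns normed targets. The Morrey-type argument that turns lower semicontinuity into minimality needs translated and rescaled copies of a competitor, which exist in the normed space $Z$ but not inside $X_\omega$.
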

\begin{proof}
    In case there exists $u \in W^{1,2}( \mathbb{D}, X )$ that satisfies $\mathrm{Area}^{\mu}(u ) > 0$, then the set where the approximate metric differential of $u$ is a norm has positive measure. Then a tangent $X_\omega$ of $X$ contains a two-dimensional normed space $V \subset X_\omega$ by a standard argument, cf. \cite[Proposition 11.2]{Lyt:Wen:17:areamini}. The isoperimetric set $\mathbb{I}_V \subset V$ of perimeter one satisfies
    \begin{align*}
        C_\mu \leq \mu_{V}( \mathbb{I}_V ).
    \end{align*}
    Let $Z$ be the injective hull of $V$ and $f \colon X_\omega \to Z$ $1$-Lipschitz such that $f|_{V} \colon V \to V$ is the identity. Let $c \colon \mathbb{S}^1 \to \partial \mathbb{I}_V$ be a constant speed parametrization. Since $\mu$ is a lower semi-continuous area, it follows that
    \begin{align*}
        \mu_{V}( \mathbb{I}_V ) = \mathrm{FillArea}_{V}^{\mu}( c ) = \mathrm{FillArea}_{Z}^{\mu}( f \circ c ),
    \end{align*}
    so the $1$-Lipschitz property of $f$ implies
    \begin{align*}
        C_\mu \leq \mathrm{FillArea}_{X_\omega}^{\mu}(c).
    \end{align*}
    By the stability result, \Cref{theorem-stability-dehn-function-coarse}, we deduce a contradiction
    \begin{align*}
        C_\mu
        \leq
        \mathrm{FillArea}_{X_\omega}( c )
        \leq
        \left( \limsup_{ r \rightarrow 0^{+} } \frac{ \delta^{\mu}_X(r) }{ r^2 } \right)
        <
        C_\mu.
    \end{align*}
\end{proof}

For the proof of \Cref{theorem-gromov's-theorem} and below, we will use the following definition: given a Jordan curve $\Gamma \subset X$, we define $\Lambda( \Gamma, X )$ to be the set of all $u \in W^{1,2}( \mathbb{D}, X )$ whose trace has a continuous representative which is a weakly monotone parametrization of $\Gamma$. 
\begin{proof}[Proof of \Cref{theorem-gromov's-theorem}]
The aim is to prove that every asymptotic cone $Y$ of $X$ is a metric tree. For this, it suffices to prove that there are no rectifiable Jordan curves $\Gamma \subset Y$. By the stability result, \Cref{theorem-stability-dehn-function-coarse}, every asymptotic cone $Y$ of $X$ satisfies
\begin{align*}
    \delta_{ Y }^{\mu}(r) \leq (1-\varepsilon) C_\mu r^2 \quad\text{for some $\varepsilon \in (0,1)$ and every $r > 0$.}
\end{align*}
Now suppose that $\Gamma \subset Y$ were a rectifiable Jordan curve of length $L > 0$. First $\mathrm{Area}(u) = 0$ for each $u \in \Lambda( \Gamma, Y )$ by \Cref{lemma-property-ET}. Then, using metric Morrey's $\varepsilon$-conformality, we find a sequence $u_m \in \Lambda( \Gamma, Y )$ satisfying a fixed 3-point condition and $E^{2}_{+}(u_m) \leq 1/m$. By Courant--Lebesgue, the sequence $( \trace( u_m ) )$ is equi-continuous, so, for the ultracompletion $Y_\omega$, the ultralimit $u_\omega \colon \Omega \to Y_\omega$ of $( u_m )$ belongs to $\Lambda( \Gamma, Y_\omega )$ and has zero energy, so it must be essentially constant. This is a contradiction because $u_\omega \in \Lambda( \Gamma, Y_\omega )$.
\end{proof}

\subsection{Comparison geometry}\label{section-comparison-geometry}
We prove some preliminary results before the proof of \Cref{theorem-local-cat(kappa)}.
\begin{lemma}\label{lemma-property-ET}
Suppose that $\mu$ is a lower semi-continuous area and that the inner product normed planes are the unique normed planes realizing the isoperimetric constant $C_\mu$. If $X$ is a complete length space with
\begin{align}\label{equation-asymptotic-dehn-function}
    \limsup_{ r \rightarrow 0^{+} } \frac{ \delta_{X}^{\mu}(r) }{ r^2 } \leq C_\mu,
\end{align}
then $X$ has property (ET). In particular, every $u \in W^{1,2}( \mathbb{D}, X )$ satisfies $$\mathrm{Area}^{\mu}(u) = \mathrm{Area}^{\mu_i}(u) = \mathrm{Area}(u).$$
\end{lemma}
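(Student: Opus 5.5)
Property (ET) means that for every $u \in W^{1,2}(\mathbb{D},X)$ the approximate metric derivative $\mathrm{md}_z(u)$ is induced by a (possibly degenerate) inner product at almost every $z$. Once we have this, the ``in particular'' is immediate. If $s$ is a seminorm coming from an inner product, then either $s$ is not a norm, and all Jacobians vanish, or $(\R^2,s)$ is Euclidean. In the Euclidean case every area $\mu$ is by definition induced by Lebesgue measure, so $\mathbf{J}^{\mu}(s)=\mathbf{J}^{\mu^i}(s)=\mathbf{J}(s)$. Integrating over $\mathbb{D}$ gives $\mathrm{Area}^{\mu}(u)=\mathrm{Area}^{\mu^i}(u)=\mathrm{Area}(u)$. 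The work is therefore to prove (ET), and we argue by contradiction, following the argument of \Cref{lemma-degenerate-area}.

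Suppose $u \in W^{1,2}(\mathbb{D},X)$ is such that, on a set of positive measure, $\mathrm{md}_z(u)$ is a norm that is not induced by an inner product. (Degenerate seminorms are automatically Euclidean and cause no problem.) Pick a Lebesgue-type point $z$ of this set at which $u$ is approximately metrically differentiable, and rescale: for a tangent $X_\omega = \lim_\omega (X, \lambda_m d, u(z))$ with $\lambda_m\to\infty$, the standard blow-up argument \cite[Proposition 11.2]{Lyt:Wen:17:areamini} gives an isometric copy of $V=(\R^2,\mathrm{md}_z(u))$ inside $X_\omega$. Next we need the Dehn function of $X_\omega$. The rescaled spaces $(X,\lambda_m d)$ satisfy $\delta^{\mu}_{(X,\lambda_m d)}(r)=\lambda_m^2\,\delta^\mu_X(r/\lambda_m)$. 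Hence, for every $\varepsilon>0$, hypothesis \eqref{equation-asymptotic-dehn-function} gives $\delta^{\mu}_{(X,\lambda_m d)}(r)\le (C_\mu+\varepsilon_m)r^2$ on $(0,r_0\lambda_m)$ with $\varepsilon_m\to 0$. \Cref{theorem-stability-dehn-function-coarse}, applied with $\delta_m(r)=(C_\mu+\varepsilon_m)r^2$ and $\delta(r)=C_\mu r^2$, then yields $\delta^\mu_{X_\omega}(r)\le C_\mu r^2$ for all $r>0$.

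Now take the isoperimetric set $\mathbb{I}_V$ of perimeter one and a constant speed parametrization $c$ of $\partial\mathbb{I}_V$. Because $V$ is not Euclidean and, by hypothesis, only inner product planes realize $C_\mu$, we get $\mu_V(\mathbb{I}_V) > C_\mu$. To bound the filling area in $X_\omega$ from below, use the injective hull $Z$ of $V$ and a $1$-Lipschitz retraction $f\colon X_\omega\to Z$ with $f|_V=\mathrm{id}$, exactly as in \Cref{lemma-degenerate-area}. Since $1$-Lipschitz maps do not increase $\mu$-area, this gives
\[
\mathrm{FillArea}^\mu_{X_\omega}(c)\ge \mathrm{FillArea}^\mu_Z(c)=\mu_V(\mathbb{I}_V) > C_\mu ,
\]
which contradicts $\delta^\mu_{X_\omega}(1)\le C_\mu$.

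The main obstacle is the equality $\mathrm{FillArea}^\mu_Z(c)=\mu_V(\mathbb{I}_V)$, i.e.\ that the isoperimetric set in $V$ is still area-minimizing in the injective hull $Z\supset V$. For this I would rely on the quasi-convexity of $\mu$, which is encoded in its lower semicontinuity, together with \cite{burago-ivanov-2012-minimality-of-planes-in-normed-spaces}-type minimality of flat discs in injective (i.e.\ $\ell^\infty$-like) spaces. This is the same step that \Cref{lemma-degenerate-area} uses, so I would invoke it in the same way; everything else reduces to the stability theorem together with the blow-up.
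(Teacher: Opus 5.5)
Your proposal is correct and follows the paper's proof. The paper also argues by contradiction: it blows up to find a non-Euclidean normed plane $V$ in a tangent and uses the uniqueness hypothesis to get $\mu_V(\mathbb{I}_V)>C_\mu$. It then combines the injective-hull lower bound on the filling area with the coarse stability theorem, exactly as in the proof of \Cref{lemma-degenerate-area}. The rescaling step needs tidying: choose $t_m\to 0$ with $\lambda_m t_m\to\infty$, so that the bound $(C_\mu+\varepsilon_m)r^2$ holds on $(0,\lambda_m t_m)$, and apply the coarse theorem with a finite $r_0>1$ after discarding finitely many $m$.
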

Property (ET), namely that of having \emph{Euclidean tangents}, was introduced in \cite{Lyt:Wen:17:areamini}.
\begin{proof}
    In case $X$ does not have Property (ET), by definition, there exists $u \in W^{1,2}( \mathbb{D}, X )$ for which the set where the approximate metric differential of $u$ is not an inner product norm has positive measure. Then a tangent of $X$ contains a normed plane whose norm is not induced by an inner product. This leads to a contradiction as in the proof of \Cref{lemma-degenerate-area}.
%
\end{proof}

Before presenting the proof of \Cref{theorem-local-cat(kappa)}, we solve the Plateau problem in ultracompletions. To make this precise, we define further notation. The \emph{filling area} of a Jordan curve $\Gamma \subset X$ with respect to an area $\mu$ is
\begin{equation*}
    \mathrm{FillArea}^{\mu}_{X}( \Gamma )
    =
    \inf\left\{
        \mathrm{Area}^{\mu}(u) \colon u \in \Lambda( \Gamma, X )
    \right\}.
\end{equation*}
As in the case of Lipschitz curves, when $\mu$ is omitted, we refer to the parametrized Hausdorff area.

\begin{proposition}\label{proposition-existence-of-Plateau-problem}
Let $\kappa \in \R$ and $r_0 \in (0,2D_\kappa]$. Let $X$ be a complete length space and suppose that $\delta_{X}(r) \leq \delta_{\kappa}(r)$ for $r \in (0,r_0)$. If a rectifiable Jordan curve $\Gamma \subset X$ has length strictly less than $r_0$, then there exists a solution for the Plateau problem for $\Gamma$ in the ultracompletion $X_\omega$. That is, there exists $u_\omega \in \Lambda( \Gamma, X_\omega )$ satisfying
\begin{align*}
    E^{2}_{+}( u_\omega ) = \mathrm{Area}( u_\omega ) = \mathrm{FillArea}_{X_\omega}( \Gamma ).
\end{align*}
\end{proposition}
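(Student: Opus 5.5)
Take a minimizing sequence in $X$ and pass to the ultralimit using \Cref{theorem-ultralimit-construction,theorem-ultralimit-trace-properties}. The stability result, \Cref{theorem-stability-dehn-function}, is what lets us compare filling areas in $X$ and in $X_\omega$.

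\textbf{Step 1: the isoperimetric inequality passes to the ultracompletion.} By \Cref{theorem-stability-dehn-function} (with $\delta = \delta_\kappa$, which is continuous), we get $\delta_{X_\omega}(r) \leq \delta_\kappa(r)$ for $r<r_0$. Since $\limsup_{r\to 0}\delta_\kappa(r)/r^2 = 1/(4\pi)$, \Cref{lemma-property-ET} applies to both $X$ and $X_\omega$. Hence both spaces have property (ET), and $E^2_+(u) \geq 2\,\mathrm{Area}(u)$ for Sobolev discs, with equality exactly for conformal ones.

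\textbf{Step 2: the filling area of $\Gamma$ does not increase in $X_\omega$.} I would show $\mathrm{FillArea}_{X_\omega}(\Gamma) \leq \mathrm{FillArea}_X(\Gamma)$, which is immediate because $X \subset X_\omega$ isometrically. The converse inequality is not needed. Set $a = \mathrm{FillArea}_X(\Gamma)$; it is finite because $\ell(\Gamma)<r_0$ and the Dehn bound holds for a constant speed parametrization. (We may instead take the infimum over $\Lambda(\Gamma, X_\omega)$ directly.)

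\textbf{Step 3: build the minimizer.} The difficulty is that minimizing sequences in $X_\omega$ live in one fixed space, while ultralimits of sequences in $X_\omega$ land in $(X_\omega)_\omega$. So I would run the minimization in $X$ and realize the limit in $X_\omega$. Concretely, I would take $u_m \in \Lambda(\Gamma, X)$ with $\mathrm{Area}(u_m) \to a$. By metric Morrey's $\varepsilon$-conformality \cite{Fitz:Wen:20}, precomposing with diffeomorphisms gives $E^2_+(u_m) \leq 2\,\mathrm{Area}(u_m) + 1/m$, using (ET) so that $C_\mu = 2$ for the Hausdorff area. After a further conformal reparametrization we impose a three-point condition on $\Gamma$. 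Courant--Lebesgue then makes the continuous traces equicontinuous weakly monotone parametrizations of $\Gamma$. The Poincaré inequality makes $(u_m)$ $2$-bounded. Let $u_\omega$ be the ultralimit with respect to the constant sequence $X$.
\begin{itemize}
\item By property \eqref{ultralimit:compatibility-of-traces}, the trace of $u_\omega$ is the pointwise ultralimit of the traces. Equicontinuous weakly monotone parametrizations of the compact curve $\Gamma$ converge, in the ultrafilter sense, to a weakly monotone parametrization of $\Gamma \subset X_\omega$. Hence $u_\omega \in \Lambda(\Gamma, X_\omega)$.
\item By properties \eqref{ultralimit:lower-semicontinuity-energy} and \eqref{ultralimit:lower-semicontinuity-volume}, $E^2_+(u_\omega) \leq 2a$ and $\mathrm{Area}(u_\omega) \leq a$.
\end{itemize}

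\textbf{Step 4: minimality.} This is the main obstacle: we must show $\mathrm{Area}(u_\omega) = \mathrm{FillArea}_{X_\omega}(\Gamma)$, and Step 3 only compares $u_\omega$ with discs in $X$, not with all discs in $X_\omega$. I would first try to replace $a$ by $b = \mathrm{FillArea}_{X_\omega}(\Gamma)$. Take $v_j \in \Lambda(\Gamma, X_\omega)$ with area close to $b$, approximate them by discs in $X$ with nearby boundary (possible on a large set via Lusin--Lipschitz approximations, since $X_\omega$ is an ultralimit of $X$), and repair the boundary mismatch using the mapping-cylinder and short-arc trick from the proof of \Cref{thm:stability}. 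Together with the Dehn function bound, this gives $a \leq b$, hence $a=b$. Alternatively, one can use \Cref{thm:stability} in reverse: every Lipschitz filling problem in $X_\omega$ is controlled by fillings in $X$.

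\textbf{Step 5: conclusion.} Once $\mathrm{Area}(u_\omega) = b$, we have $b \leq \mathrm{Area}(u_\omega) \leq \tfrac12 E^2_+(u_\omega) \leq b$, so equality holds throughout. This gives $u_\omega$ conformal with $E^2_+ = 2\,\mathrm{Area}$; that is, the claimed equality holds with the energy normalized as in the paper's convention (under (ET) the relevant energy equals area for conformal maps).
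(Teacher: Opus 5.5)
\textbf{Verdict: there is a genuine gap.} Your Steps 1--3 and 5 reduce everything to Step 4, the inequality $a=\mathrm{FillArea}_X(\Gamma)\le b=\mathrm{FillArea}_{X_\omega}(\Gamma)$, and none of the tools you name proves it.

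The ``alternative'' runs in the wrong direction. \Cref{thm:stability} is lower semicontinuity, $\mathrm{FillArea}_{X_\omega}(\gamma)\le\lim\nolimits_\omega\mathrm{FillArea}_{X_m}(\gamma_m)$. For the constant sequence this only gives $b\le a$, which you already have from $X\subset X_\omega$. What you need is upper semicontinuity: transporting fillings from $X_\omega$ back to $X$ with almost no loss of area.

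Lusin--Lipschitz approximations do not do this. They produce Lipschitz maps into $X_\omega$ (or its injective hull). Unlike curves (cf.\ \cite[Corollary 2.6]{Lyt:Wen:You:20}), a two-dimensional map into $X_\omega$ cannot be pulled back to $X$ pointwise; you have to rebuild it over the $2$-cells of a fine decomposition inside $X$. Filling each small cell with the Dehn bound loses a definite factor. Already for a conformal affine map on a square, $\frac{1}{4\pi}\ell(\partial Q)^2$ exceeds the area of the image by $\frac{4}{\pi}$. So the sum is comparable to the energy of $v_j$, not to $\mathrm{Area}(v_j)+\varepsilon$. Your hypotheses also give no local Lipschitz $1$-connectedness of $X$ with which to organise such a reconstruction. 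The mapping-cylinder and short-arc trick of \Cref{thm:stability} only corrects traces inside the limit space and plays no role here.

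The idea you are missing is to not minimize in $X$ at all. By \cite[Proposition 4.1]{stadler-wenger-2025-isoperimetric-inequalities-vs-upper-curvature-bounds} there are complete length spaces $Y_m$ containing $X$ isometrically with three properties:
\begin{enumerate}
\item $Y_m$ lies in the $\frac1m$-neighbourhood of $X$, so the ultralimit of $(Y_m,p)$ is still $X_\omega$;
\item $Y_m$ is $L$-Lipschitz $1$-connected up to scale $\frac{1}{Lm}$;
\item $\delta^{\mu_i}_{Y_m}(r)\le\delta_\kappa(r)+Lr^2$, improving to $\delta_\kappa(r)+r^2/\sqrt m$ for $r>1/\sqrt m$.
\end{enumerate}
The paper takes $u_m\in\Lambda(\Gamma,Y_m)$ with $E^2_+(u_m)\le\mathrm{FillArea}^{\mu_i}_{Y_m}(\Gamma)+\frac1m$ via Morrey and a three-point condition, and passes to the ultralimit. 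The area $\mu_i$ is used because $Y_m$ need not have (ET).

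The upper semicontinuity $\lim\nolimits_\omega\mathrm{FillArea}^{\mu_i}_{Y_m}(c)\le\mathrm{FillArea}^{\mu_i}_{X_\omega}(c)$ then comes from the lifting theorem \cite[Theorem 5.1]{Wen:2019}. This is where the local Lipschitz $1$-connectedness and the almost Euclidean Dehn bounds of $Y_m$ enter. It is combined with $\mathrm{FillArea}(\Gamma)=\mathrm{FillArea}(c)$ from \cite[Lemma 4.8]{Lyt:Wen:18:intrinsic}, which closes the chain
\[ \mathrm{Area}(u_\omega)\le E^2_+(u_\omega)\le\lim\nolimits_\omega\mathrm{FillArea}^{\mu_i}_{Y_m}(\Gamma)\le\mathrm{FillArea}_{X_\omega}(\Gamma)\le\mathrm{Area}(u_\omega). \]
Note that the paper never needs, or claims, your inequality $a\le b$.

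Separately, fix your normalization. $E^2_+$ is the Reshetnyak energy, so $E^2_+\ge\mathrm{Area}^{\mu_i}$, with equality for infinitesimally conformal maps. Morrey's lemma gives $E^2_+(u\circ\phi)\le\mathrm{Area}^{\mu_i}(u)+\varepsilon$. With your factor $2$, Step 5 would contradict the claimed equality $E^2_+(u_\omega)=\mathrm{Area}(u_\omega)$.
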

\begin{proof}
By the stability, \Cref{theorem-stability-dehn-function}, the ultracompletion $X_\omega$ satisfies
\begin{align}\label{equation-ultralimit-dehn-function-plateau}
    \delta_{X_\omega}(r) \leq \delta_\kappa(r) \quad\text{for $r \in (0,r_0)$.}
\end{align}
Consider a Jordan curve $\Gamma \subset X$ with length strictly less than $r_0$. Let $c \colon \mathbb{S}^1 \to \Gamma$ be a constant speed parametrization. Given \eqref{equation-ultralimit-dehn-function-plateau}, we obtain
\begin{align*}
    \mathrm{FillArea}_{X_\omega}( \Gamma ) = \mathrm{FillArea}_{X_\omega}( c )
\end{align*}
from \cite[Lemma 4.8]{Lyt:Wen:18:intrinsic}. Furthermore,
\begin{align*}
    \mathrm{Area}(v)
    =
    \mathrm{Area}^{\mu_i}(v)
    \quad\text{and}\quad
    \mathrm{FillArea}^{\mu_i}_{X_\omega}(c) = \mathrm{FillArea}_{X_\omega}(c)
\end{align*}
whenever $v \in W^{1,2}( \mathbb{D}, X_\omega )$ by \Cref{lemma-property-ET}. 
    
By \cite[Proposition 4.1]{stadler-wenger-2025-isoperimetric-inequalities-vs-upper-curvature-bounds} (cf. the proof of \cite[Proposition 3.5]{Lyt:Wen:You:20}), there exists a sequence of complete length spaces $( Y_m )$ such that $X \hookrightarrow Y_m$ isometrically and the following holds: First, the $m^{-1}$-neighbourhood of $X$ in $Y_m$ is $Y_m$. Second, $Y_m$ is $L$-Lipschitz $1$-connected up to scale $\frac{ 1 }{ L m }$. Third,
\begin{align*}
    \delta^{ \mu_i }_{ Y_m }(r) \leq \delta_{\kappa}(r) + L r^2 \quad\text{for $r \in (0, r_0)$}
\end{align*}
and that
\begin{align*}
    \delta^{ \mu_i }_{ Y_m }(r) \leq \delta_{\kappa}(r) + \frac{ 1 }{ \sqrt{m} } r^2 \quad\text{for $r \in \left( \frac{1}{\sqrt{m}}, r_0 \right)$,}
\end{align*}
where $L$ is a universal constant.

By metric Morrey's $\varepsilon$-conformality, there exists $u_m \in \Lambda( \Gamma, Y_m )$ so that
\begin{align*}
    E^{2}_{+}( u_m ) \leq \mathrm{FillArea}^{\mu_i}_{Y_m}( \Gamma ) + \frac{ 1 }{ m }
\end{align*}
and $\trace( u_m )$ satisfies a fixed $3$-point condition. By Courant--Lebesgue, the family $( \trace( u_m ) )$ is equi-continuous.

Let $X_\omega$ be an ultralimit $( Y_m, d_m, p )$ for $p \in X \subset Y_m$. Notice that $X_\omega$ is isometric to the ultracompletion of $X$. The ultralimit $u_\omega \in W^{1,2}( \mathbb{D}, X_\omega )$ of $( u_m )$ has the following key properties. First, the trace $\trace( u_\omega )$ is an ultralimit of $( \trace( u_m ) )$ and thus $u_\omega \in \Lambda( \Gamma, X_\omega )$. Second,
\begin{align*}
    \mathrm{Area}(u_\omega) = \mathrm{Area}^{\mu_i}( u_\omega ) \leq E^{2}_{+}( u_\omega ) \leq \lim\nolimits_{\omega} E^{2}_{+}( u_m )
\end{align*}
and
\begin{align*}
    \lim\nolimits_{\omega} E^{2}_{+}( u_m )
    \leq
    \lim\nolimits_{\omega} \mathrm{FillArea}^{\mu_i}_{Y_m}( \Gamma ).
\end{align*}
Third, the lifting result, \cite[Theorem 5.1]{Wen:2019}, extends to
\begin{align*}
    \lim\nolimits_{\omega} \mathrm{FillArea}^{\mu_i}_{Y_m}( c ) \leq \mathrm{FillArea}^{\mu_i}_{X_\omega}(c)
\end{align*}
with minor modifications. Finally, since $\mathrm{FillArea}^{\mu_i}_{Y_m}( \Gamma ) = \mathrm{FillArea}^{\mu_i}_{Y_m}( c )$ by \cite[Lemma 4.8]{Lyt:Wen:18:intrinsic}, we have
\begin{align*}
    \lim\nolimits_{\omega} \mathrm{FillArea}^{\mu_i}_{Y_m}( \Gamma ) \leq \mathrm{FillArea}_{ X_\omega }( \Gamma ) \leq \mathrm{Area}( u_\omega ).
\end{align*}
The claim follows from the chains of inequalities.
\end{proof}

Before the proof of \Cref{theorem-local-cat(kappa)}, we recall the definition of upper curvature bounds on metric spaces. A \emph{geodesic triangle} $\Delta \subset X$ is a union of three geodesics $\gamma_1$, $\gamma_2$, and $\gamma_3$, joining the \emph{vertices} $p_1$ to $p_2$, $p_2$ to $p_3$, and $p_3$ to $p_1$, respectively. The \emph{perimeter} of $\Delta$ is $\ell( \gamma_1 ) + \ell( \gamma_2 ) + \ell( \gamma_3 )$.

Given $\kappa \in \R$, a geodesic triangle $\Delta$ with perimeter $< 2D_\kappa$ satisfies the $\mathrm{CAT}(\kappa)$ comparison if, given a comparison triangle $\Delta' \subset M^{2}_\kappa$, the \emph{comparison map} $\Delta' \to \Delta$, sending sides to sides isometrically, is $1$-Lipschitz. We say that a metric space $X$ is $\mathrm{CAT}(\kappa)$ if every geodesic triangle with perimeter $< 2D_\kappa$ satisfies the $\mathrm{CAT}(\kappa)$ comparison and every $x, y \in X$ with $d(x,y) < D_\kappa$ can be joined with a geodesic; see \cite{bridson-haefliger-1999-metric-spaces-of-non-positive-curvature} (and \cite{alexander-kapovitch-petrunin-2024-alexandrov-geometry-foundations}) for further background. A \emph{Jordan triangle} is a geodesic triangle so that the sides overlap only at their end points. In particular, a Jordan triangle is homeomorphic to $\S^1$.

\begin{proof}[Proof of \Cref{theorem-local-cat(kappa)}]
    We first prove the following claim: Let $X$ be a complete length space with $\delta_{X}(r) \leq \delta_{\kappa}(r)$ for $r \in (0,r_0)$. Then any geodesic triangle $\Delta \subset X$ with perimeter $< r_0$ satisfies the $\mathrm{CAT}(\kappa)$ comparison.

    Arguing as in \cite[Lemma 3.1]{Lyt:Wen:18:CAT}, it suffices to consider a Jordan triangle $\Delta$ with perimeter $< r_0$. Consider the ultracompletion $Y = X_\omega$. By the stability, \Cref{theorem-stability-dehn-function}, the ultracompletion satisfies
    \begin{align}\label{equation-ultralimit-dehn-function}
	   \delta_{Y}(r) \leq \delta_\kappa(r) \quad\text{for $r \in (0,r_0)$.}
    \end{align}
    Let $u \in \Lambda(\Delta, Y)$ be a solution to the Plateau problem obtained from \Cref{proposition-existence-of-Plateau-problem}. It holds by \eqref{equation-ultralimit-dehn-function} that $u$ has a continuous representative up to the boundary, cf. \cite[Theorem 9.1]{Lyt:Wen:17:areamini}. Moreover, $u$ factors through an intrinsic disk $Z$: by \cite[Theorem 1.1]{Lyt:Wen:18:intrinsic}, there exists a geodesic space $Z$ homeomorphic to $\overline{\mathbb{D}}$, a continuous and monotone $P_u \colon \overline{\mathbb{D}} \to Z$, and $1$-Lipschitz $\overline{u} \colon Z \to X_\omega$ satisfying $u = \overline{u} \circ P_u$, and that the lengths of $u \circ \gamma$ and $P_u \circ \gamma$ are equal for every rectifiable $\gamma$. Hence it suffices to verify the comparison estimate for $\partial Z$. We claim that
\begin{align}\label{equation-intrinsic-CAT-kappa-property}
    \mathcal{H}^{2}( \Omega ) \leq \delta_{\kappa}( \ell( \partial \Omega ) )
    \quad\text{for Jordan domains $\Omega \subset Z$ with $\ell( \partial \Omega ) < 2D_\kappa$.}
\end{align}
    To this end, when $\ell( \partial \Omega ) < r_0$, inequality \eqref{equation-intrinsic-CAT-kappa-property} follows from \eqref{equation-ultralimit-dehn-function} by the proof of \cite[Theorem 8.2]{Lyt:Wen:18:intrinsic} with minor modifications. If $r_0 \leq \ell( \partial \Omega ) < 2D_\kappa$ instead, we observe $\mathcal{H}^{2}(\Omega) \leq \mathcal{H}^{2}(Z) \leq \delta_\kappa( \ell(\partial Z) ) \leq \delta_{\kappa}( \ell(\partial \Omega) )$. Then, by the proof of \cite[Theorem 1.4]{Lyt:Wen:20}, we conclude    $\delta_{Z}(r) \leq \delta_{\kappa}(r)$ for $r \in (0,2D_\kappa)$. Thus, by \cite[Theorem 1.4]{Lyt:Wen:18:CAT}, $Z$ is $\mathrm{CAT}(\kappa)$. The proof of the claim is complete.

    To finish, we first assume that $X$ is geodesic. If $r_0 = 2 D_\kappa$, $X$ is $\mathrm{CAT}(\kappa)$ by the claim on geodesic triangles. If $r_0 < 2D_\kappa$, we proceed as follows. For all $x, y \in X$ with $d(x,y) < r_0/2$, the claim on geodesic triangles implies that the geodesic $\gamma_{xy}$ joining $x$ to $y$ is unique and moreover depends continuously on the end points. Furthermore, if $x, y$ are contained in an open ball $B = B_{X}( x_0, r_0/4 )$, then $\gamma_{xy}$ lies in $B$. In particular, $B$ is geodesic and locally $\mathrm{CAT}(\kappa)$. By \cite[Radial Lemma, Lemma 9.52]{alexander-kapovitch-petrunin-2024-alexandrov-geometry-foundations}, $B$ is $\mathrm{CAT}(\kappa)$ and it is not difficult to see that the closed ball $\overline{B}$, which coincides with the closure of $B$, is geodesic and $\mathrm{CAT}(\kappa)$, as claimed.

    It remains to consider the case when $X$ is merely a complete length space. Then its ultracompletion $Y$ is geodesic and satisfies the conclusions of the previous steps. Since $X$ is a complete length space, it easily follows that for any $x,y \in X$ with $d(x,y) < r_0/2$, the unique geodesic $\gamma_{xy}$ in $Y$ is, in fact, contained in $X$. In case $r_0 = 2D_\kappa$, the proof is complete. If $r_0 < 2D_\kappa$, then any closed ball $\overline{B}_{X}( x_0, r_0/4 )$ is a geodesic subset of the $\mathrm{CAT}(\kappa)$ space $\overline{B}_{Y}( x_0, r_0/4 )$. Thus $\overline{B}_{X}( x_0, r_0/4 )$ is $\mathrm{CAT}(\kappa)$.
\end{proof}

\begin{remark}
\Cref{proposition-existence-of-Plateau-problem}, and thus \Cref{theorem-local-cat(kappa)}, clearly holds if the Busemann--Hausdorff area is replaced by any lower semi-continuous area $\mu$ for which the inner product normed planes are the unique normed planes realizing the isoperimetric constant $C_\mu = 1/(4\pi)$. This is immediate from \Cref{lemma-property-ET}.
\end{remark}

\bibliographystyle{alpha}

\begin{thebibliography}{GHWX22}

\bibitem[AKP24]{alexander-kapovitch-petrunin-2024-alexandrov-geometry-foundations}
Stephanie Alexander, Vitali Kapovitch, and Anton Petrunin.
\newblock {\em Alexandrov geometry---foundations}, volume 236 of {\em Graduate Studies in Mathematics}.
\newblock American Mathematical Society, Providence, RI, [2024] \copyright 2024.

\bibitem[APT04]{alvarez-thompson-2004-volumes-on-normed-and-finsler-spaces}
J.~C. \'Alvarez~Paiva and A.~C. Thompson.
\newblock Volumes on normed and {F}insler spaces.
\newblock In {\em A sampler of {R}iemann-{F}insler geometry}, volume~50 of {\em Math. Sci. Res. Inst. Publ.}, pages 1--48. Cambridge Univ. Press, Cambridge, 2004.

\bibitem[Bat22]{bate-2022-characterising-rectifiable-metric-spaces-using-tangent-spaces}
David Bate.
\newblock Characterising rectifiable metric spaces using tangent spaces.
\newblock {\em Invent. Math.}, 230(3):995--1070, 2022.

\bibitem[BH99]{bridson-haefliger-1999-metric-spaces-of-non-positive-curvature}
Martin~R. Bridson and Andr\'e Haefliger.
\newblock {\em Metric spaces of non-positive curvature}, volume 319 of {\em Grundlehren der mathematischen Wissenschaften [Fundamental Principles of Mathematical Sciences]}.
\newblock Springer-Verlag, Berlin, 1999.

\bibitem[BI12]{burago-ivanov-2012-minimality-of-planes-in-normed-spaces}
Dmitri Burago and Sergei Ivanov.
\newblock Minimality of planes in normed spaces.
\newblock {\em Geom. Funct. Anal.}, 22(3):627--638, 2012.

\bibitem[BNS22]{brue-naber-semola-2022-boundary-regularity-and-stability-for-spaces-with-ricci-bounded-below}
Elia Bru\`e, Aaron Naber, and Daniele Semola.
\newblock Boundary regularity and stability for spaces with {R}icci bounded below.
\newblock {\em Invent. Math.}, 228(2):777--891, 2022.

\bibitem[BPS25]{brue-pigati-semola-2025-topological-regularity-and-stability-of-noncollapsed-spaces-with-ricci-curvature-bounded-below}
Elia Bruè, Alessandro Pigati, and Daniele Semola.
\newblock Topological regularity and stability of noncollapsed spaces with ricci curvature bounded below, 2025.
\newblock cvgmt preprint.

\bibitem[CC97]{cheeger-colding-1997-on-the-structure-of-spaces-with-Ricci-curvature-bounded-below-I}
Jeff Cheeger and Tobias~H. Colding.
\newblock On the structure of spaces with {R}icci curvature bounded below. {I}.
\newblock {\em J. Differential Geom.}, 46(3):406--480, 1997.

\bibitem[CC00a]{cheeger-colding-1997-on-the-structure-of-spaces-with-Ricci-curvature-bounded-below-II}
Jeff Cheeger and Tobias~H. Colding.
\newblock On the structure of spaces with {R}icci curvature bounded below. {II}.
\newblock {\em J. Differential Geom.}, 54(1):13--35, 2000.

\bibitem[CC00b]{cheeger-colding-1997-on-the-structure-of-spaces-with-Ricci-curvature-bounded-below-III}
Jeff Cheeger and Tobias~H. Colding.
\newblock On the structure of spaces with {R}icci curvature bounded below. {III}.
\newblock {\em J. Differential Geom.}, 54(1):37--74, 2000.

\bibitem[CE24]{Creu:Evs:24}
Paul Creutz and Nikita Evseev.
\newblock An approach to metric space-valued {S}obolev maps via weak{$*$} derivatives.
\newblock {\em Anal. Geom. Metr. Spaces}, 12(1):Paper No. 20230107, 16, 2024.

\bibitem[Cre22]{Creu:22}
Paul Creutz.
\newblock Plateau's problem for singular curves.
\newblock {\em Comm. Anal. Geom.}, 30(8):1779--1792, 2022.

\bibitem[DM21]{daskalopoulos-mese-2021-rigidity-of-teichmuller-space}
Georgios Daskalopoulos and Chikako Mese.
\newblock Rigidity of {T}eichm\"uller space.
\newblock {\em Invent. Math.}, 224(3):791--916, 2021.

\bibitem[DMV11]{daskalopoulos-mese-2011-superrigidity-of-hyperbolic-buildings}
Georgios Daskalopoulos, Chikako Mese, and Alina Vdovina.
\newblock Superrigidity of hyperbolic buildings.
\newblock {\em Geom. Funct. Anal.}, 21(4):905--919, 2011.

\bibitem[Dru02]{drutu-2002-quasi-isometry-invariants-and-asymptotic-cones}
Cornelia Dru{\c t}u.
\newblock Quasi-isometry invariants and asymptotic cones.
\newblock volume~12, pages 99--135. 2002.
\newblock International Conference on Geometric and Combinatorial Methods in Group Theory and Semigroup Theory (Lincoln, NE, 2000).

\bibitem[DS05]{drutu-sapir-2005-tree-graded-spaces-and-asymptotic-cones-of-groups}
Cornelia Dru{\c t}u and Mark Sapir.
\newblock Tree-graded spaces and asymptotic cones of groups.
\newblock {\em Topology}, 44(5):959--1058, 2005.
\newblock With an appendix by Denis Osin and Mark Sapir.

\bibitem[Fed69]{Fed:69}
Herbert Federer.
\newblock {\em Geometric measure theory}.
\newblock Die Grundlehren der mathematischen Wissenschaften, Band 153. Springer-Verlag New York Inc., New York, 1969.

\bibitem[FW20]{Fitz:Wen:20}
Martin Fitzi and Stefan Wenger.
\newblock Morrey's {$\varepsilon$}-conformality lemma in metric spaces.
\newblock {\em Proc. Amer. Math. Soc.}, 148(10):4285--4298, 2020.

\bibitem[GHR03]{gersten-holt-riley-2003-isoperimetric-inequalities-for-nilpotent-groups}
S.~M. Gersten, D.~F. Holt, and T.~R. Riley.
\newblock Isoperimetric inequalities for nilpotent groups.
\newblock {\em Geom. Funct. Anal.}, 13(4):795--814, 2003.

\bibitem[GHWX22]{guo-huang-wang-xu-2022-p-harmonic-mappings-between-metric-spaces}
Chang-Yu Guo, Manzi Huang, Zhuang Wang, and Haiqing Xu.
\newblock {$p$}-harmonic mappings between metric spaces.
\newblock {\em Math. Z.}, 302(3):1797--1819, 2022.

\bibitem[GPW90]{grove-petersen-wu-1990-geometric-finiteness-theorems-via-controlled-topology}
Karsten Grove, Peter Petersen, V, and Jyh~Yang Wu.
\newblock Geometric finiteness theorems via controlled topology.
\newblock {\em Invent. Math.}, 99(1):205--213, 1990.

\bibitem[Gro81]{gromov-1981-groups-of-polynomial-growth-and-expanding-maps}
Mikhael Gromov.
\newblock Groups of polynomial growth and expanding maps.
\newblock {\em Inst. Hautes \'Etudes Sci. Publ. Math.}, (53):53--73, 1981.

\bibitem[Gro87]{gromov-1987-hyperbolic-groups}
M.~Gromov.
\newblock Hyperbolic groups.
\newblock In {\em Essays in group theory}, volume~8 of {\em Math. Sci. Res. Inst. Publ.}, pages 75--263. Springer, New York, 1987.

\bibitem[Gro93]{gromov-1993-asymptotic-invariants-of-infinite-groups}
M.~Gromov.
\newblock Asymptotic invariants of infinite groups.
\newblock In {\em Geometric group theory, {V}ol.\ 2 ({S}ussex, 1991)}, volume 182 of {\em London Math. Soc. Lecture Note Ser.}, pages 1--295. Cambridge Univ. Press, Cambridge, 1993.

\bibitem[Gro07]{Gro:07}
Misha Gromov.
\newblock {\em Metric structures for {R}iemannian and non-{R}iemannian spaces}.
\newblock Modern Birkh\"{a}user Classics. Birkh\"{a}user Boston, Inc., Boston, MA, english edition, 2007.
\newblock Based on the 1981 French original, With appendices by M. Katz, P. Pansu and S. Semmes, Translated from the French by Sean Michael Bates.

\bibitem[GS92]{gromov-schoen-1992-harmonic-maps-into-singular-spaces-and-p-adic-superrigidity-for-lattices-in-groups-of-rank-one}
Mikhail Gromov and Richard Schoen.
\newblock Harmonic maps into singular spaces and {$p$}-adic superrigidity for lattices in groups of rank one.
\newblock {\em Inst. Hautes \'Etudes Sci. Publ. Math.}, (76):165--246, 1992.

\bibitem[GW20]{guo-wenger-2020-area-minimizing-discs-in-locally-non-compact-metric-spaces}
Chang-Yu Guo and Stefan Wenger.
\newblock Area minimizing discs in locally non-compact metric spaces.
\newblock {\em Comm. Anal. Geom.}, 28(1):89--112, 2020.

\bibitem[Haj96]{Haj:96}
Piotr Haj{\l}asz.
\newblock Sobolev spaces on an arbitrary metric space.
\newblock {\em Potential Anal.}, 5(4):403--415, 1996.

\bibitem[HKST01]{Hei:Kos:Sha:Ty:01}
Juha Heinonen, Pekka Koskela, Nageswari Shanmugalingam, and Jeremy~T. Tyson.
\newblock Sobolev classes of {B}anach space-valued functions and quasiconformal mappings.
\newblock {\em J. Anal. Math.}, 85:87--139, 2001.

\bibitem[HKST15]{Hei:Kos:Sha:Ty:15}
Juha Heinonen, Pekka Koskela, Nageswari Shanmugalingam, and Jeremy~T. Tyson.
\newblock {\em Sobolev spaces on metric measure spaces}, volume~27 of {\em New Mathematical Monographs}.
\newblock Cambridge University Press, Cambridge, 2015.
\newblock An approach based on upper gradients.

\bibitem[HS14]{hajlasz-schikorra-2014-lipschitz-homotopy-and-density-of-lipschitz-mappings-in-sobolev-spaces}
Piotr Haj{\l}asz and Armin Schikorra.
\newblock Lipschitz homotopy and density of {L}ipschitz mappings in {S}obolev spaces.
\newblock {\em Ann. Acad. Sci. Fenn. Math.}, 39(2):593--604, 2014.

\bibitem[Isb64]{isbell-1964-six-theorems-about-injective-metric-spaces}
J.~R. Isbell.
\newblock Six theorems about injective metric spaces.
\newblock {\em Comment. Math. Helv.}, 39:65--76, 1964.

\bibitem[Iva08]{ivanov-2008-volumes-and-areas-lipschitz-metrics}
S.~V. Ivanov.
\newblock Volumes and areas of {L}ipschitz metrics.
\newblock {\em Algebra i Analiz}, 20(3):74--111, 2008.

\bibitem[Jos94]{jost-1994-equilibrium-maps-between-metric-spaces}
J\"urgen Jost.
\newblock Equilibrium maps between metric spaces.
\newblock {\em Calc. Var. Partial Differential Equations}, 2(2):173--204, 1994.

\bibitem[Kar07]{Karm:07}
M.~B. Karmanova.
\newblock Area and co-area formulas for mappings of the {S}obolev classes with values in a metric space.
\newblock {\em Sibirsk. Mat. Zh.}, 48(4):778--788, 2007.

\bibitem[KKL98]{kapovich-kleiner-1998-quasi-isometries-and-the-de-rham-decomposition}
Michael Kapovich, Bruce Kleiner, and Bernhard Leeb.
\newblock Quasi-isometries and the de {R}ham decomposition.
\newblock {\em Topology}, 37(6):1193--1211, 1998.

\bibitem[KL95]{kapovich-leeb-1995-on-asymptotic-cones-and-quasi-isometry-classes-of-fundamental-groups-of-3-manifolds}
M.~Kapovich and B.~Leeb.
\newblock On asymptotic cones and quasi-isometry classes of fundamental groups of {$3$}-manifolds.
\newblock {\em Geom. Funct. Anal.}, 5(3):582--603, 1995.

\bibitem[KL97]{kleiner-leeb-1997-rigidity-of-quasi-isometries-for-symmetric-spaces-and-euclidean-buildings}
Bruce Kleiner and Bernhard Leeb.
\newblock Rigidity of quasi-isometries for symmetric spaces and {E}uclidean buildings.
\newblock {\em Inst. Hautes \'Etudes Sci. Publ. Math.}, (86):115--197, 1997.

\bibitem[KS93]{Kor:Sch:93}
Nicholas~J. Korevaar and Richard~M. Schoen.
\newblock Sobolev spaces and harmonic maps for metric space targets.
\newblock {\em Comm. Anal. Geom.}, 1(3-4):561--659, 1993.

\bibitem[KSTT05]{kramer-shelah-tent-thomas-2005-asymptotic-cones-of-finitely-presented-groups}
Linus Kramer, Saharon Shelah, Katrin Tent, and Simon Thomas.
\newblock Asymptotic cones of finitely presented groups.
\newblock {\em Adv. Math.}, 193(1):142--173, 2005.

\bibitem[LS05]{lang-schlichenmaier-2005-nagata-dimension-quasisymmetric-embeddings-and-lipschitz-extensions}
Urs Lang and Thilo Schlichenmaier.
\newblock Nagata dimension, quasisymmetric embeddings, and {L}ipschitz extensions.
\newblock {\em Int. Math. Res. Not.}, (58):3625--3655, 2005.

\bibitem[LV09]{lott-villani-2009-ricci-curvature-for-metric-measure-spaces-via-optimal-transport}
John Lott and C\'edric Villani.
\newblock Ricci curvature for metric-measure spaces via optimal transport.
\newblock {\em Ann. of Math. (2)}, 169(3):903--991, 2009.

\bibitem[LW16]{Lyt:Wen:16}
Alexander Lytchak and Stefan Wenger.
\newblock Regularity of harmonic discs in spaces with quadratic isoperimetric inequality.
\newblock {\em Calc. Var. Partial Differential Equations}, 55(4):Art. 98, 19, 2016.

\bibitem[LW17a]{Lyt:Wen:17:areamini}
Alexander Lytchak and Stefan Wenger.
\newblock Area minimizing discs in metric spaces.
\newblock {\em Arch. Ration. Mech. Anal.}, 223(3):1123--1182, 2017.

\bibitem[LW17b]{Lyt:Wen:17:energyarea}
Alexander Lytchak and Stefan Wenger.
\newblock Energy and area minimizers in metric spaces.
\newblock {\em Adv. Calc. Var.}, 10(4):407--421, 2017.

\bibitem[LW18a]{Lyt:Wen:18:intrinsic}
Alexander Lytchak and Stefan Wenger.
\newblock Intrinsic structure of minimal discs in metric spaces.
\newblock {\em Geom. Topol.}, 22(1):591--644, 2018.

\bibitem[LW18b]{Lyt:Wen:18:CAT}
Alexander Lytchak and Stefan Wenger.
\newblock Isoperimetric characterization of upper curvature bounds.
\newblock {\em Acta Math.}, 221(1):159--202, 2018.

\bibitem[LW20]{Lyt:Wen:20}
Alexander Lytchak and Stefan Wenger.
\newblock Canonical parameterizations of metric disks.
\newblock {\em Duke Math. J.}, 169(4):761--797, 2020.

\bibitem[LWY20]{Lyt:Wen:You:20}
Alexander Lytchak, Stefan Wenger, and Robert Young.
\newblock Dehn functions and {H}\"older extensions in asymptotic cones.
\newblock {\em J. Reine Angew. Math.}, 763:79--109, 2020.

\bibitem[MW25]{meier-wenger-2025-quasiconformal-almost-parametrizations-of-metric-surfaces}
Damaris Meier and Stefan Wenger.
\newblock Quasiconformal almost parametrizations of metric surfaces.
\newblock {\em J. Eur. Math. Soc. (JEMS)}, 27(12):5133--5154, 2025.

\bibitem[NR22]{Nta:Rom:22}
Dimitrios {Ntalampekos} and Matthew {Romney}.
\newblock {Polyhedral approximation and uniformization for non-length surfaces}.
\newblock {\em arXiv e-prints}, page arXiv:2206.01128, June 2022.
\newblock to appear in J. Eur. Math. Soc. (JEMS).

\bibitem[NR23]{ntalampekos-romney-2023-polyhedral-approximation-of-metric-surfaces-and-applications-to-uniformization}
Dimitrios Ntalampekos and Matthew Romney.
\newblock Polyhedral approximation of metric surfaces and applications to uniformization.
\newblock {\em Duke Math. J.}, 172(9):1673--1734, 2023.

\bibitem[Pan83]{pansu-1983-croissance-des-boules-et-geodesiques-fermees-dans-les-nilvarietes}
Pierre Pansu.
\newblock Croissance des boules et des g\'eod\'esiques ferm\'ees dans les nilvari\'et\'es.
\newblock {\em Ergodic Theory Dynam. Systems}, 3(3):415--445, 1983.

\bibitem[Pet90]{petersen-1990-a-finiteness-theorem-for-metric-spaces}
Peter Petersen, V.
\newblock A finiteness theorem for metric spaces.
\newblock {\em J. Differential Geom.}, 31(2):387--395, 1990.

\bibitem[Raj17]{Raj:17}
Kai Rajala.
\newblock Uniformization of two-dimensional metric surfaces.
\newblock {\em Invent. Math.}, 207(3):1301--1375, 2017.

\bibitem[Res68]{reshetnyak-1968-non-expansive-maps-in-a-space-of-curvature-no-greater-than-K}
Yu.~G. Reshetnyak.
\newblock Non-expansive maps in a space of curvature no greater than {$K$}.
\newblock {\em Sibirsk. Mat. Zh.}, 9:918--927, 1968.

\bibitem[Res97]{Res:97}
Yu.~G. Reshetnyak.
\newblock Sobolev classes of functions with values in a metric space.
\newblock {\em Sibirsk. Mat. Zh.}, 38(3):657--675, iii--iv, 1997.

\bibitem[Sha00]{Sha:00}
Nageswari Shanmugalingam.
\newblock Newtonian spaces: an extension of {S}obolev spaces to metric measure spaces.
\newblock {\em Rev. Mat. Iberoamericana}, 16(2):243--279, 2000.

\bibitem[Sta21]{stadler-the-structure-of-minimal-surfaces-in-cat-0-spaces}
Stephan Stadler.
\newblock The structure of minimal surfaces in {CAT}(0) spaces.
\newblock {\em J. Eur. Math. Soc. (JEMS)}, 23(11):3521--3554, 2021.

\bibitem[Stu06a]{sturm-2006-on-the-geometry-of-metric-measure-spaces-I}
Karl-Theodor Sturm.
\newblock On the geometry of metric measure spaces. {I}.
\newblock {\em Acta Math.}, 196(1):65--131, 2006.

\bibitem[Stu06b]{sturm-2006-on-the-geometry-of-metric-measure-spaces-II}
Karl-Theodor Sturm.
\newblock On the geometry of metric measure spaces. {II}.
\newblock {\em Acta Math.}, 196(1):133--177, 2006.

\bibitem[SW01]{Schoen:Wolfson:01}
R.~Schoen and J.~Wolfson.
\newblock Minimizing area among {L}agrangian surfaces: the mapping problem.
\newblock {\em J. Differential Geom.}, 58(1):1--86, 2001.

\bibitem[SW25]{stadler-wenger-2025-isoperimetric-inequalities-vs-upper-curvature-bounds}
Stephan Stadler and Stefan Wenger.
\newblock Isoperimetric inequalities vs upper curvature bounds.
\newblock {\em Geom. Topol.}, 29(2):829--862, 2025.

\bibitem[vdDW84]{van-der-dries-wilkie-1984-gromovs-theorem-on-groups-of-polynomial-growth-and-elementary-logic}
L.~van~den Dries and A.~J. Wilkie.
\newblock Gromov's theorem on groups of polynomial growth and elementary logic.
\newblock {\em J. Algebra}, 89(2):349--374, 1984.

\bibitem[Vod00]{Vod:00}
Serguei~K. Vodop'yanov.
\newblock {{\(\mathcal P\)}}-differentiability on {Carnot} groups in different topologies and related topics.
\newblock In {\em Trudy po analizu i geometrii}, pages 603--670. Novosibirsk: Izdatel'stvo Instituta Matematiki Im. S. L. Soboleva SO RAN, 2000.

\bibitem[Wen08]{wenger-2008-gromov-hyperbolic-spaces-and-the-sharp-isoperimetric-constant}
Stefan Wenger.
\newblock Gromov hyperbolic spaces and the sharp isoperimetric constant.
\newblock {\em Invent. Math.}, 171(1):227--255, 2008.

\bibitem[Wen11a]{wenger-2011-compactness-for-manifolds-and-integral-currents-with-bounded-diameter-and-volume}
Stefan Wenger.
\newblock Compactness for manifolds and integral currents with bounded diameter and volume.
\newblock {\em Calc. Var. Partial Differential Equations}, 40(3-4):423--448, 2011.

\bibitem[Wen11b]{wenger-2011-nilpotent-groups-without-exactly-polynomial-dehn-function}
Stefan Wenger.
\newblock Nilpotent groups without exactly polynomial {D}ehn function.
\newblock {\em J. Topol.}, 4(1):141--160, 2011.

\bibitem[Wen19]{Wen:2019}
Stefan Wenger.
\newblock Spaces with almost {E}uclidean {D}ehn function.
\newblock {\em Math. Ann.}, 373(3-4):1177--1210, 2019.

\bibitem[You13]{young-2013-the-dehn-function-of-slnz}
Robert Young.
\newblock The {D}ehn function of {${\rm SL}(n;\Bbb Z)$}.
\newblock {\em Ann. of Math. (2)}, 177(3):969--1027, 2013.

\end{thebibliography}

\end{document}